\numberwithin{equation}{section}
\theoremstyle{plain}
\newtheorem*{maintheorem}{Main Theorem}
\newtheorem{theorem}{Theorem}[section]
\newtheorem{proposition}[theorem]{Proposition}
\newtheorem{lemma}[theorem]{Lemma}
\newtheorem{corollary}[theorem]{Corollary}
\theoremstyle{definition}
\newtheorem{remark}[theorem]{Remark}
\newcommand{\ind}{\mathbbm{1}}
\newcommand{\N}{\mathbb{N}}
\newcommand{\R}{\mathbb{R}}
\newcommand{\Z}{\mathbb{Z}}
\newcommand{\C}{\mathbb{C}}
\newcommand{\X}{\mathbb{X}}
\newcommand{\Y}{\mathbb{Y}}
\providecommand{\cl}[1]{\mathcal{#1}}
\newcommand{\ca}{\mathcal{A}}
\newcommand{\cB}{\mathcal{B}}
\newcommand{\cg}{\mathcal{G}}
\newcommand{\cv}{\mathcal{V}}
\newcommand{\cn}{\mathcal{N}}
\newcommand{\ce}{\mathcal{E}}
\newcommand{\cP}{\mathcal{P}}
\newcommand{\tiA}{\tilde A}
\newcommand{\tiB}{\tilde B}
\newcommand{\tiG}{\tilde G}
\newcommand{\tiM}{\tilde M}
\newcommand{\cAex}{\mathcal{A}_{\textnormal{ex}}}
\newcommand{\Aex}{A_{\textnormal{ex}}}
\newcommand{\cBex}{\mathcal{B}_{\textnormal{ex}}}
\newcommand{\Bex}{B_{\textnormal{ex}}}
\newcommand{\lin}{\mathcal{L}}
\renewcommand{\phi}{\varphi}
\renewcommand{\epsilon}{\varepsilon}
\newcommand{\e}{\mathrm{e}}
\newcommand{\dd}{\,\mathrm{d}}
\newcommand{\conj}{\overline}
\newcommand{\sump}{\sideset{}{'}\sum}
\DeclareMathOperator{\re}{Re}
\DeclareMathOperator{\im}{Im}
\DeclareMathOperator{\range}{range}
\DeclarePairedDelimiter{\lrp}{(}{)}
\DeclarePairedDelimiter{\lrb}{[}{]}
\DeclarePairedDelimiter{\lrc}{\{}{\}}
\DeclarePairedDelimiter{\set}{\{}{\}}
\DeclarePairedDelimiter{\abs}{\lvert}{\rvert}
\DeclarePairedDelimiter{\norm}{\lVert}{\rVert}
\DeclarePairedDelimiter{\scl}{\langle}{\rangle}
\DeclarePairedDelimiter{\seq}{(}{)}
\newcommand{\dnorm}{\norm{ \, \cdot \, }}
\newcommand{\sem}[1]{\set{#1}}
\newcommand{\normt}{\@ifstar\@normts\@normt}
\newcommand{\@normts}[1]{%
  \left|\mkern-1.5mu\left|\mkern-1.5mu\left|
   #1
  \right|\mkern-1.5mu\right|\mkern-1.5mu\right|
}
\newcommand{\@normt}[2][]{%
  \mathopen{#1|\mkern-1.5mu#1|\mkern-1.5mu#1|}
  #2
  \mathclose{#1|\mkern-1.5mu#1|\mkern-1.5mu#1|}
}
\renewcommand{\leq}{\leqslant}
\renewcommand{\geq}{\geqslant}
\newcommand{\fa}{\mathfrak{a}}
\newcommand{\fb}{\mathfrak{b}}
\newcommand{\fc}{\mathfrak{c}}
\title {Asymptotic behaviour of fast diffusions on graphs}
\author[A. Gregosiewicz]{Adam Gregosiewicz}
\address{%
  Institute of Mathematics, Polish Academy of Sciences, ul.~Sniadeckich 8,
  00-656 Warsaw, Poland}
\address{%
  Lublin University of Technology,\ ul.~Nadbystrzycka 38A,\ 20-618 Lublin,
  Poland}
\email{a.gregosiewicz@pollub.pl}
\begin{document}

\begin{abstract}
We investigate fast diffusions on finite directed graphs with semipermeable
membranes on vertices.
We prove, in \( L^1 \) and \( L^2 \)-type spaces, that there is a~semigroup of
operators related to the process, and we describe asymptotic behaviour of the
diffusion semigroup as the diffusions' speed increases at the same rate as the
probability of a~particle passing through a~vertex decreases.
In \( L^1 \) case it turns out that the limit process is a~Markov chain on the
vertices of the line graph of the initial graph.
The results are inspired, and in a~way dual to those obtained by A.~Bobrowski in
A.~Ann.~Henri Poincaré (2012) 13: 1501--1510.
\end{abstract}

\maketitle

\section{Introduction}
\label{sec:introduction}

Assume that \( \cg \) is a~directed graph in \( \R^3 \) without loops, and there
is a~Markov process on \( \cg \), which on each edge behaves like Brownian
motion with given variance.
Moreover, assume that each vertex is a~semipermeable membrane with given
permeability coefficients, that is for each vertex there are nonnegative numbers
\( p_{ij} \), describing the probability of a~particle passing through membrane
from the \( i \)-th to the \( j \)-th edge.

In~\cite{Bobrowski-diff} and~\cite{Bobrowski-Morawska} the authors prove that if
the diffusion's speed increases to infinity with the same rate as permeability
coefficients decreases to zero, then we obtain a~limit process which is a~Markov
chain on the vertices of the line graph of \( \cg \), see
Figure~\ref{fig:graph}.
\begin{figure}
\centering
\includegraphics{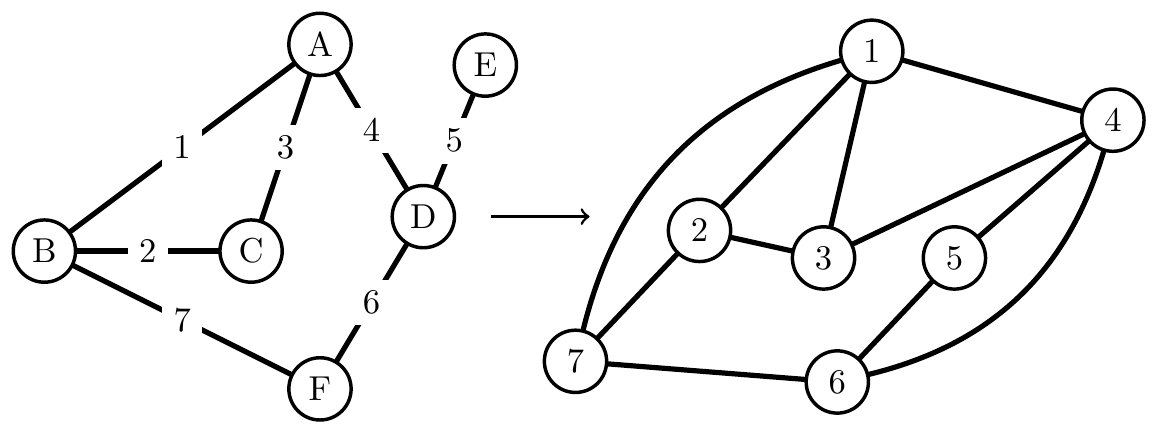}
\caption{Diffussion on graph becomes Markov chain on the vertices of the line
  graph.}
\label{fig:graph}
\end{figure}

The aim of this paper is to prove similar asymptotic result but in a~different
spaces.
In~\cite{Bobrowski-diff,Bobrowski-Morawska} the authors consider the process in
the space of continuous functions on a~graph \( \cg \).
Here we consider \( L^1 \) and \( L^2 \)-type spaces of Lebesgue integrable and
square integrable functions.

The described model is a~special case of an evolution operator acting on
a~graph.
For more such models see~\cite{Mugnolo}.

\subsection{Continuous case}
\label{sec:continuous-case}

As in~\cite{Bobrowski-diff}, let \( \cg = (\cv, \ce) \) be a finite geometric
graph~(see e.g.~\cite[p.~65]{Mugnolo}) without loops, where
\( \cv \subset \R^3 \) is the set of verices, and \( \ce \) is the set of edges
of finite length.
The edges are seen as \( C^1 \) curves connecting vertices.
Let \( N \) be the number of edges and denote
\[
\cn \coloneqq \lrc*{ 1, \dots, N }, \qquad \N \coloneqq \set{ 1,2,\ldots }.
\]
For each \( i \in \cn \), by convention, we call the initial and terminal
vertices of the \( i \)-th edge \( E_i \) its ``left'' and ``right'' endpoints.
We denote them by \( L_i \) and \( R_i \), respectively.
Moreover, for \( i \in \cn \) let \( V_i \) denote vertex \( V \in \cv \) as an
endpoint of the \( i \)-th edge.
If \( V \) is not an endpoint of this edge, we leave \( V_i \) undefined.

Let \( S = \bigcup_{i \in \cn} E_i \) be the disjoint union of the edges.
Notice that there can be many ``copies'' of the same vertex in \( S \), treated
as an endpoint of different edges, since by convention \( V_i \neq V_j \) in
\( S \) for \( i,j \in \cn \), \( i \neq j \).
Then \( S \) is a disconnected compact topological space, and we denote by
\( C(S) \) the space of continuous functions on \( S \) with standard supremum
norm.
We may identify \( f \in C(S) \) with \( (f_i)_{i \in \cn} \), where \( f_i \)
is a member of \( C(E_i) \), the space of continuous functions on the edge
\( E_i \).
The latter space is isometrically isomorphic to the space \( C[0,d_i] \) of
continuous functions defined on the interval \( [0,d_i] \), where \( d_i \) is
the length of the \( i \)-th edge.

Let \( \sigma \in C(S) \) be defined by \( \sigma(p) = \sigma_i \) for
\( i \in \cn \) and \( p \in E_i \), where \( \sigma_i \)'s are given positive
numbers.
Define the operator \( A \) in \( C(S) \) by
\begin{equation}
\label{eq:semigroup-in-C}
Af = \sigma f'',
\end{equation}
on the domain composed of twice continuously differentiable functions,
satisfying the transmission conditions described below.

For each \( i \in \cn \), let \( l_i \) and \( r_i \) be nonnegative real
numbers describing the possibility of passing through the membrane from the
\( i \)-th edge to the edges incident in the left and right endpoints,
respectively.
Also, for \( i,j \in \cn \) such that \( i \neq j \) let \( l_{ij} \) and
\( r_{ij} \) be nonnegative real numbers satisfying
\( \sum_{j \neq i} l_{ij} \leq l_i \) and \( \sum_{j \neq i} r_{ij} \leq r_i \).
The summation here is taken over all \( j \in \cn \) such that \( j \neq i \).
These numbers determine the probability that after filtering through the
membrane from the \( i \)-th edge, a particle will enter the \( j \)-th edge.

By default, if \( E_j \) is not incident with \( L_i \), we put
\( l_{ij} = 0 \).
In particular, by convention \( l_{ij} f(V_j) = 0 \) for \( f \in C(S) \), if
\( V_j \) is not defined.
The same remark concerns \( r_{ij} \).
With these notations, the transmission conditions mentioned above are as
follows: if \( L_i = V\), then
\begin{equation}
\label{eq:transmission-condition-1}
f'(V_i) = l_i f(V_i) - \sum_{j \neq i} l_{ij} f(V_j),
\end{equation}
where \( f'(V_i) \) is the right-hand derivative of \( f \) at \( V_i \), and if
\( R_i = V \), then
\begin{equation}
\label{eq:transmission-condition-2}
-f'(V_i) = r_i f(V_i) - \sum_{j \neq i} r_{ij} f(V_j),
\end{equation}
where \( f'(V_i) \) is the left-hand derivative of \( f \) at \( V_i \).

It is showed in~\cite{Bobrowski-diff} that the operator \( A \) generates a
\emph{Feller semigroup} \( \sem{ \e^{tA} }_{t \geq 0} \) in \( C(S) \).
This means that \( \sem{ \e^{tA} }_{t \geq 0} \) is a~strongly continuous
semigroup of nonnegative contractions, that is for all \( t \geq 0 \) we have
\( \norm{ \e^{tA} }_{\lin(C(S))} \leq 1 \), and \( \e^{tA} f \geq 0 \), provided
that \( f \in C(S) \) is nonnegative.
Here, \( \norm{ \, \cdot \, }_{\lin(C(S))} \) is the operator norm related to
the standard supremum norm in \( C(S) \).
Moreover, the semigroup is \emph{conservative}, that is
\( \e^{tA} \ind_S = \ind_S \), where \( \ind_S \equiv 1 \) on \( S \), if and
only if \( \sum_{j \neq i} l_{ij} = l_i \) and
\( \sum_{j \neq i} r_{ij} = r_i \) for \( i \in \cn \).

Let \( \seq{\kappa_n}_{n \in \N} \) be a~nondecreasing sequence of positive
numbers converging to infinity, and let operators \( A_n \) be defined
by~\eqref{eq:semigroup-in-C} with \( \sigma \) replaced by
\( \kappa_n \sigma \), that is
\[
A_n f = \kappa_n \sigma f'',
\]
with domain \( D(A_n) \) composed of twice continuously differentiable functions
on \( S \) satisfying transmission
conditions~\eqref{eq:transmission-condition-1}
and~\eqref{eq:transmission-condition-2} with permeability coefficients
\textup{(}that is all \( l_i \), \( r_i \), \( l_{ij} \) and
\( r_{ij} \)'s\textup{)} divided by \( \kappa_n \).
The following is proved in~\cite[Theorem~2.2]{Bobrowski-diff}.

\begin{theorem}
\label{thm:convergence-in-C}
For every \( t > 0 \) and \( f \in C(S) \) it follows that
\[
\lim_{n \to +\infty} \e^{tA_n} f = \e^{tQ} Pf
\]
in \( C(S) \), where \( P \) is the projection of \( C(S) \) onto the space
\( C_0(S) \) of functions that are constant on each edge, given by
\( Pf = \lrp[\big]{ d_i^{-1} \int_{E_i} f }_{i \in \cn} \), while \( Q \) is the
operator in \( C_0(S) \) which may be identified with the matrix
\( (q_{ij})_{i,j \in \cn} \) with
\( q_{ij} = \sigma_i d_i^{-1} (l_{ij} + r_{ij}) \) for \( i \neq j \) and
\( q_{ii} = -\sigma_i d_i^{-1}( l_i + r_i ) \).
The convergence is uniform on compact subsets of \( (0, \infty) \).
For \( f \in C_0(S) \), the formula holds also for \( t = 0 \), and the
convergence is uniform on compact subsets of \( [0, \infty) \).
\end{theorem}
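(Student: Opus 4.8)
The plan is to reduce the assertion to a degenerate (singular) version of the Trotter--Kato theorem, the real work being a strong resolvent convergence. Since each \( A_n \) generates a Feller semigroup we have \( \norm{(\lambda - A_n)^{-1}}_{\lin(C(S))} \leq \lambda^{-1} \) for all \( \lambda > 0 \), and analogously \( \norm{(\lambda - Q)^{-1}} \leq \lambda^{-1} \) because \( Q \) is a \textup{(}sub\textup{-)}Markov generator on the finite-dimensional space \( C_0(S) \); equiboundedness of \( \sem{\e^{tA_n}}_{t \geq 0} \) is thus for free, and everything hinges on showing \( (\lambda - A_n)^{-1} f \to (\lambda - Q)^{-1} P f \) strongly as \( n \to \infty \).

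To prove this, fix \( \lambda > 0 \), \( f \in C(S) \) and put \( u = (\lambda - A_n)^{-1} f \), so that on each edge \( E_i \cong [0,d_i] \) one has \( \lambda u_i - \kappa_n \sigma_i u_i'' = f_i \) together with the transmission conditions~\eqref{eq:transmission-condition-1} and~\eqref{eq:transmission-condition-2} with all coefficients divided by \( \kappa_n \). From \( \norm{u_i''}_\infty = (\kappa_n \sigma_i)^{-1} \norm{\lambda u_i - f_i}_\infty \) and from \( \abs{u_i'(L_i)}, \abs{u_i'(R_i)} \leq C_i \kappa_n^{-1} \norm{f}_\infty \) — the latter being a consequence of the transmission conditions and \( \norm{u}_\infty \leq \lambda^{-1} \norm{f}_\infty \) — one gets \( \norm{u_i'}_\infty \to 0 \) (write \( u_i'(x) = u_i'(L_i) + \int u_i'' \)), hence \( u_i \) converges uniformly to a constant. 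To identify the constants, integrate the edge equation over \( E_i \) and use the transmission conditions to replace \( \kappa_n \sigma_i u_i'(L_i) \) and \( \kappa_n \sigma_i u_i'(R_i) \) by bounded expressions in the vertex values of the \( u_j \); letting \( n \to \infty \) the factors \( \kappa_n \) cancel and the limiting constants \( \bar u = (\bar u_i)_{i \in \cn} \) solve exactly the linear system \( (\lambda I - Q) \bar u = P f \), with \( Q \) as in the statement. Since this system has a unique solution, a subsequence argument promotes the convergence to the whole sequence, and as \( u_i \to \bar u_i \) uniformly on each \( E_i \) we conclude \( (\lambda - A_n)^{-1} f \to (\lambda - Q)^{-1} P f \) in \( C(S) \).

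With the resolvent limit established, set \( R(\lambda) \coloneqq (\lambda - Q)^{-1} P \). This is a pseudoresolvent \textup{(}being a strong limit of resolvents\textup{)}, its range is the closed subspace \( C_0(S) \), its kernel is \( \ker P \), and \( \lambda R(\lambda) f \to P f \) as \( \lambda \to \infty \) for every \( f \in C(S) \), so that the projection associated with the pseudoresolvent is exactly \( P \). A degenerate version of the Trotter--Kato theorem for equibounded semigroups then yields, for every \( f \in C(S) \) and \( t > 0 \), \( \e^{tA_n} f \to \e^{tQ} P f \) in \( C(S) \), uniformly on compact subsets of \( (0,\infty) \); on \( C_0(S) = \range P \) this is the ordinary Trotter--Kato convergence for the semigroup \( \sem{\e^{tQ}}_{t \geq 0} \) on the finite-dimensional space \( C_0(S) \), so there it holds also for \( t = 0 \), uniformly on compact subsets of \( [0,\infty) \).

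The main obstacle is the resolvent step: one must argue cleanly that \( u \) flattens on each edge at rate \( O(\kappa_n^{-1}) \) — which genuinely uses both the differential equation and the boundary conditions — and then keep careful track of the orientation of the edges and of which copy of a vertex every term refers to, so that the boundary contributions assemble into precisely \( q_{ij} = \sigma_i d_i^{-1}(l_{ij} + r_{ij}) \) for \( i \neq j \) and \( q_{ii} = -\sigma_i d_i^{-1}(l_i + r_i) \). The abstract part is comparatively routine, the only points needing a line of checking being that \( \overline{\range R(\lambda)} = C_0(S) \), that \( \ker R(\lambda) = \ker P \), and that \( \lambda R(\lambda) \) converges strongly to \( P \).
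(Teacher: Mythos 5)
Your resolvent computation is sound: the bounds \( \norm{u_i''}_\infty = O(\kappa_n^{-1}) \) and \( \abs{u_i'(L_i)}, \abs{u_i'(R_i)} = O(\kappa_n^{-1}) \) do force \( u^{(n)} = (\lambda - A_n)^{-1}f \) to flatten on each edge, and integrating the edge equation and substituting the rescaled transmission conditions correctly produces the system \( (\lambda - Q)\bar u = Pf \) with the matrix of the statement; since \( Q \) is a sub-Markov intensity matrix, \( \lambda - Q \) is invertible and your subsequence argument closes that step. (Note that the paper does not prove this theorem itself: it quotes it from Bobrowski's 2012 article, whose proof proceeds via an isomorphism reducing the transmission conditions to homogeneous Neumann conditions and a singular perturbation theorem, not via direct resolvent asymptotics.)

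The gap is in your final step. Strong convergence of the resolvents of equibounded semigroups to a degenerate pseudoresolvent \( R(\lambda) = (\lambda - Q)^{-1}P \) does \emph{not} imply convergence of the semigroups for \( t > 0 \) on all of \( C(S) \); the Sova--Kurtz (degenerate Trotter--Kato) theorem only yields \( \e^{tA_n} f \to \e^{tQ} f \) for \( f \) in the closure of the range of \( R(\lambda) \), that is for \( f \in C_0(S) \). A counterexample to the general principle you invoke: \( A_n = n \, \frac{d}{dx} \) on \( C(\mathbb{T}) \) generates Markov (rotation) semigroups with \( (\lambda - A_n)^{-1} \to \lambda^{-1} P \), \( P \) the mean-value projection, yet \( \e^{tA_n} f \) fails to converge for \( t > 0 \) unless \( f \) is constant. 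What rescues the present situation is the behaviour on the fast time scale: \( \kappa_n^{-1} A_n \) converges, in the extended-limit sense, to the Neumann Laplacian \( B \), whose semigroup is strongly ergodic, \( \e^{tB} \to P \) as \( t \to \infty \). This is precisely the additional hypothesis in Kurtz's singular perturbation theorem (condition \ref{item:Kurtz2} of Theorem~\ref{thm:Kurtz}), which the paper uses for its \( L^1 \) analogue; alternatively, one could prove sectorial resolvent estimates for \( A_n \) uniform in \( n \) and recover \( \e^{tA_n} \) from a contour integral of the resolvent. As written, your argument establishes the theorem only for \( f \in C_0(S) \).
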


The aim of this paper is to prove ``dual'' version of
Theorem~\ref{thm:convergence-in-C}.
Loosely speaking, the main result is as follows (see
Theorems~\ref{thm:dual-is-Markov},
\ref{thm:convergence-of-dual},~\ref{thm:An-gen}, and~\ref{thm:agn-asymptotics}
for precise formulation).

\begin{maintheorem}
For each \( n \in \N \) the part \( A_n^{*} \) of the adjoint operator of
\( A_n \) in the space of Lebesgue integrable \textup{(}or square
integrable\textup{)} functions on \( S \), generates a~strongly continuous
semigroup.
Moreover, the semigroups generated by \( A_n^{*} \)'s converge strongly to
\( \e^{tQ} P \) as \( n \) goes to infinity, for the projection \( P \) given by
the same formula as in \( C(S) \) and some ``matrix'' \( Q \).
\end{maintheorem}

We give an explicit formula for \( Q \), and it is slightly different from
\( Q \) of Theorem~\ref{thm:convergence-in-C}.

One may wish to mimic the proof of the continuous case but this is not fully
possible.
In particular, in the space of continuous functions on \( S \) there exists an
isomorphism transforming boundary conditions associated with the original
process to much simpler homogeneous Neumann boundary conditions.
Because of that, we can obtain limit for the isomorphic semigroups which leads
to required asymptotics.
What is crucial, in the Lebesgue-type space of integrable or square integrable
functions such isomorphism does not exists.
However, there is an isomorphism of the Sobolev space \( W^{2,1} \) or
\( W^{2,2} \) in a~way similar to the isomorphism in the space of continuous
functions.
This leads to a~different approach via Kurtz's convergence
theorem~\cite[Theorem~1.7.6]{MR838085} in \( L^1 \)-type space or, in
\( L^2 \)-type space, Ouhabaz's monotone convergence theorem for sesquilinear
forms~\cite[Theorem~5]{ouhabaz95}, which generalizes Simon's
theorem~\cite[Theorem~3.1]{simon78}.

For generation results in \( C \), \( L^1 \) or \( W^{1,1} \)-type space,
concerning a~diffusion operator with generalized transmission conditions see
also~\cite{banasiak2016}.

\section{Analysis in \texorpdfstring{\( L^1(S) \)}{L1(S)}}
\label{sec:analysis--l1}

We consider a~model that is in a~way dual to that described in
Section~\ref{sec:continuous-case} by investigating the restriction of the
adjoint of \( A_n \) to \( L^1 \)-type space.

In order to set up notations, for an interval \( I \subset \R \) equipped with
the Lebesgue measure, let \( L^1(I) \) be the real space of (equivalence classes
of) Lebesgue integrable real functions defined on \( I \).
By \( \dnorm_{L^1(I)} \) we denote the standard norm
\[
\norm{ \phi }_{L^1(I)} \coloneqq \int_I \abs{ \phi(t) } \dd t = \int_I \abs{
  \phi }, \qquad \phi \in L^1(I).
\]
Moreover, let \( W^{2,1}(S) \) be the Sobolev space of (equivalence classes of)
functions \( \phi \in L^1(I) \) such that \( \phi \) and \( \phi' \) are weakly
differentiable with \( \phi', \phi'' \in L^1(I) \).
The space \( W^{2,1}(I) \) equipped with the norm
\[
\norm{ \phi }_{W^{2,1}(I)} \coloneqq \norm{ \phi }_{L^1(I)} + \norm{ \phi''
}_{L^1(I)}, \qquad \phi \in W^{2,1}(I)
\]
is a~Banach space.
Moreover, if \( (\X,\dnorm_{\X}) \) is a~Banach space, then by
\( \dnorm_{\lin(\X)} \) we denote the operator norm in \( \X \).


\subsection{Adjoint of the operator \texorpdfstring{\( A_n \)}{An}}
\label{sec:adjoint-semigroup}

Using the same identification as in Section~\ref{sec:continuous-case}, we
consider the space
\[
L^1(S) \coloneqq \lrc{\phi\colon \phi = (\phi_i)_{i \in \cn}, \ \phi_i \in
  L^1(E_i)}.
\]
Here \( L^1(E_i) \) is the space of (equivalence classes of) Lebesgue integrable
functions on \( E_i \), identified with \( L^1(0,d_i) \).
More precisely, if \( \ell_i(t) \) is the unique point on the edge \( E_i \),
whose distance from \( L_i \) (along the edge) is \( t \in (0,d_i) \), then the
function \( \phi \in L^1(E_i) \) is identified with
\( \phi \circ \ell_i \in L^1(0,d_i) \).
Such identification is an isometric isomorphism.
In particular, for \( \phi = (\phi_i)_{i \in \cn} \in L^1(S) \), we have
\( \int_{E_i} \phi_i = \int_0^{d_i} \phi_i \circ \ell_i \) and
\( \int_S \phi = \sum_{i \in \cn} \int_{E_i} \phi_i \).
We introduce the norm \( \norm{ \, \cdot \, }_{L^1(S)} \) in \( L^1(S) \) by
\[
\norm{ \phi }_{L^1(S)} \coloneqq \sum_{i \in \cn} \norm{ \phi_i \circ \ell_i
}_{L^1(0,d_i)}, \qquad \phi = (\phi_i)_{i \in \cn} \in L^1(S).
\]
Let \( W^{2,1}(S) \) be the Sobolev-type space on \( S \), that is the subspace
of \( L^1(S) \) composed of (equivalence classes of) functions
\( \phi \in L^1(S) \) such that \( \phi \) and \( \phi' \) are weakly
differentiable and \( \phi', \phi'' \in L^1(S) \).

Let \( \sigma \), \( \seq{\kappa_n}_{n \in \N} \), and all \( l_i \), \( r_i \),
\( l_{ij} \), \( r_{ij} \)'s be as in Section~\ref{sec:continuous-case}.
For each \( n \in \N \) we define the operator \( A_n^{*} \) in \( L^1(S) \) by
\begin{equation}
\label{eq:A-dual}
A_n^{*} \phi = \kappa_n \sigma \phi''
\end{equation}
with domain \( D(A_n^{*}) \) composed of members of \( W^{2,1}(S) \) satisfying
the transmission conditions
\begin{align}
\kappa_n \sigma_i \phi'(L_i) &= \sigma_i l_i \phi(L_i) - \sump_{j \in I_i^L}
\lrb[\big]{ \sigma_j l_{ji} \phi(L_j) + \sigma_j r_{ji} \phi(R_j) },\label{eq:transition-conditions-dual1}\\
\kappa_n \sigma_i \phi'(R_i) &= \sump_{j \in I_i^R} \lrb[\big]{ \sigma_j l_{ji}
  \phi(L_j) + \sigma_j r_{ji} \phi(R_j) } - \sigma_i r_i \phi(R_i),\label{eq:transition-conditions-dual2}
\end{align}
for all \( i \in \cl N \).
Here, \( I_i^L \) and \( I_i^R \) are the sets of indexes \( j \neq i \) of
edges incident in \( L_i \) and \( R_i \), respectively.
The prime in the sums denotes the fact that, since there are no loops, at most
one of the terms \( \sigma_j l_{ji} \phi(L_j) \) and
\( \sigma_j r_{ji} \phi(R_j) \) is taken into account.
Denoting the right-hand sides of~\eqref{eq:transition-conditions-dual1}
and~\eqref{eq:transition-conditions-dual2} by, respectively,
\( \sigma_i F_{L,i} \phi \) and \( \sigma_i F_{R,i} \phi \), we may rewrite
these conditions in the form
\begin{equation}
\label{eq:transmission-cond-F}
\kappa_n \phi'(L_i) = F_{L,i} \phi, \qquad \kappa_n \phi'(R_i) = F_{R,i} \phi,
\qquad i \in \cn,
\end{equation}
and consider \( F_{L,i} \), \( F_{R,i} \) as linear functionals in
\( W^{2,1}(S) \).

Keeping in mind the Riesz representation theorem, the following lemma shows that
the operator \( A_n^{*} \) is in a way adjoint to \( A_n \) defined in
Section~\ref{sec:continuous-case}.
More precisely, \( A_n^{*} \) is the \emph{part} (see~\cite[p.~60]{MR1721989})
of the adjoint of \( A_n \) in \( L^1(S) \).

\begin{lemma}
\label{lem:dual-semigroup}
Let \( n \in \N \).
If \( f \in D(A_n) \) and \( \phi \in D(A_n^{*}) \), then
\begin{equation}
\label{eq:dual-semigroup}
\int_S \phi A_n f = \int_S (A_n^{*} \phi) f.
\end{equation}
\end{lemma}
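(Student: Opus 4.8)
The plan is to prove the identity by a direct computation: expand both sides edge-by-edge, integrate by parts twice on each edge, and check that all the boundary terms cancel thanks to the transmission conditions defining $D(A_n)$ and $D(A_n^*)$. Since $\int_S \psi = \sum_{i \in \cn} \int_{E_i} \psi_i$ and both $A_n f = \kappa_n \sigma f''$ and $A_n^* \phi = \kappa_n \sigma \phi''$ act componentwise (with $\sigma$ constant equal to $\sigma_i$ on $E_i$), the claim reduces to showing
\[
\sum_{i \in \cn} \kappa_n \sigma_i \int_0^{d_i} \bigl( \phi_i f_i'' - \phi_i'' f_i \bigr) = 0,
\]
where I identify each edge with $[0,d_i]$ via $\ell_i$. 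On each interval, integration by parts gives $\int_0^{d_i}(\phi_i f_i'' - \phi_i'' f_i) = [\phi_i f_i' - \phi_i' f_i]_0^{d_i}$, so the whole expression equals a sum of boundary contributions at the left and right endpoints of every edge.

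The key step is to reorganize this sum of endpoint terms by grouping the contributions that involve the \emph{same} geometric vertex $V \in \cv$. For a fixed vertex $V$, each edge $E_i$ incident to $V$ contributes either at $L_i$ (if $V = L_i$) or at $R_i$ (if $V = R_i$), with a sign coming from the orientation (the $0$ versus $d_i$ endpoint). Now I substitute the boundary conditions. For $f \in D(A_n)$, conditions \eqref{eq:transmission-condition-1} and \eqref{eq:transmission-condition-2} express $f'(V_i)$ (with the appropriate one-sided derivative, i.e.\ $+$ at $L_i$ and $-$ at $R_i$) in terms of the \emph{values} $f(V_j)$ at the same vertex: $f'(V_i) = l_i f(V_i) - \sum_{j} l_{ij} f(V_j)$ when $V = L_i$, and $-f'(V_i) = r_i f(V_i) - \sum_j r_{ij} f(V_j)$ when $V = R_i$. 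For $\phi \in D(A_n^*)$, conditions \eqref{eq:transition-conditions-dual1}--\eqref{eq:transition-conditions-dual2}, i.e.\ $\kappa_n \phi'(L_i) = F_{L,i}\phi$ and $\kappa_n \phi'(R_i) = F_{R,i}\phi$, do the analogous thing for $\phi$, but with the coefficients transposed (note the $l_{ji}$, $r_{ji}$ rather than $l_{ij}$, $r_{ij}$, and the factors $\sigma_j$). Plugging both in, each vertex block of $\sum_i \kappa_n \sigma_i [\phi_i f_i' - \phi_i' f_i]$ becomes a finite bilinear expression in the numbers $\{f(V_i)\}$ and $\{\phi(V_i)\}$ over edges $i$ incident to $V$, of the form $\sum_{i} \sigma_i\bigl(l_i \text{ or } r_i\bigr) \phi(V_i) f(V_i) - \sum_{i}\sum_{j} \sigma_j (l_{ji}\text{ or }r_{ji}) \phi(V_j) f(V_i) - \sum_i \bigl(\text{the $F_{\cdot,i}\phi$ term}\bigr) f(V_i)$; after expanding $F_{\cdot,i}\phi$ and swapping the summation indices $i \leftrightarrow j$ in the double sum, the ``diagonal'' terms $\sigma_i l_i \phi(V_i) f(V_i)$ (resp.\ with $r_i$) cancel against the corresponding pieces of $F$, and the ``off-diagonal'' double sums cancel by the symmetry $\sigma_j l_{ji}\,\phi(V_j) f(V_i) \leftrightarrow \sigma_i l_{ij}\,\phi(V_i) f(V_j)$, etc. Hence every vertex block vanishes, and so does the total.

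The main obstacle is purely bookkeeping: keeping track of signs and of which one-sided derivative and which endpoint ($0$ or $d_i$) is meant at each occurrence, and correctly matching the ``primed sum'' convention in \eqref{eq:transition-conditions-dual1}--\eqref{eq:transition-conditions-dual2} (at most one of $\sigma_j l_{ji}\phi(L_j)$, $\sigma_j r_{ji}\phi(R_j)$ is counted) with the default conventions $l_{ij} = 0$ when $E_j$ is not incident to $L_i$ and $l_{ij} f(V_j) = 0$ when $V_j$ is undefined. The cleanest way to handle this is to introduce, once and for all, a single symbol for the ``inward'' one-sided derivative and the associated sign $\epsilon_{i,V} \in \{+1,-1\}$ at a vertex $V \in \{L_i, R_i\}$, rewrite both \eqref{eq:transmission-condition-1}--\eqref{eq:transmission-condition-2} and \eqref{eq:transition-conditions-dual1}--\eqref{eq:transition-conditions-dual2} in this uniform notation, and then the cancellation is a one-line index swap. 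I would also remark that this computation is the integrable-function analogue of the classical Green's formula, the asymmetry between the roles of $f$ and $\phi$ (coefficients $l_{ij}$ versus $\sigma_j l_{ji}/\sigma_i$) being exactly what makes $A_n^*$ the part of the Banach-space adjoint of $A_n$ rather than a formally self-adjoint operator.
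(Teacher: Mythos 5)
Your proposal is correct and follows essentially the same route as the paper: integrate by parts on each edge to reduce the identity to the vanishing of the boundary sum $\sum_i \kappa_n\sigma_i[\phi f'-\phi'f]_{L_i}^{R_i}$, substitute both sets of transmission conditions, and cancel the diagonal and off-diagonal terms after swapping the summation indices (your grouping by geometric vertex is just a local repackaging of the paper's global change of order of summation). One bookkeeping point to fix when writing it out: the conditions defining $D(A_n)$ have the permeability coefficients divided by $\kappa_n$, i.e.\ $f'(V_i)=\kappa_n^{-1}\bigl(l_i f(V_i)-\sum_{j\neq i}l_{ij}f(V_j)\bigr)$, and it is exactly this factor $\kappa_n^{-1}$ that cancels the $\kappa_n$ coming from $A_nf=\kappa_n\sigma f''$ so that the $f$-side terms match the $\phi$-side terms $\kappa_n\phi'(V_i)=F_{\cdot,i}\phi$.
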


\begin{proof}
Integrating by parts we obtain
\begin{equation*}
\begin{split}
\int_{E_j} \phi f'' &= \phi(R_j) f'(R_j) - \phi'(R_j) f(R_j) + \phi'(L_j) f(L_j)
- \phi(L_j) f'(L_j) + \int_{E_j} \phi'' f
\end{split}
\end{equation*}
for every \( j \in \cl N \).
Hence, equality~\eqref{eq:dual-semigroup} holds if and only if
\begin{equation}
\label{eq:transmission-conditions-on-edge}
\sum_{j \in \cn} \sigma_j \lrb[\big]{\phi(R_j) f'(R_j) - \phi'(R_j) f(R_j) + \phi'(L_j)
  f(L_j) - \phi(L_j) f'(L_j)} = 0.
\end{equation}
Since \( f \) belongs to \( D(A_n) \), transmission
conditions~\eqref{eq:transmission-condition-1}
and~\eqref{eq:transmission-condition-2}, with left-hand sides multiplied by
\( \kappa_n \), are satisfied.
Thus~\eqref{eq:transmission-conditions-on-edge} holds if and only if
\begin{align*}
  &\kappa_n^{-1} \sum_{j \in \cl N} \sigma_j \phi(R_j) \lrb[\bigg]{ \sum_{i \neq j} r_{ji}
    f(R_{ji}) - r_j f(R_j) } - \sum_{j \in \cl N} \sigma_j \phi'(R_j) f(R_j)\\
  + &\kappa_n^{-1} \sum_{j \in \cl N} \sigma_j \phi(L_j) \lrb[\bigg]{ \sum_{i \neq j} l_{ji}
      f(L_{ji}) - l_j f(L_j) } + \sum_{j \in \cl N} \sigma_j \phi'(L_j) f(L_j) = 0,
\end{align*}
where \( L_{ji} \) and \( R_{ji} \) are, by definition, respectively left and
right ends of \( E_j \), seen as members of \( E_i \).
Changing the order of summation, the last equality becomes
\begin{align*}
  &\kappa_n^{-1} \sum_{i \in \cl N} \sum_{j \neq i} \sigma_j r_{ji} \phi(R_j) f(R_{ji}) - \sum_{i
    \in \cl N} \sigma_i f(R_i) \lrb[\big]{\kappa_n^{-1} r_i \phi(R_i) + \phi'(R_i)}\\
  + &\kappa_n^{-1} \sum_{i \in \cl N} \sum_{j \neq i} \sigma_j l_{ji} \phi(L_j) f(L_{ji}) -
      \sum_{i \in \cl N} \sigma_i f(L_i) \lrb[\big]{ \kappa_n^{-1} l_i \phi(L_i) - \phi'(L_i) } = 0.
\end{align*}
Notice that \( L_{ji} \) is either \( L_i \) or \( R_i \), or is left undefined,
and the same holds for \( R_{ji} \). Thus we can rewrite the last condition in
the form
\[
\sum_{i \in \cl N} \sigma_i f(R_i) \lrb[\big]{ \kappa_n^{-1} F_{R,i} \phi -
  \phi'(R_i) } + \sum_{i \in \cl N} \sigma_i f(L_i) \lrb[\big]{ \phi'(L_i) -
  \kappa_n^{-1} F_{L,i} \phi } = 0,
\]
which is true, since \( \phi \) satisfies the transition
conditions~\eqref{eq:transmission-cond-F}.
\end{proof}

\subsection{One-dimensional Laplacian in
  \texorpdfstring{\( L^1(a,b) \)}{L1(a,b)} and
  \texorpdfstring{\( W^{2,1}(a,b) \)}{W21(a,b)}}
\label{sec:sect-lap}

Let \( a,b \in \R \) be such that \( a < b \), and consider the one-dimensional
Laplacian \( G \) in \( L^1(a,b) \) with homogeneous Neumann boundary
conditions, that is
\[
G\phi \coloneqq \phi''
\]
with the domain \( D(G) \) composed of functions \( \phi \in W^{2,1}(a,b) \)
satisfying
\[
\phi'(a) = \phi'(b) = 0,
\]
It is easy to check that \( D(G) \) is dense in \( L^1(a,b) \).
Moreover, standard calculations show, see e.g.~\cite[Proposition~2.1.2 and
Exercise~2.1.3.4]{lorenzi-lunardi-isem}, that the resolvent set of \( G \)
contains the interval \( (0,+\infty) \), and there exists \( M > 0 \) such that
\begin{equation}
\label{eq:laplacian-res-est}
\norm{ \lambda(\lambda - G)^{-1} }_{\lin(L^1(a,b))} \leq M
\end{equation}
for every \( \lambda > 0 \).
Consequently, by the Hill-Yosida theorem, the operator \( G \) generates
a~strongly continuous semigroup of contractions in \( L^1(a,b) \).

In the following two propositions we also need an explicit formula for the
resolvent of \( G \).
For \( \nu \in \R \) let \( \e_{\nu} \) be the function defined by
\( \e_{\nu}(x) \coloneqq \e^{-\nu x} \) for \( x \in \R \).
Fix \( \phi \in L^1(a,b) \) and \( \lambda > 0 \).
The function \( \psi_{\lambda} \coloneqq (\lambda - G)^{-1} \phi \in D(G) \)
satisfies the resolvent equation
\[
\lambda \psi_{\lambda} - \psi_{\lambda}'' = \phi.
\]
Hence, letting \( \mu \coloneqq \sqrt{\lambda} \), we may write
\( \psi_{\lambda} \) in the form
\begin{equation}
\label{eq:sol-resol}
\psi_{\lambda} = J_{\mu} + c_{\mu} \e_{-\mu} + d_{\mu} \e_{\mu},
\end{equation}
where
\[
J_{\mu}(x) \coloneqq \frac{1}{2\mu} \int_a^b \e_{\mu}(\abs{ x - y }) \phi(y) \dd
y, \qquad x \in (a,b),
\]
and \( c_{\mu} \), \( d_{\mu} \), depending merely on \( \mu \) and \( \phi \),
are chosen so that \( \psi'(a) = \psi'(b) = 0 \).
Precisely,
\begin{equation}
\label{eq:sol-constants}
c_{\mu} = \frac{\xi_{\mu} \e^{-\mu b} + \zeta_{\mu} \e^{-\mu
    a}}{ \mu \lrp[\big]{\e^{\mu(b-a)} - \e^{-\mu(b-a)}} },
\qquad d_{\mu} = \frac{ \xi_{\mu} \e^{\mu b} + \zeta_{\mu} \e^{\mu a} }{
  \mu \lrp[\big]{\e^{\mu(b-a)} - \e^{-\mu(b-a)}} },
\end{equation}
for
\[
\xi_\mu \coloneqq \frac{1}{2} \int_a^b \e_{\mu}(y-a) \phi(y) \dd y, \qquad
\zeta_{\mu} \coloneqq \frac{1}{2} \int_a^b \e_{\mu}(b-y) \phi(y) \dd y.
\]

\begin{proposition}
\label{prop:convergence-res}
For every \( \phi \in L^1(a,b) \) we have
\[
\lim_{\lambda \to 0^+} \lambda(\lambda - G)^{-1} \phi = \frac{1}{b-a} \int_a^b
\phi,
\]
in \( L^1(a,b) \), where \( \int_a^b \phi \) is identified with the constant
function on \( (a,b) \).
\end{proposition}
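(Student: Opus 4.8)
The plan is to argue directly from the explicit resolvent formula~\eqref{eq:sol-resol}. Put \( \mu = \sqrt{\lambda} \), so that \( \mu \to 0^+ \) precisely when \( \lambda \to 0^+ \), and write
\[
\lambda (\lambda - G)^{-1}\phi = \mu^2 \psi_\lambda = \mu^2 J_\mu + \mu^2 c_\mu \e_{-\mu} + \mu^2 d_\mu \e_\mu .
\]
First I would dispose of the term \( \mu^2 J_\mu \), which is negligible: from the defining integral one has \( \abs{ \mu^2 J_\mu(x) } \leq \tfrac{\mu}{2} \int_a^b \abs{\phi} = \tfrac{\mu}{2}\norm{\phi}_{L^1(a,b)} \) for every \( x \in (a,b) \), hence \( \norm{ \mu^2 J_\mu }_{L^1(a,b)} \leq \tfrac{(b-a)\mu}{2}\norm{\phi}_{L^1(a,b)} \to 0 \).

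It then remains to compute the limits of the scalars \( \mu^2 c_\mu \) and \( \mu^2 d_\mu \). By dominated convergence (the integrands are dominated by \( \abs{\phi} \), uniformly for \( \mu \in (0,1] \)), both \( \xi_\mu \) and \( \zeta_\mu \) converge to \( \tfrac12 \int_a^b \phi \) as \( \mu \to 0^+ \), so the numerators of \( c_\mu \) and \( d_\mu \) in~\eqref{eq:sol-constants} both converge to \( \int_a^b \phi \). For the common denominator I would use \( \mu\lrp*{ \e^{\mu(b-a)} - \e^{-\mu(b-a)} } = 2\mu\sinh\lrp*{\mu(b-a)} \), whose quotient by \( \mu^2 \) tends to \( 2(b-a) \). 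Consequently \( \mu^2 c_\mu \to \frac{1}{2(b-a)}\int_a^b\phi \) and \( \mu^2 d_\mu \to \frac{1}{2(b-a)}\int_a^b\phi \). Since moreover \( \e_{-\mu} \to \ind \) and \( \e_\mu \to \ind \) uniformly on \( [a,b] \) (hence in \( L^1(a,b) \)), we conclude that \( \mu^2 c_\mu \e_{-\mu} + \mu^2 d_\mu \e_\mu \to \frac{1}{b-a}\int_a^b\phi \) in \( L^1(a,b) \); combining this with the estimate on \( \mu^2 J_\mu \) proves the proposition.

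The computation is routine; the one point that needs attention is that \( c_\mu \) and \( d_\mu \) separately blow up like \( \mu^{-2} \) as \( \mu \to 0^+ \), so the exponential factors \( \e_{\pm\mu} \) cannot be dealt with in isolation — one must track the exact cancellation between the \( \mu^2 \) prefactor and the denominator \( 2\mu\sinh(\mu(b-a)) \), which vanishes at order \( \mu^2 \). A more conceptual alternative would be to verify the convergence first for \( \phi \) ranging over a dense set (e.g.\ \( \phi \in C[a,b] \), where the limits above are immediate) and then pass to arbitrary \( \phi \in L^1(a,b) \) by invoking the uniform bound~\eqref{eq:laplacian-res-est} together with the boundedness of the functional \( \phi \mapsto \frac{1}{b-a}\int_a^b \phi \); but since the direct estimates already cover every \( \phi \in L^1(a,b) \), this detour is unnecessary.
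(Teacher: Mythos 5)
Your proposal is correct and follows essentially the same route as the paper: the same decomposition \( \mu^2\psi_\lambda = \mu^2 J_\mu + \mu^2 c_\mu \e_{-\mu} + \mu^2 d_\mu \e_\mu \), the same \( O(\mu) \) estimate killing the \( J_\mu \) term, and the same dominated-convergence computation showing \( \mu^2 c_\mu, \mu^2 d_\mu \to \tfrac{1}{2(b-a)}\int_a^b\phi \). The only cosmetic difference is that you finish with uniform convergence of \( \e_{\pm\mu} \) to \( \ind \) where the paper invokes dominated convergence a second time; both are fine.
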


\begin{proof}
For fixed \( \phi \in L^1(a,b) \) and \( \lambda > 0 \), let
\( \psi_{\lambda} \coloneqq (\lambda - G)^{-1} \phi \) be given
by~\eqref{eq:sol-resol}.
Observe that \( \norm{ J_{\mu} }_{L^1(a,b)} \leq (2\mu)^{-1} \norm{ \phi
}_{L^1(a,b)} \).
Consequently, letting \( C \coloneqq (b-a)^{-1} \int_a^b \phi \),
\[
\norm{ \lambda \psi_{\lambda} - C }_{L^1(a,b)} \leq 2^{-1}\mu \norm{ \phi
}_{L^1(a,b)} + \norm{ \mu^2 c_{\mu} \e_{-\mu} + \mu^2 d_{\mu} \e_{\mu} - C
}_{L^1(a,b)}.
\]
Since
\[
\lim_{\mu \to 0^+} \mu \lrp[\big]{\e^{\mu(b-a)} - \e^{-\mu(b-a)}}^{-1} =
[2(b-a)]^{-1},
\]
it follows that \( \mu^2 c_{\mu} \) and \( \mu^2 d_{\mu} \) both converge to
\( C/2 \) as \( \mu \to 0^+ \), by the Lebesgue dominated convergence theorem.
Using the Lebesgue dominated convergence theorem again, we see that
\[
\lim_{\mu \to 0^+} \norm{ \mu^2 c_{\mu} \e_{-\mu} + \mu^2 d_{\mu} \e_{\mu} - C
}_{L^1(a,b)} = 0.
\]
Therefore
\[
\lim_{\lambda \to 0^+} \norm{ \lambda \psi_{\lambda} - C }_{L^1(a,b)} = 0,
\]
which completes the proof.
\end{proof}
%

Let \( \tiG \) be the part of \( G \) in \( W^{2,1}(a,b) \), that is
\[
\tiG \phi \coloneqq G\phi
\]
with domain
\[
D(\tiG) \coloneqq \set{\phi \in D(G) \cap W^{2,1}(a,b)\colon G\phi \in
  W^{2,1}(a,b)}.
\]

\begin{proposition}
\label{prop:part-res-estimate}
The resolvent set of \( \tiG \) contains the interval \( (0,+\infty) \), and
there exists \( \tilde M > 0 \) such that
\[
\norm{ \lambda(\lambda - \tiG)^{-1} }_{\lin(W^{2,1}(a,b))} \leq \tilde M
\]
for every \( \lambda > 0 \).
\end{proposition}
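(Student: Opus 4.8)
The plan is to show that for each $\lambda>0$ the operator $\lambda-\tiG$ maps $D(\tiG)$ bijectively onto $W^{2,1}(a,b)$, that its inverse is the restriction of $(\lambda-G)^{-1}$ to $W^{2,1}(a,b)$, and then to estimate this restriction in the $W^{2,1}$-norm. For the first part, fix $\phi\in W^{2,1}(a,b)$ and set $u\coloneqq(\lambda-G)^{-1}\phi$. Then $u\in D(G)\subset W^{2,1}(a,b)$, and $u''=\lambda u-\phi$ is a difference of two members of $W^{2,1}(a,b)$, hence $u''\in W^{2,1}(a,b)$; therefore $u\in D(\tiG)$ and $(\lambda-\tiG)u=\phi$. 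Injectivity of $\lambda-\tiG$ is inherited from that of $\lambda-G$, and, being the part in $W^{2,1}(a,b)$ of the closed operator $G$, the operator $\tiG$ is closed; so $(0,+\infty)\subset\rho(\tiG)$ and $(\lambda-\tiG)^{-1}\phi=(\lambda-G)^{-1}\phi$ for $\phi\in W^{2,1}(a,b)$. The $L^1$-part of the desired bound is then immediate from~\eqref{eq:laplacian-res-est}: $\norm{\lambda u}_{L^1(a,b)}\leq M\norm{\phi}_{L^1(a,b)}\leq M\norm{\phi}_{W^{2,1}(a,b)}$.

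It remains to bound $\norm{\lambda u''}_{L^1(a,b)}$ uniformly in $\lambda$ and $\phi$. Since $u\in D(\tiG)$ we have $w\coloneqq u''\in W^{2,1}(a,b)$; differentiating $u''=\lambda u-\phi$ twice and using $u'(a)=u'(b)=0$ shows that $w$ solves $\lambda w-w''=\phi''$ with $w'(a)=-\phi'(a)$ and $w'(b)=-\phi'(b)$. I would split $w=w_0+z$, where $w_0\coloneqq(\lambda-G)^{-1}\phi''$ solves the same equation with homogeneous Neumann conditions, and $z$ is the unique solution of $\lambda z-z''=0$ with $z'(a)=-\phi'(a)$, $z'(b)=-\phi'(b)$, which (with $\mu\coloneqq\sqrt{\lambda}$) is
\[
z(x)=\frac{\phi'(a)\cosh\lrp[\big]{\mu(b-x)}-\phi'(b)\cosh\lrp[\big]{\mu(x-a)}}{\mu\sinh\lrp[\big]{\mu(b-a)}}.
\]
By~\eqref{eq:laplacian-res-est} one has $\norm{\lambda w_0}_{L^1(a,b)}\leq M\norm{\phi''}_{L^1(a,b)}$, whereas, since $\lambda z=\mu^2z$ and $\int_a^b\cosh\lrp[\big]{\mu(b-x)}\dd x=\int_a^b\cosh\lrp[\big]{\mu(x-a)}\dd x=\mu^{-1}\sinh\lrp[\big]{\mu(b-a)}$, one gets $\norm{\lambda z}_{L^1(a,b)}\leq\abs{\phi'(a)}+\abs{\phi'(b)}$. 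I expect this last estimate to be the only delicate point: expanding the hyperbolic cosines into exponentials and bounding each term of $z$ separately produces a factor growing exponentially in $\mu$, and it is precisely the balanced form displayed above that keeps $\norm{\lambda z}_{L^1(a,b)}$ bounded.

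Finally, by the continuous embedding $W^{2,1}(a,b)\hookrightarrow C^1[a,b]$ — concretely $\abs{\phi'(a)}\leq(b-a)^{-1}\norm{\phi'}_{L^1(a,b)}+\norm{\phi''}_{L^1(a,b)}$ together with the interpolation inequality $\norm{\phi'}_{L^1(a,b)}\leq C_1\lrp[\big]{\norm{\phi}_{L^1(a,b)}+\norm{\phi''}_{L^1(a,b)}}$ (with $C_1$ depending only on $b-a$, obtained by controlling $\phi'$ at a point furnished by the mean value theorem from points in the left and right thirds of $(a,b)$ where $\abs{\phi}$ does not exceed its average) — there is $C_0>0$, depending only on $b-a$, with $\abs{\phi'(a)}+\abs{\phi'(b)}\leq C_0\norm{\phi}_{W^{2,1}(a,b)}$. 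Collecting the estimates,
\[
\norm{\lambda(\lambda-\tiG)^{-1}\phi}_{W^{2,1}(a,b)}=\norm{\lambda u}_{L^1(a,b)}+\norm{\lambda u''}_{L^1(a,b)}\leq\lrp[\big]{2M+C_0}\norm{\phi}_{W^{2,1}(a,b)},
\]
so the proposition holds with $\tilde M\coloneqq 2M+C_0$. The crux, as indicated, is the uniform-in-$\lambda$ control of the corrector $z$: naive estimates diverge, and one must use the cancellation built into its closed form; the rest is bookkeeping together with the resolvent estimate~\eqref{eq:laplacian-res-est}.
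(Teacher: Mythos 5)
Your proof is correct, and it reaches the same structural conclusion as the paper's --- namely the decomposition of \( \lambda\psi_\lambda'' \) into \( \lambda(\lambda-G)^{-1}\phi'' \) plus a boundary corrector controlled by \( \abs{\phi'(a)}+\abs{\phi'(b)} \) and hence, via the embedding \( W^{2,1}(a,b)\hookrightarrow C^1[a,b] \), by \( \norm{\phi}_{W^{2,1}(a,b)} \) --- but by a genuinely different route. The paper works with the explicit resolvent kernel, rewrites it by the method of images as a series \( \sum_{k\in\Z} \) of reflected exponentials, differentiates twice under the integral sign, and then estimates the resulting boundary series \( \Phi_\lambda \) term by term. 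You instead observe directly that \( w\coloneqq u'' \) solves the same Neumann problem with source \( \phi'' \) and inhomogeneous boundary data \( w'(a)=-\phi'(a) \), \( w'(b)=-\phi'(b) \), split off the corrector \( z \) in closed hyperbolic form, and exploit the positivity of the two \( \cosh \) terms together with the exact identity \( \int_a^b\cosh(\mu(b-x))\dd x=\mu^{-1}\sinh(\mu(b-a)) \) to get the clean, \( \lambda \)-uniform bound \( \norm{\lambda z}_{L^1(a,b)}\leq\abs{\phi'(a)}+\abs{\phi'(b)} \). Your corrector \( z \) is of course the paper's \( \Phi_\lambda \) summed in closed form, but your derivation avoids the kernel expansion entirely, works for general \( (a,b) \) without rescaling, and makes the cancellation that defeats naive term-by-term estimates explicit; you also supply a self-contained proof of the trace and interpolation inequalities where the paper simply cites the Sobolev embedding theorem. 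The one small redundancy is the appeal to closedness of \( \tiG \): once you have exhibited \( (\lambda-\tiG)^{-1} \) as the restriction of \( (\lambda-G)^{-1} \) and bounded it on \( W^{2,1}(a,b) \), membership of \( \lambda \) in \( \rho(\tiG) \) already follows.
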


\begin{proof}
For simplicity, we assume that \( a = 0 \) and \( b = 1 \).
The general case follows in the same way.

For fixed \( \phi \in W^{2,1}(0,1) \) and \( \lambda > 0 \), let
\( \psi_{\lambda} \coloneqq (\lambda - G)^{-1}\phi \) be given
by~\eqref{eq:sol-resol}.
Since \( \lambda \psi_{\lambda} - \psi_{\lambda}'' = \phi \), and
\( \phi \in W^{2,1}(0,1) \), it follows that
\( \psi_{\lambda}'' \in W^{2,1}(0,1) \).
Let \( D_{\mu} \coloneqq 1-\e^{-2\mu} \), and rewrite~\eqref{eq:sol-constants}
in the form
\[
c_{\mu} = \frac{1}{2\mu D_{\mu}} \int_0^1 (\e_{\mu}(2+y) + \e_{\mu}(2-y))
\phi(y) \dd y,
\]
and
\[
d_{\mu} = \frac{1}{2\mu D_{\mu}} \int_0^1 (\e_{\mu}(y) +
\e_{\mu}(2-y)) \phi(y) \dd y.
\]
Then~\eqref{eq:sol-resol} takes the form
\begin{equation*}
\begin{split}
\psi_{\lambda}(x) = \frac{1}{2\mu D_{\mu}} \int_0^1 \lrb[\big]{ &\e_{\mu}(\abs{
    x-y }) - \e_{\mu}(2+\abs{ x-y }) + \e_{\mu}(2-x+y)\\ &+ \e_{\mu}(2-x-y) +
  \e_{\mu}(x+y) + \e_{\mu}(2+x-y)} \phi(y) \dd y
\end{split}
\end{equation*}
for \( x \in (0,1) \).
Expanding \( D_{\mu}^{-1} \) into the geometric series
\( \sum_{k \geq 0} \e^{-2\mu k} \) we note that
\[
D_{\mu}^{-1} \lrb[\big]{ \e_{\mu}( t ) - \e_{\mu}(2+t) } = \e_{\mu}( t ), \qquad
t \geq 0,
\]
and
\[
D_{\mu}^{-1} \lrb[\big]{ \e_{\mu}(2-t) + \e_{\mu}(t) } = \sum_{k \geq
  1} \e_{\mu}(2k-t) + \sum_{k \leq 0} \e_{\mu}(-2k+t) = \sum_{k \in \Z }
\e_{\mu}(\abs{ 2k+t }),
\]
for \( t \in [0,2] \).
Similarly,
\begin{equation*}
\begin{split}
D_{\mu}^{-1} \lrb[\big]{ \e_{\mu}(2-t) + \e_{\mu}(2+t) } = \sum_{k \in \Z }
\e_{\mu}(\abs{ 2k+t }) - \e_{\mu}(\abs{ t }), \qquad t \in [-2,2].
\end{split}
\end{equation*}
Finally, we may rewrite \( \psi_{\lambda} \) in the form
\[
\frac{1}{2\mu} \sum_{k \in \Z} \int_0^1 \lrb[\big]{ \e_{\mu}(\abs{
    2k+x-y }) + \e_{\mu}(\abs{ 2k+x+y }) } \phi(y) \dd y, \qquad x \in (0,1).
\]
Changing variables in the integral we see that
\begin{equation}
\label{eq:sol-convolution}
\psi_{\lambda}(x) = \frac{1}{2\mu} \sum_{k \in \Z} \lrb[\bigg]{ \int_{x-1}^x \e_{\mu}(\abs{
    2k+t }) \phi(x-t) \dd t + \int_x^{x+1}
  \e_{\mu}(\abs{ 2k+t }) \phi(t-x) \dd t }
\end{equation}
for \( x \in (0,1) \).
Differentiating this twice leads to
\[
\psi_{\lambda}'' = \Psi_{\lambda} + \Phi_{\lambda},
\]
where \( \Psi_{\lambda} \) is given by the right-hand side
of~\eqref{eq:sol-convolution} with \( \phi \) replaced by \( \phi'' \), that is
\( \Psi_{\lambda} = (\lambda - G)^{-1} \phi'' \), and
\[
\Phi_{\lambda}(x) \coloneqq \frac{1}{\mu} \sum_{k \in \Z} \lrb[\big]{
  \e_{\mu}(\abs{ 2k+x }) \phi'(0) - \e_{\mu}(\abs{ 2k-1+x }) \phi'(1) }, \qquad
x \in (0,1).
\]
In order to estimate the norm of \( \Phi_{\lambda} \), observe that
\[
\sum_{k \geq 1} \e_{\mu}(2k) = \frac{1}{\e^{2\mu}-1} < \frac{1}{2\mu},
\]
which implies
\[
\sum_{k \in \Z} \e_{\mu}(\abs{ 2k + t }) \leq \e_{\mu}(\abs{ t }) + 2 \sum_{k
  \geq 1} \e_{\mu}(2k) = \e_{\mu}(\abs{ t }) + \frac{1}{\mu}, \qquad t \in
[-1,1].
\]
Therefore, by the Sobolev embedding theorem,
\[
\norm{ \lambda \Phi_{\lambda} }_{L^1(0,1)} \leq 2\lrp[\big]{\mu \norm{ \e_{\mu}
  }_{L^1(0,1)} + 1 } \norm{ \phi }_{W^{2,1}(0,1)} = 4 \norm{ \phi
}_{W^{2,1}(0,1)}.
\]
Finally, by~\eqref{eq:laplacian-res-est},
\begin{equation*}
\begin{split}
\norm{ \lambda \psi_{\lambda} }_{W^{2,1}(0,1)} &\leq \norm{ \lambda
  \psi_{\lambda} }_{L^1(0,1)} + \norm{ \lambda \Psi_{\lambda} }_{L^1(0,1)} +
\norm{ \lambda \Phi_{\lambda} }_{L^1(0,1)}\\ &\leq M \norm{ \phi }_{L^1(0,1)} +
M \norm{ \phi'' }_{L^1(0,1)} + 4 \norm{ \phi }_{W^{2,1}(0,1)}\\
&\leq (2M + 4) \norm{ \phi }_{W^{2,1}(0,1)},
\end{split}
\end{equation*}
which completes the proof.
\end{proof}

\subsection{Generation theorem in \texorpdfstring{\( L^1(S) \)}{L1(S)}}
\label{sec:sub-sub-Markov-semigroup}

As we said before, we know from~\cite[Proposition~2.1]{Bobrowski-diff} that for
each \( n \in \N \) the operator \( A_n \) generates a~Feller semigroup in
\( C(S) \).
We prove that the operator \( A_n^{*} \) defined in
Section~\ref{sec:adjoint-semigroup} generates a \emph{sub-Markov} semigroup
\( \set{ \e^{tA_n^{*}} }_{t \geq 0} \) in \( L^1(S) \), that is a~semigroup of
operators such that for every nonnegative \( \phi \in L^1(S) \) we have
\( \e^{tA_n^{*}} \phi \geq 0 \) and
\( \int_S \e^{tA_n^{*}} \phi \leq \int_S \phi \) for all \( t \geq 0 \).
Moreover, if the semigroup \( \sem{ \e^{tA_n} }_{t \geq 0} \) is conservative,
we show that \( \sem{ \e^{tA_n^{*}} }_{t \geq 0} \) is \emph{Markov}, that is
\( \int_S \e^{tA_n^{*}} \phi = \int_S \phi \) for all nonnegative
\( \phi \in L^1(S) \).
The main theorem of this section is as follows.

\begin{theorem}
\label{thm:dual-is-Markov}
For each \( n \in \N \) the operator \( A_n^{*} \) generates a sub-Markov
semigroup in \( L^1(S) \).
Moreover, if the semigroup generated by \( A_n \) is conservative, then
\( A_n^{*} \) generates a Markov semigroup.
\end{theorem}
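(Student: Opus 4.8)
The plan is to verify the Hille--Yosida conditions for \( A_n^{*} \) in \( L^1(S) \), obtain from them a \( C_0 \)-semigroup of contractions, and then upgrade contractivity together with positivity of the resolvent to the sub-Markov property; the Markov case will follow from a boundary computation under the conservativity hypothesis. Throughout I fix \( \lambda > 0 \) and write \( R_\lambda \coloneqq (\lambda - A_n)^{-1} \) for the resolvent of \( A_n \) in \( C(S) \): it exists, is positive, and satisfies \( \norm{ \lambda R_\lambda }_{\lin(C(S))} \leq 1 \) because \( \sem{ \e^{tA_n} }_{t \geq 0} \) is a Feller semigroup and \( R_\lambda = \int_0^{\infty} \e^{-\lambda t} \e^{tA_n} \dd t \). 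Density of \( D(A_n^{*}) \) is immediate: if each \( \psi_i \) is smooth and compactly supported in the interior of \( E_i \), then \( \psi = (\psi_i)_{i \in \cn} \in W^{2,1}(S) \) and all the endpoint values occurring in~\eqref{eq:transition-conditions-dual1}--\eqref{eq:transition-conditions-dual2} vanish, so \( \psi \in D(A_n^{*}) \), and such \( \psi \) are dense in \( L^1(S) \).

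Next I would construct the resolvent of \( A_n^{*} \). For \( \phi \in L^1(S) \) the equation \( \lambda \psi - A_n^{*}\psi = \phi \) splits over the edges into one-dimensional problems \( \lambda \psi_i - \kappa_n \sigma_i \psi_i'' = \phi_i \), the general solution of each being a particular solution (e.g.\ the rescaled analogue of the kernel \( J_\mu \) from Section~\ref{sec:sect-lap}) plus an arbitrary combination of \( \e_{\mu_i} \) and \( \e_{-\mu_i} \) with \( \mu_i = \sqrt{ \lambda / (\kappa_n \sigma_i) } \). Imposing the \( 2N \) transmission conditions~\eqref{eq:transmission-cond-F} on this \( 2N \)-parameter family reduces the resolvent problem to a square linear system for the free constants. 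That system is invertible: otherwise there would be a nonzero \( \psi \in D(A_n^{*}) \) with \( (\lambda - A_n^{*})\psi = 0 \), and then Lemma~\ref{lem:dual-semigroup}, applied with \( f = R_\lambda g \in D(A_n) \), would give \( \int_S \psi g = \int_S \psi (\lambda - A_n) R_\lambda g = \int_S (\lambda - A_n^{*})\psi \cdot R_\lambda g = 0 \) for every \( g \in C(S) \), forcing \( \psi = 0 \). Hence \( \lambda - A_n^{*}\colon D(A_n^{*}) \to L^1(S) \) is a bijection, so \( (0, \infty) \subseteq \rho(A_n^{*}) \) and \( A_n^{*} \) is closed.

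To control the resolvent I would recognize it as a transpose. Fix \( \phi \in L^1(S) \), set \( \psi \coloneqq (\lambda - A_n^{*})^{-1}\phi \in D(A_n^{*}) \), and for \( g \in C(S) \) apply Lemma~\ref{lem:dual-semigroup} with \( f = R_\lambda g \): this yields \( \int_S \psi g = \int_S (\lambda - A_n^{*})\psi \cdot R_\lambda g = \int_S \phi\, R_\lambda g \), so \( \psi \), viewed as the measure \( \psi \dd x \in C(S)^{*} \), equals \( R_\lambda'(\phi \dd x) \). Since \( R_\lambda \) is positive with \( \norm{ \lambda R_\lambda }_{\lin(C(S))} \leq 1 \), its adjoint \( R_\lambda' \) is positive and \( \lambda R_\lambda' \) has operator norm at most \( 1 \) on \( C(S)^{*} \); restricting to \( L^1(S) \), on which the total variation norm coincides with \( \dnorm_{L^1(S)} \), shows that \( (\lambda - A_n^{*})^{-1} \) is positive and \( \norm{ \lambda (\lambda - A_n^{*})^{-1} }_{\lin(L^1(S))} \leq 1 \). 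By the Hille--Yosida theorem, \( A_n^{*} \) generates a \( C_0 \)-semigroup of contractions \( \sem{ \e^{tA_n^{*}} }_{t \geq 0} \) in \( L^1(S) \); the exponential formula \( \e^{tA_n^{*}}\phi = \lim_{k} \bigl( \tfrac{k}{t}(\tfrac{k}{t} - A_n^{*})^{-1} \bigr)^{k}\phi \) and positivity of the resolvent give \( \e^{tA_n^{*}} \geq 0 \), and then for \( \phi \geq 0 \) we get \( \int_S \e^{tA_n^{*}}\phi = \norm{ \e^{tA_n^{*}}\phi }_{L^1(S)} \leq \norm{ \phi }_{L^1(S)} = \int_S \phi \); thus the semigroup is sub-Markov.

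For the Markov case, assume \( \sum_{j \neq i} l_{ij} = l_i \) and \( \sum_{j \neq i} r_{ij} = r_i \) for all \( i \). For \( \psi \in D(A_n^{*}) \) one computes \( \int_S A_n^{*}\psi = \kappa_n \sum_{i \in \cn} \sigma_i \bigl( \psi'(R_i) - \psi'(L_i) \bigr) \); substituting~\eqref{eq:transition-conditions-dual1}--\eqref{eq:transition-conditions-dual2} and reordering the double sums exactly as in the proof of Lemma~\ref{lem:dual-semigroup}, the contributions regroup into \( \sum_j \sigma_j \psi(L_j) \sum_{i \neq j} l_{ji} + \sum_j \sigma_j \psi(R_j) \sum_{i \neq j} r_{ji} - \sum_i \sigma_i \bigl( l_i \psi(L_i) + r_i \psi(R_i) \bigr) \), which vanishes by conservativity. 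Hence, for \( \phi \in D(A_n^{*}) \), the map \( t \mapsto \int_S \e^{tA_n^{*}}\phi \) has vanishing derivative, so \( \int_S \e^{tA_n^{*}}\phi = \int_S \phi \); density of \( D(A_n^{*}) \) and continuity of \( \e^{tA_n^{*}} \) and of the functional \( \phi \mapsto \int_S \phi \) extend this to all \( \phi \in L^1(S) \), so the semigroup is Markov. I expect the genuine obstacle to be the fact that \( C(S)^{*} \) is a space of measures rather than \( L^1(S) \), so the generation result cannot simply be dualized from \( C(S) \): one must first produce the resolvent of \( A_n^{*} \) by hand — the reduction to a finite linear system — and only afterwards identify it with the transpose of \( R_\lambda \) restricted to \( L^1(S) \), which is what makes positivity and the contraction bound essentially automatic.
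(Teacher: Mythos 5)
Your proof is correct, and its skeleton — the integration‑by‑parts duality of Lemma~\ref{lem:dual-semigroup} combined with the Feller property of \( A_n \) to extract both the bound \( \norm{ \lambda(\lambda - A_n^{*})^{-1} }_{\lin(L^1(S))} \leq 1 \) and positivity of the resolvent, followed by Hille--Yosida and the exponential formula — is the same as the paper's. The genuine divergence is in how you establish surjectivity of \( \lambda - A_n^{*} \). The paper does this abstractly (Lemma~\ref{lem:range-cond}): it transports the part of \( A_n^{*} \) in \( W^{2,1}(S) \) via the isomorphism \( I_n = I - \kappa_n^{-1}J \) to a perturbation of the Neumann Laplacian, controls the perturbation with Bielecki norms and a Neumann series, invokes the bounded perturbation theorem to get the range condition in \( W^{2,1}(S) \) for large \( \lambda \), and then upgrades to all of \( L^1(S) \) using closedness of \( A_n^{*} \) (inherited from \( A_n' \) on \( M_b(S) \)) plus density of \( W^{2,1}(S) \). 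You instead solve the resolvent equation explicitly edge by edge, reduce to a \( 2N \times 2N \) linear system for the free exponential coefficients, and obtain invertibility from injectivity, which you deduce from the duality identity together with surjectivity of \( \lambda - A_n \) in \( C(S) \); this is more elementary, gives every \( \lambda > 0 \) at once, and sidesteps the entire \( W^{2,1} \) apparatus. The trade‑off is that the paper's \( W^{2,1} \) machinery (the operators \( J \), \( K \), the isomorphism \( I_n \), and the density of \( D(\tiA_n^{*}) \)) is reused essentially verbatim to verify the hypotheses of Kurtz's theorem in the convergence section, so it is not overhead there, and it would survive in settings where explicit solution formulas are unavailable. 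Your treatment of the Markov case (showing \( \int_S A_n^{*}\psi = 0 \) directly under conservativity) also differs mildly from the paper's, which instead uses \( (\lambda - A_n)^{-1}\ind_S = \lambda^{-1}\ind_S \) in the duality identity; both are sound.
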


Before we prove the theorem, we need auxiliary results.
In what follows in this section we fix \( n \in \N \).

\begin{lemma}
\label{lem:existence-of-dual-resolvent}
The resolvent set of \( A_n^{*} \) contains the interval \( (0,+\infty) \).
Moreover, for each \( \lambda > 0 \) we have
\[
\norm{ \lambda(\lambda - A_n^{*})^{-1} }_{\lin(L^1(S))} \leq 1
\]
and
\begin{equation}
\label{eq:dual-resolvent}
\int_S \phi (\lambda - A_n)^{-1} f = \int_S f (\lambda - A_n^{*})^{-1} \phi, \qquad
f \in C(S), \ \phi \in L^1(S).
\end{equation}
\end{lemma}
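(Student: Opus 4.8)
The plan is to establish the resolvent estimate and the duality formula simultaneously, working from the explicit analysis of the one-dimensional Laplacian done in Section~\ref{sec:sect-lap}. First I would fix \( \lambda > 0 \) and \( \phi \in L^1(S) \), and look for \( \psi = (\psi_i)_{i \in \cn} \in D(A_n^{*}) \) solving \( \lambda \psi - A_n^{*} \psi = \phi \), i.e.\ \( \lambda \psi_i - \kappa_n \sigma_i \psi_i'' = \phi_i \) on each edge. On a single edge this is, up to rescaling the spatial variable or equivalently replacing \( \lambda \) by \( \lambda/(\kappa_n \sigma_i) \), exactly the resolvent equation for the Neumann Laplacian \( G \) studied above; so the general solution is a particular solution (built from the \( J_\mu \)-type kernel, which has \( L^1 \to L^1 \) norm at most \( (\lambda)^{-1} \) times a constant, and in fact, with the right normalisation, is a sub-Markov kernel) plus a two-parameter family \( c_i \e_{-\mu_i} + d_i \e_{\mu_i} \) of exponentials, where \( \mu_i = \sqrt{\lambda/(\kappa_n \sigma_i)} \). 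The \( 2N \) constants \( (c_i, d_i) \) are then to be pinned down by the \( 2N \) transmission conditions~\eqref{eq:transmission-cond-F}. I would set this up as a linear system \( \mathcal{M}(\lambda) (c,d)^{T} = (\text{data from } \phi) \) and show \( \mathcal{M}(\lambda) \) is invertible for every \( \lambda > 0 \); the cleanest route is to argue that if the homogeneous system had a nonzero solution, the corresponding \( \psi \) would be a nonzero element of \( \ker(\lambda - A_n^{*}) \), which is impossible once we have the a priori bound \( \norm{\lambda \psi}_{L^1(S)} \leq \norm{\phi}_{L^1(S)} \) — so really the estimate and the existence feed each other and should be proved together.

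For the norm estimate \( \norm{\lambda(\lambda - A_n^{*})^{-1}}_{\lin(L^1(S))} \leq 1 \) and the sub-Markov property, my preferred approach is to deduce them from the \( C(S) \) side by an abstract duality argument rather than by estimating the explicit kernel. Since \( A_n \) generates a Feller (hence sub-Markovian contraction) semigroup in \( C(S) \), its resolvent satisfies \( \norm{\lambda(\lambda - A_n)^{-1}}_{\lin(C(S))} \leq 1 \) and is positivity preserving. If I can establish the duality identity~\eqref{eq:dual-resolvent}, then for \( \phi \in L^1(S) \) and any \( f \in C(S) \) with \( \norm{f}_{\infty} \leq 1 \),
\[
\abs[\bigg]{ \int_S f \, \lambda(\lambda - A_n^{*})^{-1} \phi } = \abs[\bigg]{ \int_S \lambda(\lambda - A_n)^{-1} f \cdot \phi } \leq \norm{ \lambda(\lambda - A_n)^{-1} f }_{\infty} \norm{\phi}_{L^1(S)} \leq \norm{\phi}_{L^1(S)},
\]
and taking the supremum over such \( f \) gives the \( L^1 \) bound; testing against nonnegative \( f \) gives positivity of the resolvent, and testing against \( f = \ind_S \) gives \( \int_S \lambda(\lambda - A_n^{*})^{-1}\phi \leq \int_S \phi \) (with equality in the conservative case) — exactly the sub-Markov resolvent properties. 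So the real content is~\eqref{eq:dual-resolvent}.

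To prove~\eqref{eq:dual-resolvent}, set \( f \in C(S) \), \( u \coloneqq (\lambda - A_n)^{-1} f \in D(A_n) \), and \( \psi \coloneqq (\lambda - A_n^{*})^{-1} \phi \in D(A_n^{*}) \) (the latter existing by the first part of the argument). Then \( f = \lambda u - A_n u \) and \( \phi = \lambda \psi - A_n^{*}\psi \), so
\[
\int_S \phi \, u - \int_S f \, \psi = \int_S \psi \, A_n u - \int_S (A_n^{*}\psi) \, u = 0
\]
by Lemma~\ref{lem:dual-semigroup} — after rearranging, this is precisely~\eqref{eq:dual-resolvent}. The one point needing care is that Lemma~\ref{lem:dual-semigroup} is stated for \( f \in D(A_n) \) and \( \phi \in D(A_n^{*}) \), which is exactly the situation here, so this goes through verbatim. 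The main obstacle, then, is the very first step: proving that the boundary linear system \( \mathcal{M}(\lambda)(c,d)^{T} = \text{(data)} \) is solvable for all \( \lambda > 0 \), that the resulting \( \psi \) genuinely lies in \( W^{2,1}(S) \subset D(A_n^{*}) \), and that \( D(A_n^{*}) \) is dense — the density being needed for the resolvent/generator bookkeeping; I would handle density by noting that \( D(A_n^{*}) \) contains all smooth functions vanishing near the vertices, which is already dense in \( L^1(S) \). The invertibility of \( \mathcal{M}(\lambda) \) is where I expect to spend the most effort: either a direct determinant computation (feasible but tedious), or the cleaner injectivity-via-a-priori-estimate argument sketched above, which avoids the determinant entirely at the cost of having to first establish \( \ker(\lambda - A_n^{*}) = \{0\} \) by a maximum-principle-type estimate on \( L^1 \) — and the \( L^1 \) setting (no pointwise maximum principle) is precisely why one must be a little clever here, most likely bootstrapping from the one-dimensional resolvent estimate~\eqref{eq:laplacian-res-est} edge by edge and then absorbing the boundary terms.
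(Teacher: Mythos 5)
Your architecture is sound in outline, and two of its three pillars are solid: the derivation of \eqref{eq:dual-resolvent} from Lemma~\ref{lem:dual-semigroup} applied to \( u = (\lambda - A_n)^{-1}f \) and \( \psi = (\lambda - A_n^{*})^{-1}\phi \) is exactly right (the paper gets the same identity by citing the abstract fact that \( A_n^{*} \) is the part of the adjoint of \( A_n \)), and deducing the norm bound, positivity and the sub-Markov property by testing against \( f \in C(S) \) is also how the paper proceeds (there and in Theorem~\ref{thm:dual-is-Markov}). Your route to \emph{surjectivity} is genuinely different from the paper's: you solve the ODE explicitly on each edge and reduce to a \( 2N \times 2N \) linear system \( \mathcal{M}(\lambda) \), whereas the paper never constructs the resolvent; it proves instead (Lemma~\ref{lem:range-cond}) that the range of \( \lambda - A_n^{*} \) contains \( W^{2,1}(S) \), via an isomorphism of \( W^{2,1}(S) \) reducing the transmission conditions to Neumann conditions plus a perturbation argument in Bielecki norms, and then upgrades to full surjectivity using closedness of the range of a closed dissipative operator. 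Your construction, if completed, would give all \( \lambda > 0 \) at once and bypass Lemma~\ref{lem:range-cond} entirely, which is a real simplification.

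The genuine gap is at the point you yourself flag as the crux: proving \( \ker(\lambda - A_n^{*}) = \{0\} \), equivalently the injectivity of \( \mathcal{M}(\lambda) \), equivalently the a~priori bound \( \lambda \norm{\psi}_{L^1(S)} \leq \norm{(\lambda - A_n^{*})\psi}_{L^1(S)} \) for \( \psi \in D(A_n^{*}) \). Your proposed remedy --- ``a maximum-principle-type estimate on \( L^1 \)\dots bootstrapping from~\eqref{eq:laplacian-res-est} edge by edge and then absorbing the boundary terms'' --- is not an argument; the boundary terms are precisely the inter-edge coupling \( F_{L,i}, F_{R,i} \), they carry no sign, and a single-edge estimate cannot absorb them. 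The good news is that the missing estimate follows from tools you already have in hand, with no maximum principle and no circularity: for \( \psi \in D(A_n^{*}) \) and any \( g \in C(S) \) with \( \norm{g}_{\infty} \leq 1 \), apply Lemma~\ref{lem:dual-semigroup} with \( f \coloneqq (\lambda - A_n)^{-1} g \in D(A_n) \) (which exists because \( A_n \) is already known to generate a Feller semigroup) to get
\begin{equation*}
\int_S \psi\, g = \int_S \lrb[\big]{(\lambda - A_n^{*})\psi}\,(\lambda - A_n)^{-1} g,
\end{equation*}
whence \( \abs[\big]{\int_S \psi g} \leq \lambda^{-1} \norm{(\lambda - A_n^{*})\psi}_{L^1(S)} \); taking the supremum over such \( g \) gives dissipativity of \( A_n^{*} \) on its whole domain, hence triviality of the kernel and invertibility of \( \mathcal{M}(\lambda) \). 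This is exactly the paper's argument, phrased there via the adjoint \( A_n' \) acting on \( M_b(S) \) and the identification \( A_n'\mu_{\phi} = A_n^{*}\phi \). Note, crucially, that dissipativity must be established for arbitrary elements of \( D(A_n^{*}) \) \emph{before} the resolvent exists; your write-up only obtains the bound for elements in the range of \( (\lambda - A_n^{*})^{-1} \), which is too late in the logical order. With this one step repaired (and the routine verification that the particular solution and the exponentials lie in \( W^{2,1}(S) \)), your proof closes.
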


Observe that if the resolvent of \( A_n^{*} \) exists, then
equality~\eqref{eq:dual-resolvent} becomes obvious
(see~\cite[Lemma~1.10.2]{MR710486}), since \( A_n^{*} \) is the part of the
adjoint of \( A_n \) in \( L^1(S) \).
However, the existence of \( (\lambda - A_n^{*})^{-1} \) is not obvious.
We prove Lemma~\ref{lem:existence-of-dual-resolvent} in a~moment.
First we show that the part of \( \lambda - A_n^{*} \) in \( W^{2,1}(S) \)
satisfies the range condition.

\begin{lemma}
\label{lem:range-cond}
For sufficiently large \( \lambda > 0 \) the image of\/ \( D(A_n^{*}) \) under
the operator \( \lambda - A_n^{*} \) contains \( W^{2,1}(S) \).
\end{lemma}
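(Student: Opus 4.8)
The plan is to solve, for arbitrary \( g = (g_i)_{i \in \cn} \in W^{2,1}(S) \) and all sufficiently large \( \lambda > 0 \), the resolvent equation \( \lambda\phi - \kappa_n\sigma\phi'' = g \) by an explicit edge-by-edge construction, choosing finitely many free constants so that the transmission conditions \eqref{eq:transmission-cond-F} hold. Set \( \mu_i \coloneqq \sqrt{\lambda/(\kappa_n\sigma_i)} \). On the \( i \)-th edge, identified with \( (0,d_i) \), the general solution in \( W^{2,1}(0,d_i) \) of \( \lambda u - \kappa_n\sigma_i u'' = g_i \) is
\[
\phi_i(x) = \psi_i(x) + a_i\,\e^{-\mu_i x} + b_i\,\e^{-\mu_i(d_i - x)}, \qquad x \in (0,d_i),
\]
with \( a_i, b_i \in \R \) and \( \psi_i \coloneqq (\lambda - \kappa_n\sigma_i G)^{-1} g_i \) the particular solution furnished by the one-dimensional Neumann Laplacian \( G \) of Section~\ref{sec:sect-lap}; since \( \lambda/(\kappa_n\sigma_i) > 0 \), this \( \psi_i \) is well defined, belongs to \( W^{2,1}(0,d_i) \) (in fact to \( D(\tiG) \), by Proposition~\ref{prop:part-res-estimate}), and satisfies \( \psi_i'(0) = \psi_i'(d_i) = 0 \). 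Using that the derivative of \( \psi_i \) vanishes at the endpoints, the inner one-sided derivatives and the boundary values of \( \phi_i \) are
\[
\phi_i'(L_i) = \mu_i\lrp[\big]{-a_i + \e^{-\mu_i d_i} b_i}, \qquad \phi_i'(R_i) = \mu_i\lrp[\big]{-\e^{-\mu_i d_i} a_i + b_i},
\]
\[
\phi_i(L_i) = \psi_i(L_i) + a_i + \e^{-\mu_i d_i} b_i, \qquad \phi_i(R_i) = \psi_i(R_i) + \e^{-\mu_i d_i} a_i + b_i.
\]

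Feeding these expressions into \eqref{eq:transmission-cond-F} and dividing the two equations attached to the \( i \)-th edge by \( \kappa_n\mu_i \) turns the transmission conditions into a linear system \( \mathcal M_\lambda \mathbf v = \mathbf w_\lambda \) for \( \mathbf v \coloneqq (a_1,b_1,\dots,a_N,b_N)^{T} \in \R^{2N} \). Here \( \mathbf w_\lambda \) collects the quantities \( (\kappa_n\mu_i)^{-1}F_{L,i}\psi \) and \( (\kappa_n\mu_i)^{-1}F_{R,i}\psi \) (with \( \psi = (\psi_j)_j \)), which are finite because point evaluations are bounded functionals on each \( W^{2,1}(0,d_j) \) by the Sobolev embedding. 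The matrix decomposes as \( \mathcal M_\lambda = \mathcal M_\lambda^{0} + \mathcal R_\lambda \), where \( \mathcal M_\lambda^{0} \) is block diagonal with \( 2\times2 \) blocks
\[
\begin{pmatrix} -1 & \e^{-\mu_i d_i} \\ -\e^{-\mu_i d_i} & 1 \end{pmatrix},
\]
of determinant \( \e^{-2\mu_i d_i} - 1 \), which tends to \( -1 \) as \( \lambda \to \infty \); and \( \mathcal R_\lambda \) gathers the contributions of the functionals \( F_{L,i}, F_{R,i} \) to the unknowns \( a_j, b_j \). Every entry of \( \mathcal R_\lambda \) carries the factor \( (\kappa_n\mu_i)^{-1} = \sqrt{\sigma_i/(\kappa_n\lambda)} \), the remaining factors — the coefficients \( \sigma_i l_i \), \( \sigma_j l_{ji} \), \( \sigma_j r_{ji} \) and the numbers \( 1 \), \( \e^{-\mu_j d_j} \) — being uniformly bounded in \( \lambda \); hence \( \norm{\mathcal R_\lambda} \to 0 \) as \( \lambda \to \infty \). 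Since \( (\mathcal M_\lambda^{0})^{-1} \) also stays bounded, for all large enough \( \lambda \) the matrix \( \mathcal M_\lambda = \mathcal M_\lambda^{0}\lrp[\big]{I + (\mathcal M_\lambda^{0})^{-1}\mathcal R_\lambda} \) is invertible, and the system admits a solution \( \mathbf v \).

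Fixing such a \( \lambda \) and reading off \( a_i, b_i \) from \( \mathbf v \), the resulting \( \phi = (\phi_i)_{i \in \cn} \) lies in \( W^{2,1}(S) \) (each \( \phi_i \) is the sum of a \( W^{2,1} \)-function and two smooth exponentials), satisfies \eqref{eq:transition-conditions-dual1}--\eqref{eq:transition-conditions-dual2} by the choice of \( \mathbf v \), hence belongs to \( D(A_n^{*}) \), and obeys \( (\lambda - A_n^{*})\phi = g \) by construction; moreover \( \phi'' = (\lambda\phi - g)/(\kappa_n\sigma) \in W^{2,1}(S) \), so \( \phi \) is even in the domain of the part of \( A_n^{*} \) in \( W^{2,1}(S) \). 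As \( g \) was an arbitrary element of \( W^{2,1}(S) \), this yields the asserted range inclusion. I expect the only real work to be the estimate on \( \mathcal R_\lambda \): one must verify that, after division by \( \kappa_n\mu_i \), every term generated by \( F_{L,i} \) and \( F_{R,i} \) is genuinely \( O(\lambda^{-1/2}) \) — so that the ``diagonal'' entries \( -1 \) in \( \mathcal M_\lambda^{0} \) dominate — and check that the primed sums, which (as \( \cg \) has no loops) retain at most one of \( \phi(L_j), \phi(R_j) \) per incident edge, do not disturb the block structure.
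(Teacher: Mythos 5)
Your proof is correct, but it follows a genuinely different route from the paper. You solve the resolvent equation \( \lambda\phi - \kappa_n\sigma\phi'' = g \) explicitly edge by edge, writing the general \( W^{2,1} \) solution as a Neumann particular solution plus the two decaying exponentials, and reduce the transmission conditions to a \( 2N\times 2N \) linear system whose matrix is a block-diagonal part (with blocks of determinant \( \e^{-2\mu_i d_i}-1 \to -1 \)) plus a remainder all of whose entries carry the factor \( (\kappa_n\mu_i)^{-1} = O(\lambda^{-1/2}) \); invertibility for large \( \lambda \) then follows by a Neumann-series/diagonal-dominance argument in \( \R^{2N} \). The paper instead conjugates the part \( \tiA_n^{*} \) of \( A_n^{*} \) in \( W^{2,1}(S) \) by the isomorphism \( I_n = I - \kappa_n^{-1}J \), which converts the transmission conditions into homogeneous Neumann conditions, writes the conjugated operator as \( \kappa_n\tiB + C + D \), and uses the resolvent bound of Proposition~\ref{prop:part-res-estimate} together with Bielecki norms and the bounded perturbation theorem to show that \( \tiA_n^{*} \) generates a semigroup in \( W^{2,1}(S) \), whence the range condition. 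Your argument is more elementary and self-contained (only the scalar Neumann resolvent and finite-dimensional linear algebra are needed), and it produces the preimage explicitly, even locating it in \( D(\tiA_n^{*}) \). What the paper's heavier machinery buys is reuse: the operators \( J \), \( K \), \( I_n \) and the splitting \( \kappa_n\tiB + C + D \) reappear in Lemmas~\ref{lem:condition-1-Kurtz} and~\ref{lem:condition-2-Kurtz} for the Kurtz convergence argument, and the generation statement in \( W^{2,1}(S) \) yields the density of \( D(\tiA_n^{*}) \) (hence of \( D(A_n^{*}) \) in \( L^1(S) \)) recorded in Remark~\ref{rem:domain-dense}; with your route that density would need a separate (admittedly easy) observation, e.g.\ that functions smooth and compactly supported in the interiors of the edges satisfy the transmission conditions trivially.
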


Before proving the lemma we introduce some notations.
It is crucial in our analysis to consider \( L^1(S) \) with the Bielecki-type
norm, see~\cite{bielecki56} or~\cite[p.~56]{granas2003}.
For \( i \in \cn \) and \( \omega > 0 \) let \( \norm{ \, \cdot \, }_{\omega} \)
be the norm in \( L^1(0,d_i) \) given by
\[
\norm{ \phi }_{\omega} \coloneqq \sup_{t \in (0,d_i)} \e^{-\omega t} \int_0^t
\abs{ \phi(s) } \dd s, \qquad \phi \in L^1(0,d_i).
\]
Naturally, see the beginning of Section~\ref{sec:adjoint-semigroup}, we set
\[
\norm{ \phi }_{\omega} \coloneqq \sum_{i \in \cn} \norm{ \phi_i \circ e_i
}_{\omega}, \qquad \phi = (\phi_i)_{i \in \cn} \in L^1(S).
\]
Such norm in \( L^1(S) \) is equivalent to the standard norm
\( \norm{ \, \cdot \, }_{L^1(S)} \).
It is also clear that
\begin{equation}
\label{eq:bielecki-lim}
\lim_{\omega \to +\infty} \norm{ \phi }_{\omega} = 0
\end{equation}
for every \( \phi \in L^1(S) \).
Furthermore, by \( \normt{ \, \cdot \, }_{\omega} \) we denote the related norm
in \( W^{2,1}(S) \), that is
\[
\normt{ \phi }_{\omega} \coloneqq \norm{ \phi }_{\omega} + \norm{ \phi''
}_{\omega}, \qquad \phi \in W^{2,1}(S).
\]
Finally, for simplicity of notation, also by \( \norm{ \, \cdot \, }_{\omega} \)
and \( \normt{ \, \cdot \, }_{\omega} \) we denote the operator norms
corresponding to defined above Bielecki's norms in, respectively, \( L^1(S) \)
and \( W^{2,1}(S) \).

For \( i \in \cn \) let \( G_i \) be the version of \( G \), see
Section~\ref{sec:sect-lap}, in \( L^1(E_i) \), and let \( B \) be the operator
in \( L^1(S) \) defined by
\begin{equation}
\label{eq:B-def}
B \phi \coloneqq \sigma \phi''
\end{equation}
with the domain \( D(B) \) composed of functions
\( \phi = (\phi_i)_{i \in \cn} \) such that \( \phi_i \in D(G_i) \).
Since \( B \) is equal to \( G_i \) on each \( E_i \),
operator \( B \) generates strongly continuous semigroup
\( \set{ \e^{tB} }_{t \geq 0} \).

The main idea of the proof of Lemma~\ref{lem:range-cond} is to consider the
isomorphic image of the part of \( A_n^{*} \) in \( W^{2,1}(S) \).
It turns out that it is possible to choose an isomorphism such that the image is
a~perturbation of the Laplacian with homogeneous Neumann boundary conditions.

For each \( i \in \cn \) choose smooth functions \( h_{L,i} \), \( h_{R,i} \)
defined on \( E_i \), and such that
\[
h_{L,i}(L_i) = h_{L,i}(R_i) = h_{R,i}(L_i) = h_{R,i}(R_i) = 0,
\]
and
\begin{equation}
\label{eq:h-derivative}
h_{L,i}'(L_i) = h_{R,i}'(R_i) = 1, \qquad h_{L,i}'(R_i) = h_{R,i}'(L_i) = 0.
\end{equation}
Let \( J \) be the linear operator in \( W^{2,1}(S) \) given by
\[
J \phi = \lrp[\big]{ (F_{L,i} \phi) h_{L,i} + (F_{R,i} \phi) h_{R,i} }_{i \in
  \cn}, \qquad \phi \in W^{2,1}(S),
\]
where \( F_{L,i} \) and \( F_{R,i} \) are linear functionals in \( W^{2,1}(S) \)
defined in Section~\ref{sec:adjoint-semigroup}.
By the Sobolev embedding theorem the functionals are bounded and there exists
\( M > 0 \), depending merely on permeability coefficients, such that
\[
\max_{i \in \cn} \normt{ F_{L,i} \phi + F_{R,i} \phi }_{\omega} \leq M \normt{
  \phi }_{\omega}, \qquad \phi \in W^{2,1}(S),\ \omega > 0.
\]
Hence the operator \( J \) is bounded and we estimate its Bielecki's norm,
obtaining
\begin{equation}
\label{eq:J-bound}
\normt{ J }_{\omega} \leq M \sum_{i \in \cn} (\normt{ h_{L,i} }_{\omega} + \normt{
  h_{R,i} }_{\omega})
\end{equation}
for all \( \omega > 0 \).
Let \( I_n\colon W^{2,1}(S) \to W^{2,1}(S) \) be the bounded linear operator
given by
\begin{equation}
\label{eq:isomorphism-on-W21}
I_n \coloneqq I_{W^{2,1}(S)} - \kappa_n^{-1} J.
\end{equation}
Here, \( I_{W^{2,1}(S)} \) is the identity operator in \( W^{2,1}(S) \).
Then, the choice of \( h_{L,i} \), \( h_{R,i} \) guarantees
(see~\cite[Lemma~3.1]{Bobrowski-diff}) that \( I_n \) is an isomorphism of
\( W^{2,1}(S) \) with the inverse
\begin{equation}
\label{eq:inverse-of-isomorphism}
I_n^{-1} = I_{W^{2,1}(S)} + \kappa_n^{-1} J.
\end{equation}
What is crucial, note that
\begin{equation}
\label{eq:domain-isomorphism}
\phi \in D(A_n^{*}) \quad \text{ if and only if } \quad I_n \phi
\in D(B),
\end{equation}
or equivalently, \( I_n \) is an isomorphism between functions in
\( W^{2,1}(S) \) satisfying conditions~\eqref{eq:transmission-cond-F}, and those
satisfying homogeneous Nuemann boundary conditions on each edge.

Furthermore, let \( K\colon W^{2,1}(S) \to W^{2,1}(S) \) be defined as \( J \)
with \( h_{L,i} \), \( h_{R,i} \) replaced by their second derivatives, that is
\begin{equation}
\label{eq:K-def}
K \phi \coloneqq \lrp[\big]{ (F_{L,i} \phi) h_{L,i}'' + (F_{R,i} \phi) h_{R,i}''
}_{i \in \cn}, \qquad \phi \in W^{2,1}(S).
\end{equation}

\begin{proof}[Proof of Lemma~\textup{\ref{lem:range-cond}}]
We consider \( W^{2,1}(S) \) as a~Banach space with the Bielecki norm
\( \normt{ \, \cdot \, }_{\omega} \).
We define \( \tilde B_n \coloneqq I_{n} \tiA_n^{*} I_{n}^{-1} \), where
\( \tiA_n^{*} \) is the part of \( A_n^{*} \) in \( W^{2,1}(S) \).
We have
\[
\tiA_n^{*} I_n^{-1} = \kappa_n \tiB + \sigma K,
\]
where \( \tiB \) is the part of \( B \) in \( W^{2,1}(S) \).
Moreover,
\[
D(\tiB_n) = I_n D(\tiA_n^{*}) = D(\tiB),
\]
where the last equality is a~consequence of~\eqref{eq:domain-isomorphism}.
Furthermore, by~\eqref{eq:isomorphism-on-W21},
\[
\tiB_n = \kappa_n \tiB - J\tiB + \sigma K - \kappa_n^{-1} \sigma JK.
\]
Denoting \( C \coloneqq -J \tiB \),
\( D \coloneqq \sigma K - \kappa_n^{-1} \sigma JK \), we have
\begin{equation}
\label{eq:B-partition}
\tiB_n = \kappa_n \tiB + C + D.
\end{equation}

By Proposition~\ref{prop:part-res-estimate}, there exists \( \tiM > 0 \) such
that \( \normt{ \lambda (\lambda - \kappa_n \tiB)^{-1} }_{\omega} \leq \tiM \)
for all \( \lambda > 0 \) (recall that the standard norm and the Bielecki norm
are equivalent).
Hence, using the fact that
\[
\kappa_n\tiB (\lambda - \kappa_n\tiB )^{-1} = \lambda(\lambda - \kappa_n
\tiB)^{-1} - I_{W^{2,1}(S)},
\]
we have
\begin{equation*}
\normt{ C(\lambda - \kappa_n \tiB)^{-1} }_{\omega} \leq \kappa_n^{-1} \normt{ J
}_{\omega} \normt{ \lambda(\lambda - \tiB)^{-1} - I_{W^{2,1}(S)} }_{\omega} \leq
\kappa_n^{-1} \normt{ J }_{\omega} (\tiM + 1).
\end{equation*}
Choose \( \omega > 0 \) large enough, so that
\( \normt{ J }_{\omega} < \kappa_n (\tiM+1)^{-1} \).
Such \( \omega \) exists by~\eqref{eq:bielecki-lim} and~\eqref{eq:J-bound}.
Then
\[
q \coloneqq \normt{ C(\lambda - \kappa_n \tiB)^{-1} }_{\omega} < 1,
\]
which implies that \( \lambda \in \rho(\kappa_n\tiB + C) \).
Therefore we have Neumann series expansion
\[
(\lambda - \kappa_n \tiB - C)^{-1} = (\lambda - \kappa_n \tiB)^{-1} \sum_{k \geq
  0} [C(\lambda - \kappa_n \tiB)^{-1}]^k,
\]
and consequently
\[
\normt{ (\lambda - \kappa_n \tiB - C)^{-1} }_{\omega} \leq \frac{\tiM}{1-q}
\frac{1}{\lambda}.
\]
This means that \( \kappa_n \tiB + C \), being densely defined, generates
a~strongly continuous semigroup in \( W^{2,1}(S) \).
What is more, the operator \( D \) in~\eqref{eq:B-partition} is bounded, since
\( J \) and \( K \) are.
Hence, by the bounded perturbation theorem (see
e.g.~\cite[Proposition~III.1.12]{MR1721989}), the operator \( \tiB_n \)
generates a~strongly continuous semigroup in \( W^{2,1}(S) \), and so does its
isomorphic image \( \tiA_n^{*} \).
In particular \( (\lambda - \tiA_n^{*})(D(\tiA_n^{*})) = W^{2,1}(S) \) for
sufficiently large \( \lambda > 0 \).
\end{proof}

\begin{remark}
\label{rem:domain-dense}
We showed in the proof of Lemma~\ref{lem:range-cond} that \( \tiA_n^{*} \) is
the generator of a~strongly continuous semigroup in \( W^{2,1}(S) \).
In particular, the domain \( D(\tiA_n^{*}) \) of \( \tiA_n^{*} \) is dense in
\( W^{2,1}(S) \) equipped with the norm \( \normt{ \, \cdot \, }_{\omega} \).
The norm is stronger than the \( L^1 \)-type norm in \( W^{2,1}(S) \).
Therefore, since \( W^{2,1}(S) \) is dense in \( L^1(S) \), the domain of
\( A_n^{*} \), which contains \( D(\tiA_n^{*}) \), is dense in \( L^1(S) \).
\end{remark}

We are now ready to show that the resolvent of \( A_n^{*} \) exists, as claimed in
Lemma~\ref{lem:existence-of-dual-resolvent}.

\begin{proof}[Proof of Lemma~\textup{\ref{lem:existence-of-dual-resolvent}}]

By the remark stated after the lemma, it is enough to show that
\( \lambda - A_n^{*} \) is invertible, and that the norm of the inverse is
bounded by \( \lambda^{-1} \).

First we show that the operator \( A_n^{*} \) is dissipative, that is
\begin{equation}
\label{eq:dissipative}
\norm{ (\lambda - A_n^{*})\phi }_{L^1(S)} \geq \lambda \norm{ \phi }_{L^1(S)},
\qquad \phi \in D(A_n^{*})
\end{equation}
for all \( \lambda > 0 \).
Let \( A_n' \) be the adjoint operator of \( A_n \) in the dual space of
\( C(S) \).
That is \( A_n' \) acts in the space \( M_b(S) \) of regular Borel measures on
\( S \).
As we said in Section~\ref{sec:continuous-case}, \( A_n \) generates a~Feller
semigroup in \( C(S) \), hence \( \norm{ \e^{tA_n} }_{\lin(C(S))} \leq 1 \) for
all \( t \geq 0 \).
Therefore \( \norm{ (\lambda - A_n)^{-1} }_{\lin(C(S))} \leq \lambda^{-1} \) for
all \( \lambda > 0 \).
However, we know (see e.g.~\cite[Theorem~1.10.2]{MR710486}) that for each
\( \lambda \in \rho(A_n) \) it follows that \( \lambda \in \rho(A_n^{*}) \) and
the adjoint of \( (\lambda - A_n)^{-1} \) equals \( (\lambda - A_n')^{-1} \).
Consequently, since the norm of an operator is the same as the norm of its
adjoint,
\begin{equation}
\label{eq:res-estimate}
\norm{ (\lambda - A_n')^{-1} }_{\lin(M_b(S))} \leq \lambda^{-1}.
\end{equation}
Thus \( \norm{ (\lambda - A_n') \mu }_{M_b(S)} \geq \lambda \norm{ \mu
}_{M_b(S)} \) for all \( \mu \in D(A_n') \subset M_b(S) \).
Let \( \phi \in D(A_n^{*}) \) and denote by \( \mu_{\phi} \in M_b(S) \) the
measure corresponding to \( \phi \), that is the measure defined by
\( \mu_{\phi}(E) \coloneqq \int_E \phi \) for any Borel measurable set
\( E \subset S \).
We have
\[
(A_n' \mu_{\phi})f = \int_S A_n f \dd \mu_{\phi} = \int_S \phi A_n f = \int_S f
A_n^{*} \phi, \qquad f \in D(A),
\]
where in the last equality we used Lemma~\ref{lem:dual-semigroup}.
Hence we may write, with slight abuse of notation,
\( A_n' \mu_{\phi} = A_n^{*} \phi \).
This means that \( (\lambda - A_n')\mu_{\phi} = (\lambda - A_n^{*}) \phi \) for
all \( \phi \in D(A_n^{*}) \), and~\eqref{eq:dissipative} follows
by~\eqref{eq:res-estimate}.

Since \( A_n^{*} \) is dissipative, we are left with proving that
\( \lambda - A_n^{*} \) is surjective for some (hence all) \( \lambda > 0 \).
Since \( A_n' \) is closed and \( L^1(S) \) is a~closed subspace of
\( M_b(S) \), the operator \( A_n^{*} \) is also closed.
Hence, see e.g.~\cite[Proposition~II.3.14(iii)]{MR1721989}, the range of
\( \lambda - A_n^{*} \) is closed in \( L^1(S) \).
However, by Lemma~\ref{lem:range-cond}, for sufficiently large \( \lambda > 0 \)
the range contains \( W^{2,1}(S) \), which is dense in \( L^1(S) \).
Hence the range equals \( L^1(S) \).
\end{proof}



\begin{proof}[Proof of Theorem~\textup{\ref{thm:dual-is-Markov}}]
The domain of \( A_n^{*} \) is dense in \( L^1(S) \) (see
Remark~\ref{rem:domain-dense}), hence by
Lemma~\ref{lem:existence-of-dual-resolvent} it follows that \( A_n^{*} \) is the
generator of a~strongly continuous semigroup in \( L^1(S) \).

It is well known, see e.g.~\cite[Corollary~7.8.1]{lasota94}, that
\( \sem{\e^{tA_n^{*}}}_{t \geq 0} \) is sub-Markov, provided that the operator
\( \lambda(\lambda - A_n^{*})^{-1} \) is sub-Markov for all \( \lambda > 0 \).

We prove that if \( \phi \in L^1(S) \) and \( \phi \geq 0 \), then
\( (\lambda - A_n^{*})^{-1} \phi \geq 0\) for every \( \lambda > 0 \).
Let \( m \) be the Lebesgue measure on \( S \), and suppose, contrary to our
claim, that there exists a function \( \phi \geq 0 \), a set
\( \Gamma \subset S \) with \( m(\Gamma) > 0 \), and a real number
\( \delta > 0 \) such that for some \( \lambda_0 > 0 \) we have
\( (\lambda_0 - A_n^{*})^{-1} \phi \leq -\delta \) almost everywhere on
\( \Gamma \).
Without loss of generality, we may assume that \( \Gamma \) is a~subset of some
edge \( E_i \).
Then, for a given \( \epsilon > 0 \), we choose an open set \( G \subset E_i \)
and a~closed set \( \Gamma' \) such that \( \Gamma' \subset \Gamma \subset G \)
and \( m(G \setminus \Gamma') < \epsilon \).
By the Urysohn lemma, there exists a continuous real function
\( 0 \leq f \leq 1 \) with \( f \equiv 1 \) on \( \Gamma' \) and
\( f \equiv 0 \) outside \( G \).
Then
\begin{align*}
  \int_S f(\lambda_0 - A_n^{*})^{-1}\phi &= \int_{\Gamma'} f(\lambda_0 -
                                           A_n^{*})^{-1}\phi + \int_{G \setminus \Gamma'} f(\lambda_0 - A_n^{*})^{-1}\phi\\
                                         &\leq -\delta m(\Gamma') + \int_{G \setminus \Gamma'} f(\lambda_0 - A_n^{*})^{-1}\phi.
\end{align*}
Since \( \epsilon \) is arbitrary small, it follows that the left-hand side is
strictly negative.
However, by Lemma~\ref{lem:existence-of-dual-resolvent},
\[
\int_S f(\lambda_0 - A_n^{*})^{-1}\phi = \int_S \phi (\lambda_0 - A_n)^{-1} f
\geq 0,
\]
where the inequality is a~consequence of the fact that \( A_n \) generates
a~Feller semigroup.
This leads to contradiction and proves that \( (\lambda - A_n^{*})^{-1} \) is
a~positive operator for each \( \lambda > 0 \).

In order to prove the sub-Markov property, let \( \phi \in L^1(S) \).
Since \( A_n \) generates a Feller semigroup, we have
\begin{equation}
\label{eq:Feller-resolvent-bound}
(\lambda - A_n)^{-1} \mathbbm{1}_S = \int_0^{\infty} \e^{-\lambda t} \e^{t A_n}
\mathbbm{1}_S \dd t \leq \mathbbm{1}_S \int_0^{\infty} \e^{-\lambda t} \dd t =
\lambda^{-1} \mathbbm{1}_S, \qquad \lambda > 0,
\end{equation}
where \( \mathbbm{1}_S \equiv 1 \) on \( S \).
Thus, by Lemma~\ref{lem:existence-of-dual-resolvent},
\[
\int_S \lambda (\lambda - A_n^{*})^{-1} \phi = \int_S \phi \lambda (\lambda -
A_n)^{-1} \mathbbm{1}_S \leq \int_S \phi, \qquad \lambda > 0,\ \phi \in L^1(S),
\]
which completes the first part of the proof.

If we assume that the semigroup generated by \( A_n \) is conservative, then
inequality in~\eqref{eq:Feller-resolvent-bound} becomes equality, and
\( \int_S \lambda (\lambda - A_n^{*})^{-1} \phi = \int_S \phi \) for all
\( \lambda > 0 \) and \( \phi \in L^1(S) \).
\end{proof}

\subsection{Convergence in \texorpdfstring{\( L^1(S) \)}{L1(S)}}
\label{sec:convergence}

To prove a~convergence result that resembles Theorem~\ref{thm:convergence-in-C},
we begin with a~theorem due to Kurtz (see~\cite[Theorem~7.6]{MR838085}
or~\cite[Theorem~42.2]{bobrowski-convergence}).

For each \( n \in \N \) let \( \cl A_n \) be the generator of a~strongly
continuous semigroup \( \sem{ \e^{t \cl A_n} }_{ t \geq 0 } \) in a~Banach space
\( \X \).
Assume that the semigroups are equibounded, that is
\[
\norm{ \e^{t \cl A_n} }_{\lin(\X)} \leq C, \qquad n \in \N,\ t \geq 0
\]
for some \( C > 0 \).
Denote by \( \cAex \) the \emph{extended limit} of \( \seq{\ca_n}_{n \in \N} \),
that is the multivalued operator in \( \X \) with the domain \( D(\cAex) \)
composed of all \( x \in \X \) such that there exists a~sequence
\( \seq{x_n}_{n \in \N} \) in \( \X \) that converges to \( x \) while the limit
of \( \ca_n x_n \) exists as \( n \to +\infty \).
By \( (x,y) \in \cAex \) we mean that \( x \in D(\cAex) \) and
\( \lim_{n \to +\infty} \ca_n x_n = y \) for some sequence
\( \seq{x_n}_{n \in \N} \) in \( D(\cAex) \) converging to \( x \).
Moreover, assume that \( \seq{\epsilon_n}_{n \in \N} \) is a sequence of
positive real numbers converging to \( 0 \), and denote by \( \cBex \) the
extended limit of \( \seq{\epsilon_n \ca_n}_{n \in \N} \).

Suppose also that an operator \( \cB \) with domain \( D(\cB) \) generates
a~strongly continuous semigroup \( \sem{\e^{t \cB}}_{t \geq 0} \) in \( \X \)
such that \( \norm{ \e^{t\cB} }_{\lin(\X)} \leq C \), and that for every
\( x \in \X \) the limit
\begin{equation}
\label{eq:Q-limit}
\lim_{\lambda \to 0^+} \lambda (\lambda - \cB)^{-1} x \eqqcolon \cP x
\end{equation}
exists.
The operator \( \cP \) is a~bounded projection, hence its range, which we denote
by
\[
\Y \coloneqq \range P,
\]
is a~closed subspace of \( \X \).
With this setup we use a~special case of Kurtz's theorem (for a~general version
see~\cite[Theorem~7.6]{MR838085}).

\begin{theorem}
\label{thm:Kurtz}
Let \( \mathcal{A} \) be an operator in \/\( \X \) such that \/\( \Y \) is
a~subset of its domain.
Assume that
\begin{enumerate}
  \item\label{item:Kurtz1} if \( x \in \Y \), then \( (x, \ca x) \in \cAex \),
  \item\label{item:Kurtz2} if \( y \in D(\cB) \), then
\( (y, \cB y) \in \cBex \),
  \item\label{item:Kurtz3} the operator \( \cP \ca \) with domain \( \Y \)
generates a~strongly continuous semigroup in \/\( \Y \).
\end{enumerate}
Then for every \( x \in \X \) and \( t > 0 \),
\[
\lim_{n \to +\infty} \e^{t\cl A_n} x = \e^{t \cP \ca} \cP x
\]
in \( \X \), and the convergence is uniform on compact subsets of
\( (0,\infty) \).
If \( x \in \Y \), then the formula holds also for \( t = 0 \), and the
convergence is uniform on compact subsets of \( [0,+\infty) \).
\end{theorem}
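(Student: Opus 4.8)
The plan is to obtain the assertion as a special case of the general form of Kurtz's theorem, \cite[Theorem~7.6]{MR838085} (see also \cite[Theorem~42.2]{bobrowski-convergence}), by matching the data \( \{\ca_n\}_{n\in\N} \), \( \{\epsilon_n\ca_n\}_{n\in\N} \), \( \cB \), \( \cP \), \( \Y \), and \( \ca \) with the ingredients of that theorem and checking that hypotheses \ref{item:Kurtz1}--\ref{item:Kurtz3}, together with the standing assumptions (equiboundedness of \( \{\e^{t\ca_n}\}_{t\geq0} \), \( \epsilon_n\to0 \), and the existence of \( \cP \) in~\eqref{eq:Q-limit}), are precisely the input it requires. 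The structural picture to keep in mind is that \( \cB \), whose Abel means converge to the projection \( \cP \), generates the \emph{fast} motion, \( \Y=\range\cP \) is its space of equilibria, and \( \cP\ca \) is the \emph{slow}, effective generator obtained by averaging \( \ca \) over these equilibria. In this dictionary, condition \ref{item:Kurtz2} says that the fast motion is consistent with \( \{\epsilon_n\ca_n\}_{n\in\N} \); condition \ref{item:Kurtz1} provides, for each \( x\in\Y \), approximating sequences \( x_n\to x \) with \( \ca_nx_n\to\ca x \) along which the slow generator can be recognised; and condition \ref{item:Kurtz3} makes \( \e^{t\cP\ca}\cP \) a genuine (degenerate) semigroup of bounded operators on \( \X \), equal to \( \cP \) at \( t=0 \).

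The underlying argument, which I would follow, is carried out at the level of resolvents. Fix \( \lambda>0 \) and \( x\in\X \), and set \( x_n\coloneqq(\lambda-\ca_n)^{-1}x \); equiboundedness gives \( \norm{x_n}_{\X}\leq C\lambda^{-1}\norm{x}_{\X} \), while \( \epsilon_n\ca_nx_n=\epsilon_n(\lambda x_n-x)\to0 \). Since condition \ref{item:Kurtz2} and the fact that \( \cB \) generates allow the Trotter--Kato theorem to be applied to \( \{\epsilon_n\ca_n\}_{n\in\N} \) (whose time-scaled semigroups are equibounded), the resolvents \( (\mu-\epsilon_n\ca_n)^{-1} \) converge strongly to \( (\mu-\cB)^{-1} \) for \( \mu>0 \); passing to the limit in the identity \( x_n=(\mu-\epsilon_n\ca_n)^{-1}(\mu x_n-\epsilon_n\ca_nx_n) \) shows that every limit point \( y \) of \( (x_n)_{n\in\N} \) satisfies \( y=\mu(\mu-\cB)^{-1}y \) for all \( \mu>0 \), hence \( y=\cP y\in\Y \) by~\eqref{eq:Q-limit}. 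Testing the resolvent equation \( \lambda x_n-\ca_nx_n=x \) against the approximating sequences of condition \ref{item:Kurtz1} then identifies the equation for \( y \) on \( \Y \) as \( (\lambda-\cP\ca)y=\cP x \), which by condition \ref{item:Kurtz3} has the unique solution \( y=(\lambda-\cP\ca)^{-1}\cP x \); therefore \( (\lambda-\ca_n)^{-1}x\to(\lambda-\cP\ca)^{-1}\cP x \) for every \( x\in\X \). The operators \( \lambda\mapsto(\lambda-\cP\ca)^{-1}\cP \) form a pseudoresolvent with range \( \Y \), namely the Laplace transform of the degenerate semigroup \( \e^{t\cP\ca}\cP \), so a degenerate version of the Trotter--Kato theorem upgrades the resolvent convergence to \( \lim_{n}\e^{t\ca_n}x=\e^{t\cP\ca}\cP x \), uniformly on compact subsets of \( (0,\infty) \); and if \( x\in\Y \) then \( \cP x=x \), there is no initial layer, and the convergence is uniform on compact subsets of \( [0,\infty) \).

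The step I expect to be the main obstacle is passing from boundedness of \( (x_n)_{n\in\N} \) to control of its limit points: in the non-reflexive space \( L^1(S) \) weak subsequential limits are not available, so one must either work entirely inside the machinery of \cite[Theorem~7.6]{MR838085}, which is arranged to avoid compactness arguments, or exploit extra structure of the concrete problem, such as the equivalent Bielecki norms and the explicit resolvent formulas of Section~\ref{sec:sect-lap}, to obtain norm-precompactness of \( (x_n)_{n\in\N} \) directly. Once the dictionary above is in place, and bearing in mind that conditions \ref{item:Kurtz1}--\ref{item:Kurtz3} are tailored to be exactly what is needed, the conclusion is the cited theorem.
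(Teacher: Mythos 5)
Your proposal is correct and takes the same route as the paper: the paper gives no proof of this statement at all, presenting it purely as a special case of \cite[Theorem~7.6]{MR838085} (equivalently \cite[Theorem~42.2]{bobrowski-convergence}) with the dictionary \( \ca_n \), \( \epsilon_n\ca_n \), \( \cB \), \( \cP \), \( \Y=\range\cP \), \( \cP\ca \) exactly as you set it up, and your identification of hypotheses \ref{item:Kurtz1}--\ref{item:Kurtz3} with the inputs of that theorem is the whole argument. Your supplementary sketch of the internal resolvent-level mechanism is not needed for this deduction and, as you yourself note, its limit-point step would not survive in \( L^1(S) \); the honest resolution is the one you give, namely to rely on the cited theorem's corrector-based machinery rather than on compactness.
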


In order to verify conditions~\ref{item:Kurtz1}-\ref{item:Kurtz3} of Kurtz's
theorem we need some lemmas.
Recall that for each \( n \in \N \) the operator \( A_n^{*} \) defined
by~\eqref{eq:A-dual} with transmission
conditions~\eqref{eq:transmission-cond-F}, generates a~strongly continuous
semigroup in \( L^1(S) \).
By \( \Aex \) we denote the extended limit of \( \seq{A_n^{*}}_{n \in \N} \).
Moreover, for \( B \) defined by~\eqref{eq:B-def}, it follows from
Proposition~\ref{prop:convergence-res} that the limit
\[
\lim_{\lambda \to 0^+} \lambda(\lambda - B)^{-1} \phi \coloneqq P\phi
\]
exists for every \( \phi \in L^1(S) \), and that
\begin{equation}
\label{eq:P-formula}
P\phi = \lrp[\Big]{ d_i^{-1} \int_{E_i} \phi_i }_{i \in \cn}, \qquad \phi =
(\phi_i)_{i \in \cn} \in L^1(S).
\end{equation}
The range of \( P \) is the closed subspace of \( L^1(S) \) consisting of all
functions that are constant on each edge.
We denote this subspace by \( L_0^1(S) \), and note that it is isometrically
isomorphic to \( \R^N \) equipped with the appropriate norm.

\begin{lemma}
\label{lem:condition-1-Kurtz}
The domain \( D(\Aex) \) contains \( L_0^1(S) \), and for the operator \( K \)
defined by~\eqref{eq:K-def} we have
\[
(\phi,\sigma K \phi) \in \Aex, \qquad \phi \in L_0^1(S).
\]
\end{lemma}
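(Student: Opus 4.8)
The goal is to show that for each $\phi \in L_0^1(S)$ there is a sequence $\phi_n \to \phi$ in $L^1(S)$ with $\phi_n \in D(A_n^{*})$ and $A_n^{*}\phi_n \to \sigma K\phi$. The natural candidate, following the construction of the isomorphism $I_n$ in the proof of Lemma~\ref{lem:range-cond}, is to put $\phi_n \coloneqq I_n^{-1}\phi = \phi + \kappa_n^{-1} J\phi$. Since $\phi \in L_0^1(S) \subset W^{2,1}(S)$ and $J$ is a bounded operator on $W^{2,1}(S)$, we have $\phi_n \in W^{2,1}(S)$, and by~\eqref{eq:domain-isomorphism} the relation $I_n\phi_n = \phi \in W^{2,1}(S)$ with $\phi$ constant on each edge (hence in $D(B)$, as its edgewise derivatives vanish) gives $\phi_n \in D(A_n^{*})$. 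Moreover $\phi_n \to \phi$ in $W^{2,1}(S)$, hence in $L^1(S)$, because $\kappa_n^{-1} \to 0$ and $J\phi$ is a fixed element of $W^{2,1}(S)$.

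It remains to compute the limit of $A_n^{*}\phi_n = \kappa_n \sigma \phi_n''$. Here I would use the identity established in the proof of Lemma~\ref{lem:range-cond}, namely $\tiA_n^{*} I_n^{-1} = \kappa_n \tiB + \sigma K$ on $D(\tiB)$; applied to $\phi$ (which lies in $D(B)$ because it is constant on each edge, so $\phi'' = 0$ and both Neumann conditions hold) this yields
\[
A_n^{*}\phi_n = A_n^{*} I_n^{-1}\phi = \kappa_n B\phi + \sigma K\phi = \sigma K\phi,
\]
since $B\phi = \sigma\phi'' = 0$. Thus $A_n^{*}\phi_n = \sigma K\phi$ for every $n$, and in particular $A_n^{*}\phi_n \to \sigma K\phi$ trivially. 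This shows $(\phi, \sigma K\phi) \in \Aex$ and simultaneously that $L_0^1(S) \subseteq D(\Aex)$.

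The one point requiring care — and the place where I expect the only real friction — is justifying that $\phi \in L_0^1(S)$ genuinely belongs to $D(B)$ rather than merely to $W^{2,1}(S)$: a function constant on each edge $E_i$ has zero weak derivative on the open edge, so $\phi'(L_i) = \phi'(R_i) = 0$ in the appropriate one-sided sense and $B\phi = 0$, placing $\phi$ in $D(G_i)$ for each $i$ and hence in $D(B)$. Once this is in hand, the computation $\tiA_n^{*} I_n^{-1} = \kappa_n\tiB + \sigma K$ from the previous proof does all the work, and no new estimates are needed. (If one prefers to avoid invoking the operator identity on $W^{2,1}(S)$, one can instead differentiate $\phi_n = \phi + \kappa_n^{-1}J\phi$ directly: $\phi_n'' = \kappa_n^{-1}(J\phi)''$, and by definition of $J$ and $K$ one has $(J\phi)'' = K\phi$ on each edge up to the vanishing contribution of $\phi''$, so $\kappa_n\sigma\phi_n'' = \sigma K\phi$; this is the same statement unwound.)
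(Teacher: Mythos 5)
Your proposal is correct and is essentially the paper's own proof: the paper also takes $\phi_n \coloneqq I_n^{-1}\phi = \phi + \kappa_n^{-1}J\phi$, notes $\phi_n \in D(A_n^{*})$ via~\eqref{eq:domain-isomorphism} and $\phi_n \to \phi$, and computes $A_n^{*}\phi_n = \kappa_n\sigma\phi'' + \sigma(J\phi)'' = \sigma K\phi$ directly. Your framing through the identity $\tiA_n^{*}I_n^{-1} = \kappa_n\tiB + \sigma K$ is, as you say yourself, the same computation unwound, and your extra care about $\phi \in D(B)$ is a harmless refinement.
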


\begin{proof}
Fix \( \phi \in L_0^1(S) \) and set (see~\eqref{eq:inverse-of-isomorphism})
\[
\phi_n \coloneqq I_{n}^{-1} \phi = \phi + \kappa_n^{-1} J\phi.
\]
Since the operator \( J \) is bounded in \( W^{2,1}(S) \) and
\( \kappa_n^{-1} \to 0 \) as \( n \to +\infty \), the sequence
\( \seq{\phi_n}_{n \in \N} \) converges to \( \phi \) in \( L^1(S) \) as
\( n \to +\infty \).
What is more \( \phi_n \in D(A_n^{*}) \) for every \( n \in \N \)
by~\eqref{eq:domain-isomorphism}, and
\[
A_n^{*} \phi_n = \kappa_n \sigma \phi'' + \sigma (J \phi)'' = \sigma K \phi.
\]
Hence \( (\phi,\sigma K \phi) \in \Aex \), which completes the proof.
\end{proof}

For the next lemma let \( \Bex \) be the extended limit of
\( \seq{\kappa_n^{-1} A_n^{*}}_{n \in \N} \).

\begin{lemma}
\label{lem:condition-2-Kurtz}
For every \( \phi \in D(B) \) we have
\[
(\phi, B \phi) \in \Bex.
\]
\end{lemma}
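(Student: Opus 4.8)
The plan is to produce, for a given $\phi \in D(B)$, an approximating sequence $\phi_n \in D(A_n^{*})$ with $\phi_n \to \phi$ in $L^1(S)$ and $\kappa_n^{-1} A_n^{*}\phi_n \to B\phi$. The natural candidate, mirroring the proof of Lemma~\ref{lem:condition-1-Kurtz}, is $\phi_n \coloneqq I_n^{-1}\phi = \phi + \kappa_n^{-1} J\phi$. Since $J$ is bounded on $W^{2,1}(S)$ (hence the $W^{2,1}$-norms of $J\phi$ are uniformly controlled) and $\kappa_n^{-1} \to 0$, we immediately get $\phi_n \to \phi$ in $W^{2,1}(S)$ and a fortiori in $L^1(S)$. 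By~\eqref{eq:domain-isomorphism} we have $\phi_n \in D(A_n^{*})$, because $I_n\phi_n = \phi \in D(B) \subset W^{2,1}(S)$ with Neumann boundary conditions on each edge.

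The second step is the computation of $A_n^{*}\phi_n$. Using $A_n^{*}\phi_n = \kappa_n\sigma\phi_n''$ together with $\phi_n = \phi + \kappa_n^{-1}J\phi$, we obtain
\[
A_n^{*}\phi_n = \kappa_n\sigma\phi'' + \sigma(J\phi)'' = \kappa_n B\phi + \sigma K\phi,
\]
where the identification of $\sigma(J\phi)''$ with $\sigma K\phi$ is exactly the definition~\eqref{eq:K-def} of $K$ (the second derivative acts only on the fixed smooth functions $h_{L,i}, h_{R,i}$, the functional values $F_{L,i}\phi, F_{R,i}\phi$ being constants). Therefore
\[
\kappa_n^{-1} A_n^{*}\phi_n = B\phi + \kappa_n^{-1}\sigma K\phi.
\]

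Finally, since $K$ is a bounded operator on $W^{2,1}(S)$ and $\sigma$ is a fixed bounded multiplier, $\sigma K\phi$ has finite $L^1(S)$-norm, so $\kappa_n^{-1}\sigma K\phi \to 0$ in $L^1(S)$ as $n \to +\infty$. Consequently $\kappa_n^{-1} A_n^{*}\phi_n \to B\phi$ in $L^1(S)$, which is precisely the assertion $(\phi, B\phi) \in \Bex$. I do not anticipate a genuine obstacle here: the only point requiring a little care is making sure $\phi_n$ lands in $D(A_n^{*})$, and this is handed to us by the isomorphism property~\eqref{eq:domain-isomorphism}; everything else is the same bounded-perturbation bookkeeping already used in Lemma~\ref{lem:condition-1-Kurtz}, now with the roles of the $\kappa_n$ and $\kappa_n^{-1}$ scalings interchanged.
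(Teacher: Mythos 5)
Your proposal is correct and follows essentially the same route as the paper: take \( \phi_n = I_n^{-1}\phi = \phi + \kappa_n^{-1}J\phi \), use~\eqref{eq:domain-isomorphism} to place \( \phi_n \) in \( D(A_n^{*}) \), and observe that \( \kappa_n^{-1}A_n^{*}\phi_n = B\phi + \kappa_n^{-1}\sigma (J\phi)'' \to B\phi \) since \( J \) is bounded. The only cosmetic difference is that you name the remainder \( \sigma K\phi \) via~\eqref{eq:K-def}, whereas the paper leaves it as \( \sigma(J\phi)'' \); the argument is identical.
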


\begin{proof}
Let \( \phi \in D(B) \) and set \( \phi_n \coloneqq I_n^{-1} \phi \).
Then \( \phi_n \in D(A_n^{*}) \) for all \( n \in \N \)
by~\eqref{eq:domain-isomorphism}.
As in the previous lemma \( \seq{\phi_n}_{n \in \N} \) converges to \( \phi \)
in \( L^1(S) \) as \( n \to +\infty \), and
\[
\kappa_n^{-1} A_n^{*} \phi_n = \sigma \phi'' + \kappa_n^{-1} \sigma (J\phi)'' =
B\phi + \kappa_n^{-1} \sigma (J\phi)''.
\]
Since \( \lim_{n \to +\infty} \kappa_n^{-1} = 0 \) and since \( J \) is bounded,
it follows that \( (\phi,B\phi) \in \Bex \), as claimed.
\end{proof}

We are now ready to apply Theorem~\ref{thm:Kurtz}.
In \( L_0^1(S) \) we define the operator \( Q \) by
\[
Q \phi \coloneqq \sigma PK \phi, \qquad \phi \in L_0^1(S),
\]
where \( K \) is given by~\eqref{eq:K-def}.
Observe that for all \( \phi = (\phi_i)_{i \in \cn} \in W^{2,1}(S) \), we have
\[ 
PK \phi = \lrp[\Big]{ d_i^{-1} \int_{E_i} \lrp[\big]{ (F_{L,i} \phi) h_{L,i}'' +
    (F_{R,i} \phi) h_{R,i}'' } }_{i \in \cn} = (d_i^{-1} F_{R,i} \phi - d_i^{-1}
F_{L,i} \phi)_{i \in \cn}.
\]
The last equality follows by~\eqref{eq:h-derivative}.
Denoting by \( I_i^E \) the set of indexes \( j \neq i \) of edges incident to
\( E_i \), it follows
by~\eqref{eq:transition-conditions-dual1}--\eqref{eq:transmission-cond-F}, that
for every \( \phi \in L_0^1(S) \) we have
\begin{equation}
\label{eq:sigmaPK}
\begin{split}
Q \phi &= \lrp[\Big]{ d_i^{-1} \sum_{j \in I_i^E} \lrb[\big]{ \sigma_j l_{ji}
    \phi(L_j) + \sigma_j r_{ji} \phi(R_j) } - \sigma_i d_i^{-1} \lrb[\big]{ l_i
    \phi(L_i) + r_i \phi(R_i) } }_{i \in \cn}\\
&= \lrp[\Big]{ d_i^{-1} \sum_{j \neq i} \sigma_j (l_{ji} + r_{ji}) \phi_j -
  \sigma_i d_i^{-1} (l_i + r_i) \phi_i }_{i \in \cn},
\end{split}
\end{equation}
where \( \phi_j \) is the value of \( \phi \) on the edge \( E_j \).
We introduce the matrix \( (q_{ij})_{i,j \in \cn} \) by
\[
q_{ij} \coloneqq \sigma_j d_i^{-1} (l_{ji} + r_{ji}), \qquad i \neq j,
\]
and
\[
q_{ii} \coloneqq -\sigma_i d_i^{-1} (l_i + r_i).
\]
Then
\begin{equation}
\label{eq:Q-formula}
Q\phi = \lrp[\Big]{ \sum_{j \neq i} q_{ij} \phi_j + q_{ii} \phi_i }_{i \in \cn},
\qquad \phi = (\phi_i)_{i \in \cn} \in L_0^1(S)
\end{equation}
and the operator \( Q \) may be identified with the matrix
\( (q_{ij})_{i,j \in \cn} \).
(Notice the difference between the matrix defined here and the matrix from
Theorem~\ref{thm:convergence-in-C}.)
The operator \( Q \), since the matrix \( (q_{ij})_{i,j \in \cn} \) is finite,
generates strongly continuous semigroup \( \sem{\e^{tQ}}_{t \geq 0} \) in
\( L_0^1(S) \).

\begin{theorem}
\label{thm:convergence-of-dual}
For each \( n \in \N \) let the operator \( A_n^{*} \) be defined
by~\eqref{eq:A-dual} with domain composed of functions \( \phi \in W^{2,1}(S) \)
satisfying boundary conditions~\eqref{eq:transmission-cond-F}.
Then, for \( P \) and \( Q \) defined by~\eqref{eq:P-formula}
and~\eqref{eq:Q-formula}, respectively, we have
\begin{equation}
\label{eq:limit-semigroup-in-l1}
\lim_{n \to +\infty} \e^{tA_n^{*}} \phi = \e^{tQ} P\phi, \qquad \phi \in L^1(S), \ t >
0
\end{equation}
in \( L^1(S) \).
The convergence is uniform on compact subsets of \( (0,\infty) \).
If \( \phi \in L_0^1(S) \), then~\eqref{eq:limit-semigroup-in-l1} holds also for
\( t = 0 \), and the convergence is uniform on compact subsets of
\( [0,+\infty) \).
\end{theorem}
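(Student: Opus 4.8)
The plan is to verify the three hypotheses of Kurtz's theorem (Theorem~\ref{thm:Kurtz}) with the identifications $\cl A_n = A_n^{*}$, $\X = L^1(S)$, $\cB = B$, $\cP = P$, $\Y = L_0^1(S)$, $\ca = \sigma K$ restricted to $L_0^1(S)$, and $\epsilon_n = \kappa_n^{-1}$. Before invoking the theorem one must check the standing assumptions: the semigroups $\set{\e^{tA_n^{*}}}_{t \geq 0}$ are equibounded (in fact contractive by Theorem~\ref{thm:dual-is-Markov} together with the contractivity estimate in Lemma~\ref{lem:existence-of-dual-resolvent}, via the Hille--Yosida theorem), the semigroup $\set{\e^{tB}}_{t \geq 0}$ is a contraction semigroup (each $G_i$ generates a contraction semigroup by the estimate~\eqref{eq:laplacian-res-est} combined with Hille--Yosida; strictly the $L^1$ bound is $M$, but on each interval $G_i$ with Neumann conditions is a conservative Markov generator so one gets a genuine contraction), and the limit defining $P$ exists for every $\phi \in L^1(S)$, which is exactly Proposition~\ref{prop:convergence-res} applied edgewise, giving the formula~\eqref{eq:P-formula} and the fact that $P$ is a bounded projection onto $L_0^1(S)$.

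Condition~\ref{item:Kurtz1} — that $(\phi, \sigma K \phi) \in \Aex$ for every $\phi \in \Y = L_0^1(S)$ — is precisely the content of Lemma~\ref{lem:condition-1-Kurtz}, so nothing further is needed there except to observe that $\ca \phi := \sigma K\phi$ makes sense for $\phi \in L_0^1(S)$ (it does, since $L_0^1(S) \subset W^{2,1}(S)$, where $K$ is defined, and in fact $K\phi$ for constant $\phi$ is a fixed linear combination of the $h_{L,i}'', h_{R,i}''$ weighted by the values of $\phi$ on the incident edges). Condition~\ref{item:Kurtz2} — that $(\phi, B\phi) \in \Bex$ for every $\phi \in D(B)$, where $\Bex$ is the extended limit of $\set{\kappa_n^{-1} A_n^{*}}$ — is exactly Lemma~\ref{lem:condition-2-Kurtz}.

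Condition~\ref{item:Kurtz3} requires that $P\ca = P \sigma K$, with domain $L_0^1(S)$, generate a strongly continuous semigroup in $L_0^1(S)$. This is where the explicit computation pays off: the displayed identity $PK\phi = (d_i^{-1}F_{R,i}\phi - d_i^{-1}F_{L,i}\phi)_{i \in \cn}$ (using~\eqref{eq:h-derivative}) together with~\eqref{eq:transition-conditions-dual1}--\eqref{eq:transmission-cond-F} shows that $P\sigma K$ restricted to $L_0^1(S)$ coincides with the operator $Q$ represented by the finite matrix $(q_{ij})_{i,j \in \cn}$ of~\eqref{eq:Q-formula}; since $L_0^1(S) \cong \R^N$ is finite-dimensional and $Q$ is a bounded (matrix) operator, it trivially generates a strongly continuous — indeed norm-continuous — semigroup $\set{\e^{tQ}}_{t\geq 0}$. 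Thus all three hypotheses hold and Kurtz's theorem yields $\lim_{n\to+\infty}\e^{tA_n^{*}}\phi = \e^{tP\ca}P\phi = \e^{tQ}P\phi$ for every $\phi \in L^1(S)$ and $t > 0$, uniformly on compact subsets of $(0,\infty)$, with the extension to $t = 0$ when $\phi \in L_0^1(S)$, which is the assertion~\eqref{eq:limit-semigroup-in-l1}.

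The only genuine obstacle is bookkeeping rather than mathematics: one must make sure that the operator called $\ca$ in Theorem~\ref{thm:Kurtz} (an abstract operator with $\Y$ in its domain) really is the concrete $\sigma K|_{L_0^1(S)}$, and that $P \ca = Q$ as operators on $L_0^1(S)$ — this is the algebraic identity carried out just before the statement, where one changes the order of summation and uses the no-loops convention encoded in the primed sums to pass from the first to the second line of~\eqref{eq:sigmaPK}. Everything else is an immediate citation of the lemmas and propositions already established, so the proof is essentially a verification that the machinery assembled in this section matches the template of Kurtz's theorem verbatim.
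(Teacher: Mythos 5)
Your proposal is correct and follows essentially the same route as the paper: it instantiates Kurtz's theorem with \( \X = L^1(S) \), \( \ca_n = A_n^{*} \), \( \cB = B \), \( \ca = \sigma K \), \( \epsilon_n = \kappa_n^{-1} \), cites Lemmas~\ref{lem:condition-1-Kurtz} and~\ref{lem:condition-2-Kurtz} for conditions \ref{item:Kurtz1} and \ref{item:Kurtz2}, and uses the identity \( P\sigma K|_{L_0^1(S)} = Q \) (a finite matrix, hence a generator) for condition \ref{item:Kurtz3}. Your additional remarks on equiboundedness of the semigroups and on the existence of the limit defining \( P \) are sound and merely make explicit what the paper leaves implicit.
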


\begin{proof}
Let \( \X \coloneqq L^1(S) \), \( \ca_n \coloneqq A_n^{*} \),
\( \ca \coloneqq \sigma K \), \( \cB \coloneqq B \), and
\( \epsilon_n \coloneqq \kappa_n^{-1} \).
Then \( \cP \) defined by~\eqref{eq:Q-limit} equals \( P \).
By Lemma~\ref{lem:condition-1-Kurtz} and Lemma~\ref{lem:condition-2-Kurtz}
condistions~\ref{item:Kurtz1} and~\ref{item:Kurtz2} from Kurtz's theorem are
satisfied.
Moreover, \( \cP \ca \) with domain \( \Y \) equals \( Q \).
Therefore, the claim follows by Theorem~\ref{thm:Kurtz}.
\end{proof}

\section{Analysis in \texorpdfstring{\( L^2(S) \)}{L2(S)}}
\label{sec:convergence--l2}

Here we consider a~similar problem as in Section~\ref{sec:analysis--l1}, however
we change the space \( L^1(S) \) to \( L^2(S) \).
Naturally,
\[
L^2(S) \coloneqq \set{ u\colon u = (u_i)_{i \in \cn},\ u_i \in L^2(E_i) },
\]
where \( L^2(E_i) \) is the complex Hilbert space of (equivalence classes of)
Lebesgue square integrable complex functions on \( E_i \), and the latter space
is isometrically isomorphic to the standard \( L^2(0,d_i) \) (see remarks at the
beginning of Section~\ref{sec:adjoint-semigroup}).
In contradistinction to \( L^1(S) \), we denote elements of \( L^2(S) \) by
\( u \) and \( v \).
The space \( L^2(S) \) equipped with the scalar product
\[
\scl{u,v}_{L^2(S)} \coloneqq \int_S u \conj{v} = \sum_{i \in \cn} \int_{E_i} u_i
\conj{v_i} = \sum_{i \in \cn} \scl{u_i,v_i}_{L^2(E_i)}
\]
is a~complex Hilbert space.
By \( H^1(S) \) we denote the Sobolev space \( W^{1,2}(S) \subset L^2(S) \),
that is \( u \in H^1(S) \) if and only if \( u \in L^2(S) \), \( u \) is weakly
differentiable and \( u' \in L^2(S) \).
Similarly we define \( H^2(S) = W^{2,2}(S) \) as the space of \( u \in L^2(S) \)
such that \( u \) and \( u' \) are weakly differentiable, and
\( u', u'' \in L^2(S) \).

For each \( n \in \N \) we define the operator \( A_n^{*} \) in \( L^2(S) \)
similarly as in Section~\ref{sec:convergence}, that is
\[
A_n^{*} u \coloneqq \kappa_n \sigma u'', \qquad u \in D(A_n^{*}),
\]
where \( D(A_n^{*}) \) is the set of function \( u \in H^2(S) \) such that
transmission conditions~\eqref{eq:transmission-cond-F} hold.
Here we consider \( F_{L,j} \) and \( F_{R,j} \) as functionals on \( H^2(S) \).
We prove in Theorem~\ref{thm:An-gen} that \( A_n^{*} \)'s generate holomorphic
semigroups in \( L^2(S) \) and, in Theorem~\ref{thm:agn-asymptotics},
investigate their asymptotics.

\subsection{Sesquilinear forms}
\label{sec:sesquilinear-forms}

In what follows we extensively use the theory of sesquilinear forms, see for
example~\cite[Chapter~6]{kato95} or~\cite[Chapter~1]{ouhabaz05}.
We recall that a~\emph{sesquilinear form} (or simply \emph{form}) in a~complex
Hilbert space \( (H,\scl{\cdot,\cdot}_H) \) is a~mapping
\( \fa\colon D(\fa) \times D(\fa) \to \C \) such that \( \fa(\cdot,u) \) is
linear and \( \fa(u,\cdot) \) is antilinear for all \( u \in D(\fa) \).
The set \( D(\fa) \) is a~linear subspace of \( H \) and is called the
\emph{domain} of \( \fa \).
We say that \( \fa \) is \emph{densely defined} if \( D(\fa) \) is a~dense set
in \( H \), \emph{accretive} if \( \re \fa(u,u) \geq 0 \) for each
\( u \in D(\fa) \), and \emph{closed} if \( D(\fa) \) is a~Hilbert space with
respect to the inner product
\( \scl{u,v}_{\fa} \coloneqq \re \fa(u,v) + \scl{u,v}_H \),
\( u,v \in D(\fa) \).
Moreover, we call \( \fa \) \emph{sectorial} if there exists \( M > 0 \) such
that
\begin{equation}
\label{eq:sectorial}
\abs{ \im \fa(u,u) } \leq M \re \fa(u,u), \qquad u \in D(\fa).
\end{equation}
If \( \seq{\fa_n}_{n \in \N} \) is a~sequence of forms in \( H \), then we say
that forms \( \fa_n \)'s are \emph{uniformly sectorial} if there exists
\( M > 0 \) (independent of \( n \)) such that~\eqref{eq:sectorial} holds with
\( \fa \) replaced by \( \fa_n \) for \( n \in \N \).
Also, to shorten notation, we write \( \fa(u) \) for \( \fa(u,u) \).

For a~densely defined form \( \fa \) we define the \emph{associated operator}
\( A \) in the following way.
The domain \( D(A) \) of \( A \) is the set of \( u \in D(\fa) \) such that
there exists \( f \in D(\fa) \) satisfying
\[
\fa(u,v) = -\scl{f,v}, \qquad v \in D(\fa).
\]
For \( u \in D(\fa) \) we set
\[
Au \coloneqq f.
\]
This definition is correct since by the density of \( D(\fa) \) the element
\( f \) is unique.
It turns out, see~\cite[Theorem~VI.2.1]{kato95}
or~\cite[Theorem~1.52]{ouhabaz05}, that the operator associated with a~densely
defined, accretive, closed and sectorial form \( \fa \) is the generator of
a~bounded holomorphic semigroup in \( H \) denoted
\( \sem{\e^{-t\, \fa}}_{t \geq 0} \).

In order to state Ouhabaz's result (see~\cite[Theorem~5]{ouhabaz95}), which is
our main tool in this section, we need to introduce the notion of the
\emph{degenerate semigroup} related to a~non densely defined form.
Let \( \fa \) be a~form in \( H \).
If the domain \( D(\fa) \) is not dense in \( H \), then there is no operator
associated with the form \( \fa \).
However, we may consider the form in the closure \( H_0 \) of \( D(\fa) \) in
\( H \).
Then \( H_0 \) is a~Hilbert space and there is the operator \( A_0 \) associated
with \( \fa \) as restricted to \( H_0 \).
If the form \( \fa \) is accretive, closed and sectorial, then \( A_0 \)
generates a~bounded, holomorphic semigroup \( \sem{\e^{t A_0}}_{t \geq 0} \) in
\( H_0 \).
We extend this semigroup to the \emph{degenerate semigroup}
\( \sem{\e^{-t\,\fa}}_{t \geq 0} \) in \( H \), by setting
\[
\e^{-t\, \fa} u \coloneqq \e^{t A_0} P_{H_0} u, \qquad u \in H,\ t \geq 0,
\]
where \( P_{H_0} \) is the orthogonal projection of \( H \) onto \( H_0 \).

In our particular setup, we use the following special case of Ouhabaz's theorem
(see~\cite[Theorem~3.2 and Corollary~3.3]{bobkazkun17} for the general version).

\begin{theorem}
\label{thm:ouhabaz}
Let \( \seq{\fa_n}_{n \in \N} \) be a~sequence of accretive, closed and
uniformly sectorial forms defined on the same domain \( D \) in a~Hilbert space
\( H \).
Assume that
\begin{enumerate}
  \item \( \re \fa_n(u) \leq \re \fa_{n+1}(u) \) for every \( u \in D \),
  \item for each \( u \in D \) the imaginary part \/\( \im \fa_n(u) \) does not
depend on \( n \in \N \).
\end{enumerate}
Then the form \( \fa \) defined by
\[
\fa(u,v) \coloneqq \lim_{n \to +\infty} \fa_n(u,v), \qquad u,v \in D(\fa)
\]
with domain
\[
D(\fa) \coloneqq \set[\big]{ u \in D\colon \sup_{n \in \N} \fa_n(u) < +\infty },
\]
is accretive, closed and sectorial.
Moreover, for every \( u \in H \) and \( t > 0 \),
\begin{equation}
\label{eq:ouhabaz-limit}
\lim_{n \to +\infty} \e^{-t\, \fa_n} u = \e^{-t\, \fa} u, \qquad u \in H,\ t > 0
\end{equation}
in \( H \), and the convergence is uniform on compact subsets of
\( (0,\infty) \).
If \( u \) is in the closure of \( D(\fa) \), then~\eqref{eq:ouhabaz-limit}
holds also for \( t = 0 \), and the convergence is uniform on compact subsets of
\( [0,+\infty) \).
\end{theorem}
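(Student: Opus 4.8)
The plan is to split the statement into two parts and handle them by different means. That $\fa$ is a well-defined accretive, closed and sectorial form I would verify directly; the convergence~\eqref{eq:ouhabaz-limit} of the associated degenerate semigroups is precisely Ouhabaz's monotone convergence theorem, so I would obtain it by specialising~\cite[Theorem~5]{ouhabaz95} (equivalently its holomorphic refinement~\cite[Theorem~3.2 and Corollary~3.3]{bobkazkun17}) to the present hypotheses, and I indicate below how that argument runs.

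For $u\in D$ the sequence $\re\fa_n(u)$ is nondecreasing by hypothesis~(i), while $\im\fa_n(u)=\im\fa_1(u)$ by hypothesis~(ii); hence $\fa_n(u)$ converges in $\C$ exactly when $\sup_n\re\fa_n(u)<+\infty$, i.e.\ exactly when $u\in D(\fa)$. Since $\sqrt{\re\fa_n(\,\cdot\,)}$ is a seminorm (the real part of a sectorial form being a nonnegative symmetric form), the inequality $\re\fa_n(u+v)^{1/2}\le\re\fa_n(u)^{1/2}+\re\fa_n(v)^{1/2}$ shows that $D(\fa)$ is a linear subspace of $H$; consequently, by polarisation, $\fa_n(u,v)$ converges for all $u,v\in D(\fa)$ and $\fa(u,v)\coloneqq\lim_n\fa_n(u,v)$ is a genuine sesquilinear form on $D(\fa)$. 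Accretivity and sectoriality then follow with the \emph{same} constant as in~\eqref{eq:sectorial}: $\re\fa(u)=\sup_n\re\fa_n(u)\ge0$ and $\abs{\im\fa(u)}=\abs{\im\fa_n(u)}\le M\re\fa_n(u)\le M\re\fa(u)$.

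The one slightly delicate property is closedness. I would take a sequence $\seq{u_k}$ that is Cauchy for the form norm $\norm{u}_{\fa}^2\coloneqq\re\fa(u)+\norm{u}_H^2$; it is then Cauchy in $H$, say $u_k\to u$. Fixing $m$, from $\re\fa_m\le\re\fa$ on $D(\fa)$ the sequence is $\norm{\,\cdot\,}_{\fa_m}$-Cauchy, so by closedness of $\fa_m$ it converges in $\norm{\,\cdot\,}_{\fa_m}$, necessarily to $u$; thus $u\in D=D(\fa_m)$ and $\re\fa_m(u_k-u)\to0$. Hence $\re\fa_m(u)=\lim_k\re\fa_m(u_k)\le\sup_k\re\fa(u_k)\eqqcolon C<+\infty$ for every $m$, so $u\in D(\fa)$; and fixing $\epsilon>0$ and $k$ with $\re\fa(u_k-u_\ell)<\epsilon$ for all large $\ell$, letting $\ell\to+\infty$ gives $\re\fa_m(u_k-u)\le\epsilon$ for every $m$, hence $\re\fa(u_k-u)\le\epsilon$, i.e.\ $u_k\to u$ in $\norm{\,\cdot\,}_{\fa}$. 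So $\fa$ is closed, and being densely defined on the Hilbert space $\overline{D(\fa)}$, accretive, closed and sectorial, it generates the degenerate semigroup $\e^{-t\,\fa}$ described above.

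For the convergence I would first record the uniform a priori bounds: for $\lambda>0$ and $f\in H$, the element $u_n\coloneqq(\lambda-A_n)^{-1}P_0f$ — with $A_n$ the operator associated with $\fa_n$ on $H_0\coloneqq\overline D$ and $P_0$ the orthogonal projection onto $H_0$ — satisfies $\lambda\scl{u_n,v}+\fa_n(u_n,v)=\scl{f,v}$ for all $v\in D$; testing with $v=u_n$ and taking real parts gives $\norm{u_n}\le\lambda^{-1}\norm{f}$ and $\re\fa_n(u_n)\le\lambda^{-1}\norm{f}^2$. The crux is that $\seq{u_n}$ is Cauchy in $H$, and this is where hypothesis~(ii) is essential: for $m\le n$ the form $\fa_n-\fa_m$ is a nonnegative symmetric form (vanishing imaginary part, real part $\re\fa_n-\re\fa_m\ge0$), so subtracting the variational identities for $u_n$ and $u_m$, testing against $u_n-u_m$ and using the Cauchy--Schwarz inequality for this nonnegative form yields an estimate of the shape $\norm{u_n-u_m}^2\lesssim\re\fa_n(u_m)-\re\fa_m(u_m)$, from which $H$-Cauchyness follows — this is Ouhabaz's argument, which I would quote or reproduce. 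Passing to the limit in the variational identity then identifies $\lim_n u_n$ with $(\lambda-A)^{-1}P_{\overline{D(\fa)}}f$, $A$ being associated with $\fa$ on $\overline{D(\fa)}$, so the degenerate resolvents converge strongly; finally, uniform sectoriality makes all $\e^{-t\,\fa_n}$ holomorphic in a common sector with uniform bounds there, whence~\eqref{eq:ouhabaz-limit} with convergence uniform on compact subsets of $(0,\infty)$ follows from the Cauchy integral representation of the semigroup, and for $u\in\overline{D(\fa)}$ the continuity of $t\mapsto\e^{-t\,\fa_n}u$ at $0$ uniformly in $n$ (controlled by $\re\fa(u)$) together with a $3\epsilon$-argument extends it to compact subsets of $[0,+\infty)$. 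I expect the $H$-Cauchyness of $\seq{u_n}$ to be the main obstacle: the resolvents $(\lambda-A_n)^{-1}$ are not monotone operators as in the symmetric case, so one genuinely needs both the constancy of $\im\fa_n$ and the symmetric-form Cauchy--Schwarz inequality, as in Ouhabaz's proof.
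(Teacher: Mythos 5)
Your proposal is correct and takes essentially the same route as the paper: the paper offers no proof of this theorem at all, presenting it as a special case of Ouhabaz's monotone convergence theorem with a citation to \cite[Theorem~5]{ouhabaz95} and \cite[Theorem~3.2 and Corollary~3.3]{bobkazkun17}, which is exactly where you send the convergence statement. Your additional direct verification that the limit form is well defined, accretive, sectorial and closed, and your sketch of the resolvent-Cauchy argument, are sound and go beyond what the paper records.
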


\subsection{Generation theorem in \texorpdfstring{\( L^2(S) \)}{L2(S)}}
\label{sec:generation-theorem-L2}

We prove a~generation result analogous to Theorem~\ref{thm:dual-is-Markov}.

\begin{theorem}
\label{thm:An-gen}
For each \( n \in \N \) the operator \( A_n^{*} \) in \( L^2(S) \) generates
a~holomorphic semigroup \( \sem{\e^{t A_n^{*}}}_{t \geq 0} \) in \( L^2(S) \).
Furthermore, there exists \( \gamma > 0 \) such that
\begin{equation}
\label{eq:eAn-norm}
\norm{ \e^{tA_n^{*}} }_{\lin(L^2(S))} \leq \e^{\gamma t}, \qquad n \in \N,\ t
\geq 0.
\end{equation}
\end{theorem}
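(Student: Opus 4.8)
The plan is to realize each $A_n^{*}$ as (a perturbation of) an operator associated with a sesquilinear form on $H^1(S)$, and then invoke the form-perturbation machinery to get holomorphy. First I would introduce, for each $n \in \N$, the form
\[
\fa_n(u,v) \coloneqq \kappa_n \sum_{i \in \cn} \int_{E_i} \sigma_i u_i' \conj{v_i'} + (\text{boundary terms from } F_{L,i}, F_{R,i}),
\]
with domain $D(\fa_n) = H^1(S)$, chosen so that integration by parts against a test function $v \in H^1(S)$ recovers $\scl{-A_n^{*}u, v}$ for $u \in D(A_n^{*})$; the transmission conditions \eqref{eq:transmission-cond-F} are exactly what make the boundary contributions match. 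The leading term $\kappa_n \int \sigma |u'|^2$ is accretive and, together with the $L^2$ inner product, makes the form closed on $H^1(S)$ (this is the standard Dirichlet-form argument: the form norm is equivalent to the $H^1$-norm). The boundary terms are lower order, so I would control them via the Sobolev trace/embedding estimate $|u(L_i)|, |u(R_i)| \leq C\norm{u}_{H^1(S)}$ and the interpolation inequality $\norm{u}_{L^\infty}^2 \leq \epsilon \norm{u'}_{L^2}^2 + C_\epsilon \norm{u}_{L^2}^2$, valid on each finite interval $E_i$.

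Next I would verify sectoriality. The point values $u(L_i), u(R_i)$ are real-linear-combination inputs to $F_{L,i}, F_{R,i}$, so the imaginary part of $\fa_n(u)$ comes only from the boundary terms and is bounded, after the interpolation estimate above, by $\epsilon \kappa_n \norm{u'}_{L^2}^2 + C_{\epsilon,n}\norm{u}_{L^2}^2$. By the Kato second representation/sectoriality trick, after shifting $\fa_n$ by a large enough real constant $\gamma_n$ — absorbing the $C_{\epsilon,n}\norm{u}_{L^2}^2$ term into $\re \fa_n(u) + \gamma_n \norm u^2$ — one obtains $|\im \fa_n(u)| \leq M(\re \fa_n(u) + \gamma_n\norm u^2)$ for a fixed $M$. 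Then $\fa_n + \gamma_n$ is accretive, closed, densely defined and sectorial, so by \cite[Theorem~VI.2.1]{kato95} its associated operator $-(\fa_n+\gamma_n)$, i.e. $A_n^{*} - \gamma_n$, generates a bounded holomorphic semigroup; hence $A_n^{*}$ generates a holomorphic semigroup with $\norm{\e^{tA_n^{*}}}_{\lin(L^2(S))} \leq e^{\gamma_n t}$.

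The one remaining issue — and I expect it to be the main obstacle — is the \emph{uniformity} in $n$ of the exponential bound, i.e. producing a single $\gamma$ independent of $n$ in \eqref{eq:eAn-norm}. The naive shift $\gamma_n$ above blows up, because in \eqref{eq:transmission-cond-F} the functionals $F_{L,i}, F_{R,i}$ are fixed (they do \emph{not} carry a $\kappa_n$), so the boundary terms of $\fa_n$ stay $O(1)$ while the principal part is $\kappa_n \int \sigma|u'|^2 \to \infty$. This is actually favorable: with $\kappa_n \geq \kappa_1 > 0$ the interpolation estimate gives $|\im\fa_n(u)| \leq \epsilon\kappa_n\norm{u'}_{L^2}^2 + C_\epsilon\norm u_{L^2}^2$ with $C_\epsilon$ \emph{independent of $n$} (since $\kappa_n$ only helps the $\norm{u'}^2$ side), and likewise the real boundary terms are bounded by $\epsilon\kappa_n\norm{u'}_{L^2}^2 + C_\epsilon\norm u_{L^2}^2$ uniformly. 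Choosing $\epsilon$ small (e.g.\ $\epsilon\,\min_i\sigma_i = \tfrac14$) absorbs these into the principal part for all $n$ simultaneously, leaving a fixed shift $\gamma = C_{1/4}$ that works for every $n$; this yields \eqref{eq:eAn-norm} and simultaneously shows the forms $\fa_n + \gamma$ are \emph{uniformly} accretive and sectorial, which is precisely the hypothesis needed for the monotone convergence argument (Theorem~\ref{thm:ouhabaz}) in the next step. I would also note that one should double-check, via the defining relation of the associated operator, that the form-operator of $\fa_n$ really has domain $D(A_n^{*}) = \{u \in H^2(S): \eqref{eq:transmission-cond-F}\}$ and not a larger set; this is the usual elliptic-regularity computation showing $u \in D(\fa_n)$ with $\fa_n(u,\cdot) = -\scl{g,\cdot}$ forces $u'' \in L^2$ on each edge and the natural boundary conditions.
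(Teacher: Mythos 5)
Your proposal is correct and follows essentially the same route as the paper: the same decomposition $\fa_n = \fb_n + \fc$ into a principal Dirichlet part and $n$-independent boundary terms, the same Gagliardo--Nirenberg/Young absorption of $\fc$ into $\tfrac12\fb_n + \tfrac{\gamma}{2}\norm{\cdot}_{L^2(S)}^2$, the same Kato/Ouhabaz form-to-generator step, and the same resolution of uniformity in $n$ (the paper's shift $\gamma = c^2C^2/(\kappa_n\sigma_{\min})$ is decreasing in $n$, which is exactly your observation that $\kappa_n$ only helps the $\norm{u'}^2$ side). The domain identification you flag at the end is carried out in the paper as Proposition~\ref{prop:an-An}.
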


Throughout this section fix \( n \in \N \).
We begin by finding a~form \( \fa_n \) in \( L^2(S) \) such that \( A_n^{*} \)
is the operator associated with \( \fa_n \).
Define the form \( \fb_n \) in \( L^2(S) \) by
\[
\fb_n(u,v) \coloneqq \kappa_n \scl{\sigma u',v'}_{L^2(S)}
\]
with domain \( D(\fb_n) \coloneqq H^1(S) \).
Let \( u \in D(A_n^{*}) \) and \( v \in H^1(S) \).
Integration by parts gives
\begin{equation*}
\begin{split}
\int_{E_i} u'' \conj{v} = -\int_{E_i} u' \conj{v}' + u'(R_i) \conj{v}(R_i) -
u'(L_i) \conj{v}(L_i), \qquad i \in \cn.
\end{split}
\end{equation*}
Hence, since \( u \) satisfies transmission
conditions~\eqref{eq:transmission-cond-F},
\begin{equation}
\label{eq:form-operator-calc}
\scl{A_n^{*}u,v}_{L^2(S)} = - \fb_n(u,v) - \fc(u,v),
\end{equation}
where \( \fc \) is the form in \( L^2(S) \) given by
\[
\fc(u,v) \coloneqq \sum_{i \in \cn} \sigma_i [(F_{L,i} u) \conj{v}(L_i) -
(F_{R,i} u) \conj{v}(R_i)]
\]
with domain \( D(\fc) \coloneqq H^1(S) \).
Note that \( \fc \) does not depend on \( n \).
Formula~\eqref{eq:form-operator-calc} suggests that we should set
\( D(\fa_n) \coloneqq H^1(S) \) and define
\begin{equation}
\label{eq:form-a}
\fa_n \coloneqq \fb_n + \fc.
\end{equation}

The space \( H^1(S) \) is dense in \( L^2(S) \), therefore, in order to prove
that the operator associated with \( \fa_n \) generates a~holomorphic semigroup,
we are left with proving that the form \( \fa_n \) is accretive, closed and
sectorial.

For the proofs of Lemma~\ref{lem:bn} and
Proposition~\ref{prop:a-closed-sectorial} it is useful to denote
\[
\sigma_{\min} \coloneqq \min_{i \in \cn} \sigma_i, \qquad \sigma_{\max}
\coloneqq \max_{i \in \cn} \sigma_i.
\]

\begin{lemma}
\label{lem:bn}
The form \( \fb_n \) is accretive and closed.
\end{lemma}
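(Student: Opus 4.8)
The plan is to verify the two properties directly from the definition, using that $\sigma_{\min} > 0$. For accretivity: since $\sigma_i > 0$ and $\kappa_n > 0$, one computes
\[
\fb_n(u,u) = \kappa_n \sum_{i \in \cn} \sigma_i \int_{E_i} \abs{u_i'}^2 \geq 0,
\]
so in fact $\fb_n(u,u)$ is real and nonnegative, which gives $\re \fb_n(u) \geq 0$ for every $u \in H^1(S)$. This is immediate and carries no difficulty.

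For closedness the task is to show that $H^1(S)$ is complete with respect to the norm induced by the inner product $\scl{u,v}_{\fb_n} = \re\fb_n(u,v) + \scl{u,v}_{L^2(S)}$, i.e.\ the norm $\|u\|_{\fb_n}^2 = \kappa_n \sum_i \sigma_i \|u_i'\|_{L^2(E_i)}^2 + \|u\|_{L^2(S)}^2$. The point is that this norm is equivalent to the standard $H^1(S)$ norm: using $\sigma_{\min} \leq \sigma_i \leq \sigma_{\max}$ one gets, for a constant depending on $\kappa_n$, $\sigma_{\min}$, $\sigma_{\max}$,
\[
\min(1,\kappa_n \sigma_{\min}) \, \|u\|_{H^1(S)}^2 \leq \|u\|_{\fb_n}^2 \leq \max(1,\kappa_n \sigma_{\max}) \, \|u\|_{H^1(S)}^2.
\]
Since $H^1(S) = W^{1,2}(S)$ is complete in its standard norm (it is a closed subspace of the Sobolev space, being a finite product of the complete spaces $H^1(E_i)$), completeness in the equivalent norm $\|\cdot\|_{\fb_n}$ follows, and hence $\fb_n$ is closed.

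I do not expect any genuine obstacle here; the only thing to be careful about is stating clearly that $H^1(S)$, defined as the space of $u = (u_i)_{i\in\cn}$ with each $u_i \in H^1(E_i)$, is complete — this is standard but should be invoked explicitly since it is exactly what makes the norm-equivalence argument conclude closedness. The whole proof is two short paragraphs: one line for accretivity, a norm-equivalence estimate plus an appeal to completeness of $H^1(S)$ for closedness.
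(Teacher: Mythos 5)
Your proof is correct and follows essentially the same route as the paper: accretivity from the nonnegativity of \( \fb_n(u) = \kappa_n \sum_{i \in \cn} \sigma_i \norm{ u_i' }_{L^2(E_i)}^2 \), and closedness from the equivalence of \( \dnorm_{\fb_n} \) with the standard \( H^1(S) \) norm together with the completeness of \( H^1(S) \). No issues.
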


\begin{proof}
For \( \sqrt{\sigma} \coloneqq (\sqrt{\sigma_i})_{i \in \cn} \) we have
\begin{equation}
\label{eq:bnu}
\fb_n(u) = \kappa_n \norm{ \sqrt{\sigma} u' }_{L^2(S)}^2, \qquad u \in H^1(S),
\end{equation}
which proves accretivity.
Observe that
\[
\kappa_n \sigma_{\min} \norm{ u' }_{L^2(S)}^2 \leq \fb_n(u) \leq \kappa_n
\sigma_{\max} \norm{ u' }_{L^2(S)}, \qquad u \in H^1(S).
\]
Hence the norm \( \norm{ \, \cdot \, }_{\fb_n} \) associated with \( \fb_n \) is
equivalent to the standard norm in \( H^1(S) \) (which is a~Hilbert space), and
the claim follows.
\end{proof}

\begin{proposition}
\label{prop:a-closed-sectorial}
The form \( \fa_n \) is closed and there exists \( \gamma > 0 \) such that the
form \( \fa_n + \gamma \) is sectorial with
\begin{equation}
\label{eq:a-sectorial}
\abs{ \im(\fa_n + \gamma)(u) } \leq \re (\fa_n + \gamma)(u), \qquad u \in H^1(S).
\end{equation}
\end{proposition}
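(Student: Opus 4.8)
The plan is to treat the form $\fa_n = \fb_n + \fc$ as a perturbation of the nonnegative symmetric form $\fb_n$, whose form norm is already known (Lemma~\ref{lem:bn}) to be equivalent to the standard $H^1(S)$-norm. The perturbing form $\fc$ involves only the functionals $F_{L,i}$, $F_{R,i}$, which are finite linear combinations of the endpoint values $u(L_j)$, $u(R_j)$; thus the whole difficulty is concentrated in controlling those point evaluations by the $H^1(S)$-norm with an \emph{arbitrarily small} weight on the derivative term. The first step is therefore to isolate the one-dimensional trace/interpolation inequality: for every $\epsilon > 0$ there exists $C_\epsilon > 0$ such that
\[
\sum_{i \in \cn} \lrp[\big]{ \abs{ u(L_i) }^2 + \abs{ u(R_i) }^2 } \leq \epsilon \norm{ u' }_{L^2(S)}^2 + C_\epsilon \norm{ u }_{L^2(S)}^2, \qquad u \in H^1(S).
\]
On a single edge $E_i \cong [0,d_i]$ this follows by writing $u_i(0) = u_i(x) - \int_0^x u_i'$, averaging over $x$ in a short subinterval $(0,\delta)$, and applying the Cauchy--Schwarz inequality; summing over the $2N$ endpoints yields the claim with $\epsilon$ of order $\delta$ and $C_\epsilon$ of order $\delta^{-1}$.

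Given this estimate, the next step is purely bookkeeping. Since, by \eqref{eq:transition-conditions-dual1}--\eqref{eq:transition-conditions-dual2}, each $F_{L,i}u$ and $F_{R,i}u$ is bounded in absolute value by a fixed constant (depending only on the $\sigma_j$'s and the permeability coefficients) times $\sum_j \lrp[\big]{ \abs{u(L_j)} + \abs{u(R_j)} }$, the finiteness of the sums together with the elementary inequality $2ab \leq a^2 + b^2$ produces a constant $M_0 > 0$ with
\[
\abs{ \fc(u) } \leq M_0 \sum_{i \in \cn} \lrp[\big]{ \abs{ u(L_i) }^2 + \abs{ u(R_i) }^2 }, \qquad u \in H^1(S).
\]
Combining this with the trace inequality (applied with $\epsilon/M_0$ in place of $\epsilon$), we get: for every $\epsilon > 0$ there is $C_\epsilon' > 0$ such that $\abs{\fc(u)} \leq \epsilon \norm{u'}_{L^2(S)}^2 + C_\epsilon' \norm{u}_{L^2(S)}^2$ for all $u \in H^1(S)$.

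Now I would fix $\epsilon \coloneqq \tfrac12 \kappa_1 \sigma_{\min}$ — which is independent of $n$ — set $C' \coloneqq C_\epsilon'$ and $\gamma \coloneqq 2C'$. Using $\fb_n(u) \geq \kappa_n \sigma_{\min} \norm{u'}_{L^2(S)}^2$ (see \eqref{eq:bnu}) and $\kappa_n \geq \kappa_1$, the bound on $\fc$ gives
\[
\re (\fa_n + \gamma)(u) = \fb_n(u) + \re \fc(u) + \gamma \norm{u}_{L^2(S)}^2 \geq \tfrac12 \kappa_n \sigma_{\min} \norm{u'}_{L^2(S)}^2 + C' \norm{u}_{L^2(S)}^2
\]
for all $u \in H^1(S)$ and all $n \in \N$. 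The right-hand side is bounded below by a positive multiple of $\norm{u}_{H^1(S)}^2$, while $\re(\fa_n+\gamma)(u)$ is trivially bounded above by a multiple of $\norm{u}_{H^1(S)}^2$ (continuity of $\fb_n$ and $\fc$ on $H^1(S)$); hence the form norm of $\fa_n$ (equivalently, of $\fa_n+\gamma$) is equivalent to the $H^1(S)$-norm, and since $H^1(S)$ is complete this shows that $\fa_n$ is closed, exactly as in the proof of Lemma~\ref{lem:bn}. For sectoriality, observe that $\fb_n(u)$ and $\gamma\norm{u}_{L^2(S)}^2$ are real, so $\im(\fa_n+\gamma)(u) = \im \fc(u)$, and therefore
\[
\abs{ \im(\fa_n+\gamma)(u) } = \abs{ \im \fc(u) } \leq \abs{ \fc(u) } \leq \tfrac12 \kappa_n \sigma_{\min} \norm{u'}_{L^2(S)}^2 + C' \norm{u}_{L^2(S)}^2 \leq \re(\fa_n+\gamma)(u),
\]
which is precisely \eqref{eq:a-sectorial}, with $M = 1$. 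I expect the main obstacle to be the trace inequality in the sharp form with a free small parameter in front of $\norm{u'}_{L^2(S)}^2$; once that is in hand, everything is routine, and the fact that $\gamma$ can be chosen independently of $n$ comes for free because the nondecreasing sequence $(\kappa_n)_{n \in \N}$ is bounded below by $\kappa_1 > 0$ — a point that will matter for \eqref{eq:eAn-norm} in Theorem~\ref{thm:An-gen}.
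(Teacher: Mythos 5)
Your proof is correct, and the overall strategy coincides with the paper's: show that \( \fc \) is relatively form-bounded with respect to \( \fb_n \) with an arbitrarily small coefficient on \( \norm{u'}_{L^2(S)}^2 \), deduce closedness of \( \fa_n = \fb_n + \fc \) from the resulting equivalence of the form norm with the \( H^1(S) \)-norm, and read off sectoriality from \( \im(\fa_n+\gamma)(u) = \im\fc(u) \) together with the lower bound on \( \re(\fa_n+\gamma)(u) \). Where you differ is in how the key estimate \( \abs{\fc(u)} \leq \epsilon \norm{u'}_{L^2(S)}^2 + C_\epsilon \norm{u}_{L^2(S)}^2 \) is obtained: the paper bounds \( \abs{\fc(u)} \) by \( c\norm{u}_{L^\infty(S)}^2 \) and then invokes the Gagliardo--Nirenberg interpolation inequality followed by Young's inequality, whereas you bound \( \abs{\fc(u)} \) by the sum of squared endpoint values and prove the \( \epsilon \)-trace inequality directly by averaging \( u_i(0) = u_i(x) - \int_0^x u_i' \) over a short subinterval. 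The two routes carry essentially the same content, but yours is more elementary and self-contained, and on a bounded interval it sidesteps the zeroth-order correction that the interpolation inequality strictly requires there (constants have \( u'=0 \) but nonzero sup norm); the paper absorbs that issue into the choice of \( \gamma \). A genuine added value of your write-up is that you make explicit that \( \gamma \) can be chosen independently of \( n \), via \( \kappa_n \geq \kappa_1 \); in the paper \( \gamma = c^2C^2/(\kappa_n\sigma_{\min}) \) formally depends on \( n \), and the uniformity --- which is needed later for \eqref{eq:eAn-norm} and for Lemma~\ref{lem:angamma-properties} --- is left implicit.
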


Here, by the form \( \fa_n + \gamma \) we mean the form defined by
\( (\fa_n+\gamma)(u,v) = \fa_n(u,v) + \gamma \scl{u,v}_{L^2(S)} \).

\begin{proof}
Observe that for some \( c > 0 \) we have
\[
\abs{ \fc(u) } \leq c \norm{ u }_{L^{\infty}(S)}^2, \qquad u \in H^1(S),
\]
where \( \dnorm_{L^{\infty}(S)} \) is the standard (essential) supremum norm.
By the Gagliardo-Nirenberg interpolation (see
e.g.~\cite[Theorem~12.83]{giovanni17}) there exists \( C > 0 \) such that
\[
\norm{ u }_{L^{\infty}(S)}^2 \leq C \norm{ u }_{L^2(S)} \norm{ u' }_{L^2(S)},
\qquad u \in H^1(S).
\]
Hence, by Young's inequality,
\[
\abs{ \fc(u) } \leq \frac{\gamma}{2} \norm{ u }_{L^2(S)}^2 + \frac{\kappa_n
  \sigma_{\min}}{2} \norm{ u' }_{L^2(S)}^2, \qquad u \in H^1(S)
\]
for \( \gamma \coloneqq c^2 C^2 / (\kappa_n \sigma_{\min}) \).
Therefore,
\begin{equation}
\label{eq:c-estimate}
\abs{ \fc(u) } \leq \frac{1}{2} \fb_n(u) + \frac{\gamma}{2} \norm{ u
}_{L^2(S)}^2.
\end{equation}
This means that \( \fc \) is \( \fb_n \)-form bounded with \( \fb_n \)-bound
\( 1/2 \) (see~\cite[Definition~1.17]{ouhabaz05}).
Using~\cite[Theorem~VI.3.4]{kato95} or~\cite[Theorem~1.19]{ouhabaz05}, it
follows that the form \( \fa_n = \fb_n + \fc \) is closed as a~relatively
bounded perturbation of the closed form \( \fb_n \).

To show the second part of the lemma notice that by~\eqref{eq:form-a}
and~\eqref{eq:c-estimate} we have
\[
\abs{ \im \fa_n(u) } = \abs{ \im \fc(u) } \leq \frac 12 \fb_n(u) +
\frac{\gamma}{2} \norm{ u }_{L^2(S)}^2,
\]
and
\[
\re \fa_n(u) \geq \fb_n(u) - \abs{ \re \fc(u) } \geq \frac 12 \fb_n(u) -
\frac{\gamma}{2} \norm{ u }_{L^2(S)}^2.
\]
Combining these two inequalities we obtain
\[
\abs{ \im(\fa_n + \gamma)(u) } = \abs{ \im \fa_n(u) } \leq \re \fa_n(u) + \gamma
\norm{ u }_{L^2(S)}^2,
\]
which proves~\eqref{eq:a-sectorial}.
\end{proof}

\begin{proposition}
\label{prop:an-An}
The operator associated with \( \fa_n \) is \( A_n^{*} \).
\end{proposition}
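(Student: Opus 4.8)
The plan is to show that the operator associated with the form $\fa_n$ coincides with $A_n^{*}$ by verifying the two inclusions $A_n^{*} \subset \ca$ and $\ca \subset A_n^{*}$, where $\ca$ denotes the associated operator of $\fa_n$. One inclusion is already essentially done: equation~\eqref{eq:form-operator-calc} together with~\eqref{eq:form-a} shows that for $u \in D(A_n^{*})$ and every $v \in H^1(S) = D(\fa_n)$ we have $\fa_n(u,v) = -\scl{A_n^{*}u,v}_{L^2(S)}$, so $u \in D(\ca)$ with $\ca u = A_n^{*} u$. Thus $A_n^{*} \subset \ca$.

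For the reverse inclusion, I would take $u \in D(\ca)$, so $u \in H^1(S)$ and there is $f \in L^2(S)$ with $\fa_n(u,v) = -\scl{f,v}_{L^2(S)}$ for all $v \in H^1(S)$. Testing first against all $v \in C_c^{\infty}$ supported in the interior of a single edge $E_i$ kills the form $\fc$ and the boundary contributions, leaving $\kappa_n \scl{\sigma u', v'}_{L^2(E_i)} = -\scl{f,v}_{L^2(E_i)}$; this is precisely the weak formulation of $\kappa_n \sigma_i u_i'' = f_i$ on $E_i$, so $u \in H^2(S)$ and $A_n^{*}u = \kappa_n \sigma u'' = f$ once we know the transmission conditions hold. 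To recover those, I would now integrate by parts on each edge as in~\eqref{eq:form-operator-calc}, using $u \in H^2(S)$, and compare with $\fa_n(u,v) = -\scl{f,v}_{L^2(S)}$: the interior terms cancel and one is left with a boundary identity of the form
\[
\sum_{i \in \cn} \sigma_i \lrb[\big]{ (\kappa_n u'(L_i) - F_{L,i}u)\conj{v}(L_i) - (\kappa_n u'(R_i) - F_{R,i}u)\conj{v}(R_i) } = 0
\]
valid for all $v \in H^1(S)$. Since the trace map $v \mapsto (v(L_i), v(R_i))_{i \in \cn}$ from $H^1(S)$ is surjective onto the appropriate finite-dimensional space, the coefficient of each trace value must vanish, which gives exactly the transmission conditions~\eqref{eq:transmission-cond-F}. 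Hence $u \in D(A_n^{*})$, completing $\ca \subset A_n^{*}$.

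The main obstacle, such as it is, is the surjectivity-of-traces argument used to pass from the boundary identity to the pointwise transmission conditions: one must be careful that the trace values $v(L_i)$ and $v(R_i)$ can be prescribed independently, which is true precisely because in the disjoint union $S$ the copies $V_i$ of a vertex $V$ are formally distinct, so no linear relation among the boundary values is forced. A secondary point requiring care is the regularity bootstrap: one must invoke that a weak solution of $u'' = f_i/(\kappa_n\sigma_i) \in L^2(E_i)$ with $u \in L^2(E_i)$ automatically lies in $H^2(E_i)$ so that the endpoint traces $u'(L_i)$, $u'(R_i)$ and the point evaluations $u(L_j)$, $u(R_j)$ appearing in $F_{L,i}u$, $F_{R,i}u$ make classical sense (via the Sobolev embedding $H^2(E_i) \hookrightarrow C^1(E_i)$, already used implicitly in Section~\ref{sec:convergence--l2}). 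Both points are routine, so the proof is short.
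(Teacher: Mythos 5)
Your proposal is correct and follows essentially the same route as the paper: the forward inclusion via \eqref{eq:form-operator-calc}, interior test functions to get $u \in H^2(S)$ and identify $f = \kappa_n\sigma u''$, and then recovery of the transmission conditions from the leftover boundary identity. The paper merely phrases the last step by choosing, for each endpoint separately, a test function $v$ vanishing at all other endpoints, which is the concrete form of your trace-surjectivity argument.
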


\begin{proof}
Let \( B_n \) be the operator associated with \( \fa_n \).
We claim that \( B_n \) is exactly \( A_n^{*} \).
Formula~\eqref{eq:form-operator-calc} shows that
\( D(A_n^{*}) \subseteq D(B_n) \) and \( B_n u = A_n^{*} u \) for
\( u \in D(A_n^{*}) \).
On the other hand let \( u \in D(B_n) \).
There exists \( f \in H^1(S) \) such that
\( \fa_n(u,v) = - \scl{f,v}_{L^2(S)} \) for all \( v \in H^1(S) \).
Choose \( v \in H^1(S) \) that on each edge is compactly supported smooth
function.
Then \( \fc(u,v) = 0 \) and consequently
\[
\fa_n(u,v) = \fb_n(u,v) = \kappa_n \scl{\sigma u',v'}_{L^2(S)}.
\]
Therefore \( \scl{f,v} = -\kappa_n \scl{\sigma u',v'} \), which proves that
\( u \in H^2(S) \) and \( f = B_n u = \kappa_n \sigma u'' \).
Now for fixed \( i \in \cn \) choose \( v \in H^1(S) \) with
\[
v(R_i) \neq 0, \quad v(L_i) = 0,\qquad \text{and} \qquad v(R_j) = v(L_j) = 0,
\quad j \in \cn,\ j \neq i.
\]
Then
\[
\fc(u,v) = -\sigma_i (F_{R,i} u) \bar v(R_i)
\]
and, integrating by parts,
\[
\fb_n(u,v) = -\kappa_n \scl{\sigma u'',v}_{L^2(S)} + \kappa_n \sigma_i u'(R_i)
\bar v(R_i).
\]
Hence
\begin{equation*}
\fa_n(u,v) = -\kappa_n \scl{\sigma u'',v}_{L^2(S)} + \sigma_i
\lrb{ \kappa_n u'(R_i) - (F_{R,i} u) } \bar v(R_i).
\end{equation*}
This equality, since \( \fa_n(u,v) = -\kappa_n \scl{\sigma u'',v} \), is
equivalent to
\[
\kappa_n u'(R_i) = F_{R,i} u.
\]
In the same way we prove that
\[
\kappa_n u'(L_i) = F_{L,i} u.
\]
This means that transmission conditions~\eqref{eq:transmission-cond-F} are
satisfied and, since \( u \in H^2(S) \), it follows that \( u \in D(A_n^{*}) \).
Finally \( D(B_n) = D(A_n^{*}) \) and \( B_n u = A_n^{*} u \) for all
\( u \in D(A_n^{*}) \).
\end{proof}

\begin{proof}[Proof of Theorem~\textup{\ref{thm:An-gen}}]
Let \( \gamma > 0 \) be as in Proposition~\ref{prop:a-closed-sectorial}.
Then the form \( \fa_n + \gamma \) is densely defined, accretive, closed and
sectorial.
Moreover, by Proposition~\ref{prop:an-An}, the operator associated with
\( \fa_n + \gamma \) is \( A_n^{*} - \gamma \).
Therefore, by~\cite[Theorem~1.52]{ouhabaz05}, it follows that
\( A_n^{*} - \gamma \) generates a~holomorphic contraction semigroup in
\( L^2(S) \).
Hence, \( A_n^{*} \) generates a~holomorphic semigroup in \( L^2(S) \), and
since
\[
\norm{ \e^{tA_n^{*}} \e^{-\gamma} }_{\lin(L^2(S))} \leq 1, \qquad n
\in \N,\ t \geq 0,
\]
inequality~\eqref{eq:eAn-norm} holds.
\end{proof}

\subsection{Convergence result in \texorpdfstring{\( L^2(S) \)}{L2(S)}}
\label{sec:convergence-result-l2}

Let \( L_0^2(S) \) be the closed subspace of \( L^2(S) \) consisting of complex
functions that are constant on each edge.
Similarly as for \( L_0^1(S) \) defined in Section~\ref{sec:convergence}, the
space \( L_0^2(S) \) is isometrically isomorphic to \( \C^N \) equipped with the
appropriate scalar product.

Let \( Q \) be the operator in \( L_0^2(S) \) defined as in \( L_0^1(S) \) by
formula~\eqref{eq:Q-formula}.
Similarly, let \( P \) be the projection of \( L^2(S) \) onto \( L_0^2(S) \)
given by~\eqref{eq:P-formula}.
Then, for the operators \( A_n^{*} \)'s defined in the beginning of
Section~\ref{sec:convergence--l2}, the following analogous result to
Theorem~\ref{thm:convergence-of-dual} holds.

\begin{theorem}
\label{thm:agn-asymptotics}
For every \( u \in L^2(S) \) we have
\begin{equation}
\label{eq:l2-limit}
\lim_{n \to +\infty} \e^{t A_n^{*}} u = \e^{tQ} P u, \qquad t > 0
\end{equation}
in \( L^2(S) \).
The convergence is uniform on compact subsets of \( (0,+\infty) \).
If \( \phi \in L_0^2(S) \), then~\eqref{eq:l2-limit} holds also for \( t = 0 \),
and the convergence if uniform on compact subsets of \( [0,+\infty) \).
\end{theorem}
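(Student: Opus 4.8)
The plan is to apply Ouhabaz's monotone convergence theorem (Theorem~\ref{thm:ouhabaz}) to the forms \( \fa_n \) associated with \( A_n^{*} \), after rescaling. Writing \( \fa_n = \fb_n + \fc \) with \( \fb_n(u) = \kappa_n \norm{\sqrt{\sigma} u'}_{L^2(S)}^2 \) and \( \fc \) independent of \( n \), we observe that since \( \seq{\kappa_n} \) is nondecreasing we have \( \re \fa_n(u) = \kappa_n \norm{\sqrt{\sigma}u'}^2_{L^2(S)} + \re \fc(u) \leq \re \fa_{n+1}(u) \), so condition~(i) of Theorem~\ref{thm:ouhabaz} holds; condition~(ii) holds because \( \im \fa_n(u) = \im \fc(u) \) does not depend on \( n \). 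However, the forms \( \fa_n \) are only \emph{uniformly} sectorial after a shift: by Proposition~\ref{prop:a-closed-sectorial} (and the fact that \( \kappa_n \geq \kappa_1 \), so the constant \( \gamma = c^2 C^2/(\kappa_n \sigma_{\min}) \) can be taken uniformly as \( \gamma_1 \coloneqq c^2 C^2/(\kappa_1 \sigma_{\min}) \)) the shifted forms \( \fa_n + \gamma_1 \) are accretive, closed and uniformly sectorial on the common domain \( D = H^1(S) \), and the monotonicity and \( n \)-independence of the imaginary part are unaffected by the shift. So I would apply Theorem~\ref{thm:ouhabaz} to \( \seq{\fa_n + \gamma_1} \), obtaining a limit form \( \fa \) with domain \( D(\fa) = \set{u \in H^1(S) : \sup_n \fa_n(u) < \infty} \) and strong convergence \( \e^{-t(\fa_n + \gamma_1)} u \to \e^{-t(\fa+\gamma_1)} u \); undoing the shift gives \( \e^{tA_n^{*}} u \to \e^{-t\,\fa + \gamma_1 t} u \) with the stated uniformity on compact subsets of \( (0,\infty) \), and on \( [0,\infty) \) if \( u \) lies in the closure of \( D(\fa) \).

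The heart of the matter is then to \emph{identify} the limit semigroup, i.e. to show \( \e^{-t\,\fa} u = \e^{tQ} P u \) (so that the \( \gamma_1 \)-shift cancels and, in particular, the limit semigroup is conservative on \( L_0^2(S) \)). First I would determine \( D(\fa) \): for \( u \in H^1(S) \) with \( u' \not\equiv 0 \), \( \fb_n(u) = \kappa_n \norm{\sqrt{\sigma}u'}^2 \to \infty \), so \( \sup_n \fa_n(u) = \infty \); hence \( D(\fa) = \set{u \in H^1(S) : u' = 0} = L_0^2(S) \), and on this domain \( \fb_n(u) = 0 \) for all \( n \), so \( \fa(u,v) = \fc(u,v) \) for \( u,v \in L_0^2(S) \). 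The closure \( H_0 \) of \( D(\fa) \) is \( L_0^2(S) \) itself, and the orthogonal projection \( P_{H_0} \) is exactly the projection \( P \) of~\eqref{eq:P-formula} (averaging over each edge; this is the orthogonal projection in \( L^2 \) onto edgewise-constant functions, which coincides with the \( L^1 \)-formula). So it remains to check that the operator \( A_0 \) associated with the form \( \fc \) restricted to \( L_0^2(S) \) equals \( -Q \). For edgewise-constant \( u, v \) with values \( u_i, v_i \), the defining relation \( \fc(u,v) = -\scl{A_0 u, v}_{L^2(S)} \) together with the computation of \( \fc \) on \( L_0^2(S) \) — which by~\eqref{eq:transition-conditions-dual1}--\eqref{eq:transmission-cond-F} gives exactly the bilinear form \( \sum_i \sigma_i[(F_{L,i}u)\bar v_i - (F_{R,i}u)\bar v_i] \), the negative of the expression paired against \( v \) in~\eqref{eq:sigmaPK} — shows \( A_0 u = Q u \); here I use that the computation~\eqref{eq:sigmaPK} of \( PK\phi \) on edgewise-constant \( \phi \) is purely algebraic and applies verbatim in the \( L^2 \) setting. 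Since the extended/degenerate semigroup is \( \e^{-t\,\fa} u = \e^{tA_0} P u = \e^{tQ} P u \), the identification is complete.

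The main obstacle is the sectoriality bookkeeping across the shift together with the identification of \( P_{H_0} \) and \( A_0 \): one must be careful that the shift constant \( \gamma_1 \) can genuinely be chosen independent of \( n \) (which works precisely because \( \kappa_n \) is bounded below by \( \kappa_1 > 0 \), making the \( \fb_n \)-bound \( 1/2 \) argument in the proof of Proposition~\ref{prop:a-closed-sectorial} uniform), and that the limit form's domain collapses all the way down to \( L_0^2(S) \) rather than something larger. Everything else is a transcription of the \( L^1 \) computation of \( Q \) into the Hilbert-space language, so I expect the proof itself to be short.

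\begin{proof}
By Theorem~\ref{thm:An-gen} and Proposition~\ref{prop:an-An}, for each \( n \) the operator \( A_n^{*} \) is associated with the form \( \fa_n = \fb_n + \fc \) on \( D(\fa_n) = H^1(S) \), where \( \fb_n(u) = \kappa_n \norm{\sqrt{\sigma}\,u'}_{L^2(S)}^2 \) and \( \fc \) does not depend on \( n \).
Set \( \gamma_1 \coloneqq c^2 C^2/(\kappa_1 \sigma_{\min}) \) with \( c, C \) as in the proof of Proposition~\ref{prop:a-closed-sectorial}; since \( \kappa_n \geq \kappa_1 \), the estimate~\eqref{eq:c-estimate} holds with \( \gamma \) replaced by \( \gamma_1 \) for every \( n \), so the forms \( \fa_n + \gamma_1 \) are accretive, closed and uniformly sectorial on the common domain \( D \coloneqq H^1(S) \), and the operator associated with \( \fa_n + \gamma_1 \) is \( A_n^{*} - \gamma_1 \).
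Because \( \seq{\kappa_n}_{n \in \N} \) is nondecreasing, \( \re(\fa_n+\gamma_1)(u) = \kappa_n \norm{\sqrt{\sigma}\,u'}_{L^2(S)}^2 + \re\fc(u) + \gamma_1\norm{u}_{L^2(S)}^2 \) is nondecreasing in \( n \) for each \( u \in D \), while \( \im(\fa_n+\gamma_1)(u) = \im\fc(u) \) is independent of \( n \).
Thus the hypotheses of Theorem~\ref{thm:ouhabaz} are satisfied.

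Let \( \fa \) be the limit form, with domain \( D(\fa) = \set[\big]{u \in H^1(S)\colon \sup_n \fa_n(u) < +\infty} \).
If \( u \in H^1(S) \) satisfies \( u' \neq 0 \), then \( \fb_n(u) = \kappa_n \norm{\sqrt{\sigma}\,u'}_{L^2(S)}^2 \to +\infty \) as \( \kappa_n \to +\infty \), so \( \sup_n \fa_n(u) = +\infty \).
Hence \( D(\fa) = L_0^2(S) \), and for \( u, v \in L_0^2(S) \) we have \( \fb_n(u,v) = 0 \) for all \( n \), so \( (\fa+\gamma_1)(u,v) = \fc(u,v) + \gamma_1\scl{u,v}_{L^2(S)} \).
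The closure \( H_0 \) of \( D(\fa) \) in \( L^2(S) \) is \( L_0^2(S) \), and the orthogonal projection \( P_{H_0} \) of \( L^2(S) \) onto \( L_0^2(S) \) is exactly \( P \) given by~\eqref{eq:P-formula}.
Let \( A_0 \) be the operator associated with \( \fa \) restricted to \( L_0^2(S) \); for \( u = (u_i)_{i\in\cn}, v = (v_i)_{i\in\cn} \in L_0^2(S) \), the value \( \fc(u,v) = \sum_{i\in\cn}\sigma_i[(F_{L,i}u)\conj{v}(L_i) - (F_{R,i}u)\conj{v}(R_i)] \) equals, by~\eqref{eq:transition-conditions-dual1}--\eqref{eq:transmission-cond-F} and the computation~\eqref{eq:sigmaPK}, the form \( -\scl{Qu, v}_{L^2(S)} \).
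Therefore \( A_0 u = Qu \) for \( u \in L_0^2(S) \), and consequently \( A_0 - \gamma_1 \) is the operator associated with \( (\fa + \gamma_1)\!\restriction_{L_0^2(S)} \).

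By Theorem~\ref{thm:ouhabaz} applied to \( \seq{\fa_n+\gamma_1}_{n\in\N} \), for every \( u \in L^2(S) \) and \( t > 0 \) we have
\[
\lim_{n\to+\infty} \e^{-t(\fa_n+\gamma_1)} u = \e^{-t(\fa+\gamma_1)} u = \e^{t(A_0-\gamma_1)} P u = \e^{-\gamma_1 t}\,\e^{tQ} P u
\]
in \( L^2(S) \), uniformly on compact subsets of \( (0,\infty) \).
Since \( \e^{-t(\fa_n+\gamma_1)} u = \e^{-\gamma_1 t}\,\e^{tA_n^{*}} u \), multiplying by \( \e^{\gamma_1 t} \) yields~\eqref{eq:l2-limit} with the same uniformity.
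Finally, if \( \phi \in L_0^2(S) \), then \( \phi \) lies in the closure of \( D(\fa) \), so Theorem~\ref{thm:ouhabaz} gives convergence also at \( t = 0 \), uniformly on compact subsets of \( [0,+\infty) \).
\end{proof}
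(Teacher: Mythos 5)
Your proof is correct and follows essentially the same route as the paper: apply Ouhabaz's theorem (Theorem~\ref{thm:ouhabaz}) to the shifted forms, identify the limit form's domain as \( L_0^2(S) \) via the blow-up of \( \fb_n \), recognize the associated operator as \( Q \) minus the shift, and cancel the shift — these steps appear in the paper as Lemmas~\ref{lem:angamma-properties}, \ref{lem:ag-dom}, \ref{lem:c-associated} and Corollary~\ref{cor:ag-gen}. You are in fact slightly more careful than the paper in one respect: you note explicitly that the shift constant \( \gamma = c^2C^2/(\kappa_n\sigma_{\min}) \) from Proposition~\ref{prop:a-closed-sectorial} depends on \( n \) and must be replaced by the uniform choice \( \gamma_1 = c^2C^2/(\kappa_1\sigma_{\min}) \), a point the paper leaves implicit.
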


For \( n \in \N \) let \( \fa_n \) be the form in \( L^2(S) \) defined
by~\eqref{eq:form-a}.
Fix \( \gamma > 0 \) as in Proposition~\ref{prop:a-closed-sectorial} and define
\[
\fa_n^{\gamma} \coloneqq \fa_n + \gamma = \fb_n + \fc + \gamma, \qquad n \in \N
\]
with domain
\[
D(\fa_n^{\gamma}) \coloneqq D(\fa_n) = H^1(S).
\]

\begin{lemma}
\label{lem:angamma-properties}
The sequence \( \seq{\fa_n^{\gamma}}_{n \in \N} \) consists of accretive, closed
and uniformly sectorial forms.
Moreover,
\[
\re \fa_n^{\gamma}(u) \leq \re \fa_{n+1}^{\gamma}(u),
\]
and
\[
\im \fa_n^{\gamma}(u) = \im \fc(u)
\]
for all \( n \in \N \), \( u \in H^1(S) \).
\end{lemma}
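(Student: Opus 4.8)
The plan is to verify the three stated properties directly from the decomposition $\fa_n^{\gamma} = \fb_n + \fc + \gamma$, using the estimates already established in Lemma~\ref{lem:bn} and Proposition~\ref{prop:a-closed-sectorial}. First I would record, using formula~\eqref{eq:bnu}, that
\[
\re \fa_n^{\gamma}(u) = \fb_n(u) + \re \fc(u) + \gamma \norm{ u }_{L^2(S)}^2
= \kappa_n \norm{ \sqrt{\sigma} u' }_{L^2(S)}^2 + \re \fc(u) + \gamma \norm{ u }_{L^2(S)}^2,
\]
and that $\im \fa_n^{\gamma}(u) = \im \fb_n(u) + \im \fc(u) + \gamma \im \scl{u,u}_{L^2(S)} = \im \fc(u)$, since $\fb_n(u)$ and $\scl{u,u}_{L^2(S)}$ are real. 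This gives the last assertion of the lemma immediately, and in particular shows that the imaginary part is independent of $n$, which is hypothesis~(ii) of Theorem~\ref{thm:ouhabaz}.

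Next I would handle accretivity and the monotonicity of the real part. Accretivity: by~\eqref{eq:c-estimate} we have $\abs{\re \fc(u)} \leq \frac12 \fb_n(u) + \frac{\gamma}{2}\norm{u}_{L^2(S)}^2$, so
\[
\re \fa_n^{\gamma}(u) \geq \fb_n(u) - \tfrac12 \fb_n(u) - \tfrac{\gamma}{2}\norm{u}_{L^2(S)}^2 + \gamma \norm{u}_{L^2(S)}^2 = \tfrac12 \fb_n(u) + \tfrac{\gamma}{2}\norm{u}_{L^2(S)}^2 \geq 0.
\]
For closedness: each $\fa_n^{\gamma}$ is closed because $\fa_n$ is closed by Proposition~\ref{prop:a-closed-sectorial} and adding the bounded form $\gamma \scl{\cdot,\cdot}_{L^2(S)}$ preserves closedness (this is the same relatively-bounded-perturbation argument, or simply that the form norm changes by an equivalent one). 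For the monotonicity $\re \fa_n^{\gamma}(u) \leq \re \fa_{n+1}^{\gamma}(u)$, note that the only $n$-dependent term in $\re \fa_n^{\gamma}(u)$ is $\fb_n(u) = \kappa_n \norm{\sqrt{\sigma}\,u'}_{L^2(S)}^2$, and since $\seq{\kappa_n}_{n \in \N}$ is nondecreasing we get $\fb_n(u) \leq \fb_{n+1}(u)$, hence $\re \fa_n^{\gamma}(u) \leq \re \fa_{n+1}^{\gamma}(u)$ because $\re \fc(u)$ and $\gamma \norm{u}_{L^2(S)}^2$ do not depend on $n$.

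Finally, uniform sectoriality is the one point requiring a little care, since the sectoriality constant $\gamma$ produced in Proposition~\ref{prop:a-closed-sectorial} was defined as $\gamma = c^2 C^2/(\kappa_n \sigma_{\min})$ and thus a priori depends on $n$. However, because $\seq{\kappa_n}_{n \in \N}$ is nondecreasing, we may simply fix $\gamma \coloneqq c^2 C^2/(\kappa_1 \sigma_{\min})$ once and for all; then for every $n$ the estimate~\eqref{eq:c-estimate} holds with this $\gamma$ (the right-hand side only increases), so~\eqref{eq:a-sectorial} gives $\abs{\im \fa_n^{\gamma}(u)} = \abs{\im \fa_n(u)} \leq \re \fa_n(u) + \gamma \norm{u}_{L^2(S)}^2 = \re \fa_n^{\gamma}(u)$ for all $n \in \N$ and $u \in H^1(S)$, i.e.~\eqref{eq:sectorial} with $M = 1$ uniformly in $n$. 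I expect this bookkeeping about the $n$-independence of $\gamma$ to be the main (indeed the only) obstacle; everything else is a direct reading-off from the already-proven estimates.
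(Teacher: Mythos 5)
Your proof is correct and follows essentially the same route as the paper's (much terser) argument: everything is read off from the decomposition \( \fa_n^{\gamma} = \fb_n + \fc + \gamma \), the reality of \( \fb_n(u) \), and the monotonicity of \( \seq{\kappa_n}_{n \in \N} \). Your explicit fix of \( \gamma \coloneqq c^2C^2/(\kappa_1 \sigma_{\min}) \) to make the sectoriality constant independent of \( n \) is exactly the right bookkeeping, and it addresses a point the paper's proof (which merely cites Proposition~\ref{prop:a-closed-sectorial}, where \( \gamma \) a priori depends on the fixed \( n \)) leaves implicit.
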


\begin{proof}
The first part is a~consequence of Proposition~\ref{prop:a-closed-sectorial}.
For the second observe that
\begin{equation}
\label{eq:agnu}
\fa_n^{\gamma}(u) = \fb_n(u) + \fc(u) + \gamma \norm{ u }_{L^2(S)}^2, \qquad n
\in \N,\ u \in H^1(S).
\end{equation}
The claim follows from the fact that \( \kappa_n \leq \kappa_{n+1} \) and
\( \fb_n(u) = \re \fb_n(u) \) for all \( n \in \N \) and \( u \in H^1(S) \).
\end{proof}

Let \( \fa^{\gamma} \) be the form in \( H \) defined by
\begin{equation}
\label{eq:limit-form}
\fa^{\gamma}(u,v) \coloneqq \lim_{n \to +\infty} \fa_n^{\gamma}(u,v)
\end{equation}
with domain
\[
D(\fa^{\gamma}) \coloneqq \set{ u \in H^1(S)\colon \sup_{n \in \N}
  \fa_n^{\gamma}(u) < +\infty }.
\]
This definition makes sense because the limit of \( \fa_n^{\gamma}(u) \) as
\( n \to +\infty \) exists, and we may define \( \fa^{\gamma} \) by the
polarization equality.

\begin{lemma}
\label{lem:ag-dom}
We have
\[
D(\fa^{\gamma}) = L_0^2(S)
\]
and
\begin{equation}
\label{eq:ag-def}
\fa^{\gamma}(u,v) = \fc(u,v) + \gamma
\scl{u,v}_{L^2(S)}, \qquad u,v \in L_0^2(S).
\end{equation}
\end{lemma}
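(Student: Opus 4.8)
The plan is to prove the two claims of Lemma~\ref{lem:ag-dom} separately: first identify the domain $D(\fa^{\gamma})$ as $L_0^2(S)$, and then evaluate the limiting form on that domain. Recall from~\eqref{eq:agnu} that $\fa_n^{\gamma}(u) = \fb_n(u) + \fc(u) + \gamma \norm{u}_{L^2(S)}^2$ and $\fb_n(u) = \kappa_n \norm{\sqrt{\sigma}\, u'}_{L^2(S)}^2$ is the only $n$-dependent piece, with $\kappa_n \nearrow +\infty$.

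First I would show $D(\fa^{\gamma}) \subseteq L_0^2(S)$. Suppose $u \in H^1(S)$ satisfies $\sup_{n} \fa_n^{\gamma}(u) < +\infty$. Taking real parts and using $\re \fa_n^{\gamma}(u) \geq \tfrac12 \fb_n(u)$ (which follows from~\eqref{eq:c-estimate}, exactly as in the proof of Proposition~\ref{prop:a-closed-sectorial}), we get $\sup_n \fb_n(u) < +\infty$. Since $\fb_n(u) = \kappa_n \norm{\sqrt{\sigma}\, u'}_{L^2(S)}^2$ and $\kappa_n \to +\infty$, this forces $\norm{\sqrt{\sigma}\, u'}_{L^2(S)} = 0$; as each $\sigma_i > 0$, we conclude $u' = 0$ on $S$, i.e. $u$ is constant on each edge, so $u \in L_0^2(S)$. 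Conversely, if $u \in L_0^2(S)$, then $u' = 0$, so $\fb_n(u) = 0$ for every $n$, and hence $\fa_n^{\gamma}(u) = \fc(u) + \gamma \norm{u}_{L^2(S)}^2$ is independent of $n$; in particular the supremum over $n$ is finite and $u \in D(\fa^{\gamma})$. This gives $D(\fa^{\gamma}) = L_0^2(S)$.

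For the formula~\eqref{eq:ag-def}, take $u, v \in L_0^2(S)$. Since $u', v' = 0$, we have $\fb_n(u,v) = \kappa_n \scl{\sigma u', v'}_{L^2(S)} = 0$ for every $n$, so $\fa_n^{\gamma}(u,v) = \fc(u,v) + \gamma \scl{u,v}_{L^2(S)}$ for all $n$; passing to the limit in~\eqref{eq:limit-form} (the sequence is constant) yields $\fa^{\gamma}(u,v) = \fc(u,v) + \gamma \scl{u,v}_{L^2(S)}$. Strictly speaking, \eqref{eq:limit-form} defines $\fa^{\gamma}$ via polarization from the diagonal values $\fa^{\gamma}(u) = \lim_n \fa_n^{\gamma}(u)$, but since $\fa_n^{\gamma}(u) = \fc(u) + \gamma\norm{u}^2_{L^2(S)}$ is already $n$-independent on $L_0^2(S)$, polarization of this constant diagonal reproduces $\fc(u,v) + \gamma\scl{u,v}_{L^2(S)}$, so the two descriptions agree.

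The only mildly delicate point is the inclusion $D(\fa^{\gamma}) \subseteq L_0^2(S)$: one must be careful to extract a uniform lower bound $\re \fa_n^{\gamma}(u) \geq \tfrac12 \fb_n(u)$ valid for all $n$, and here it is essential that $\gamma$ was chosen in Proposition~\ref{prop:a-closed-sectorial} so that the $\fb_n$-bound of $\fc$ is $1/2$ uniformly in $n$ — this is exactly why the $+\gamma$ shift was introduced. Everything else is a direct computation using $u' = 0$.
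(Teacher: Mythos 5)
Your proof is correct and follows essentially the same route as the paper: the only $n$-dependent term in $\fa_n^{\gamma}(u)$ is $\fb_n(u)=\kappa_n\norm{\sqrt{\sigma}\,u'}_{L^2(S)}^2$, which stays bounded as $\kappa_n\to+\infty$ precisely when $u'=0$, and on $L_0^2(S)$ the forms are constant in $n$ so the limit is $\fc+\gamma\scl{\cdot,\cdot}_{L^2(S)}$. Your detour through the uniform lower bound $\re\fa_n^{\gamma}(u)\geq\tfrac12\fb_n(u)$ is a slightly longer way to see that $\sup_n\fa_n^{\gamma}(u)<+\infty$ is equivalent to $\sup_n\fb_n(u)<+\infty$ (the paper just notes that $\fa_n^{\gamma}(u)-\fb_n(u)$ is independent of $n$), but it is valid and your remark about needing the relative bound uniformly in $n$ is well taken.
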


\begin{proof}
Let \( u \in H^1(S) \) and observe that \( u \in D(\fa^{\gamma}) \) if and only
if
\begin{equation*}
\sup_{n \in \N} \fb_n(u) < +\infty.
\end{equation*}
By~\eqref{eq:bnu}, the last condition holds if and only if \( u' = 0 \) in
\( L^2(S) \), since \( \kappa_n \to +\infty \) as \( n \to +\infty \).
This completes the proof, because \( u' = 0 \) is equivalent to
\( u \in L_0^2(S) \), and the formula~\eqref{eq:ag-def} follows now immediately
from~\eqref{eq:limit-form} and~\eqref{eq:agnu}.
\end{proof}

Let \( \fc_0 \) be the restriction of \( \fc \) to \( L_0^2(S) \), that is the
form in \( L_0^2(S) \) given by
\[
\fc_0(u,v) \coloneqq \fc(u,v), \qquad u,v \in L_0^2(S).
\]

\begin{lemma}
\label{lem:c-associated}
The operator associated with \( \fc_0 \) equals \( Q \).
\end{lemma}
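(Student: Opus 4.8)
The plan is to compute directly from the definition of the associated operator. Let $u \in L_0^2(S)$ and identify $u$ with the vector $(u_i)_{i \in \cn} \in \C^N$, where $u_i$ is the constant value of $u$ on the edge $E_i$. First I would observe that for $u, v \in L_0^2(S)$ the scalar product $\scl{u,v}_{L^2(S)}$ equals $\sum_{i \in \cn} d_i u_i \conj{v_i}$, and that since $u$ is constant on each edge we have $u(L_i) = u(R_i) = u_i$, so the form $\fc_0$ takes the explicit shape
\[
\fc_0(u,v) = \sum_{i \in \cn} \sigma_i (F_{L,i} u - F_{R,i} u) \conj{v_i}.
\]
Using the definition of $F_{L,i}$ and $F_{R,i}$ from~\eqref{eq:transition-conditions-dual1}--\eqref{eq:transition-conditions-dual2} together with the fact that $u$ is constant on edges, this becomes, for $\phi = u$ in the notation of~\eqref{eq:sigmaPK},
\[
\fc_0(u,v) = -\sum_{i \in \cn} d_i \lrp[\Big]{ d_i^{-1} \sum_{j \neq i} \sigma_j (l_{ji}+r_{ji}) u_j - \sigma_i d_i^{-1}(l_i+r_i) u_i } \conj{v_i} = -\sum_{i \in \cn} d_i (Q u)_i \conj{v_i},
\]
where $Q$ is the matrix operator defined by~\eqref{eq:Q-formula}; here I used the first equality of~\eqref{eq:sigmaPK}, whose derivation (the bookkeeping of $I_i^L$, $I_i^R$ versus $j \neq i$, and the prime on the sums) is exactly what is carried out just before~\eqref{eq:Q-formula} and may be quoted.

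Next I would note that the right-hand side above equals $-\scl{Qu, v}_{L^2(S)}$, again because $Qu \in L_0^2(S)$ (the matrix $Q$ maps constants-on-edges to constants-on-edges) and its value on $E_i$ is $(Qu)_i$. Since $D(\fc_0) = L_0^2(S)$ is all of the Hilbert space $L_0^2(S)$, for every $u \in L_0^2(S)$ the element $f := Qu$ lies in $D(\fc_0)$ and satisfies $\fc_0(u,v) = -\scl{f,v}_{L^2(S)}$ for all $v \in L_0^2(S)$. By the definition of the associated operator this shows $u \in D(B_0)$ and $B_0 u = Qu$, where $B_0$ denotes the operator associated with $\fc_0$; conversely, since $D(B_0) \subseteq D(\fc_0) = L_0^2(S)$ and we have just exhibited the required $f$ for every $u \in L_0^2(S)$, in fact $D(B_0) = L_0^2(S) = D(Q)$. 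Hence $B_0 = Q$.

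The only mild obstacle is the indexing bookkeeping: one must be careful that $F_{L,i}$ and $F_{R,i}$ are evaluated on a function that is genuinely constant on each edge (so that $\phi(L_j) = \phi(R_j) = \phi_j$), and that the passage from the sum over $j \in I_i^L \cup I_i^R$ with the prime to the sum over all $j \neq i$ of $\sigma_j(l_{ji}+r_{ji})\phi_j$ is precisely~\eqref{eq:sigmaPK}. Once that identification is in place the computation is a one-line matching against~\eqref{eq:Q-formula}, and the sign conventions ($\fc$ enters $\scl{A_n^{*}u,v} = -\fb_n(u,v) - \fc(u,v)$ with a minus, and the associated operator is defined via $\fa(u,v) = -\scl{f,v}$) cancel to give $B_0 = +Q$ rather than $-Q$.
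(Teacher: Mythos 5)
Your proof is correct and follows essentially the same route as the paper's: evaluate \( \fc_0 \) on functions constant on each edge, reduce \( \sigma_i(F_{R,i}u - F_{L,i}u) \) to \( d_i (Qu)_i \) via the computation already carried out in~\eqref{eq:sigmaPK}, and read off \( \fc_0(u,v) = -\scl{Qu,v}_{L^2(S)} \), which by the definition of the associated operator yields \( B_0 = Q \). Your additional remarks --- that \( D(B_0) \) is all of \( L_0^2(S) \), and that the sign in the intermediate step is the one dictated by the definition of \( \fc \) (namely \( \fc_0(u,v)=\sum_{i}\sigma_i(F_{L,i}u - F_{R,i}u)\conj{v_i} \)) --- are both correct and simply make explicit what the paper leaves implicit.
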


\begin{proof}
Let \( u,v \in L_0^2(S) \).
Then, calculating as in~\eqref{eq:sigmaPK},
\[
\sigma_i (F_{R,i} u - F_{L,i} u) = \sum_{j \neq i} \sigma_j (l_{ji} + r_{ji})
u_j - \sigma_i (l_i + r_i) u_i, \qquad i \in \cn,
\]
where \( u_j \) is the value of \( u \) on the edge \( E_j \).
Therefore, see~\eqref{eq:Q-formula},
\[
\fc_0(u,v) = \sum_{i \in \cn} \sigma_i (F_{R,i} u - F_{L,i} u) \conj{v_i} = -
\scl{Qu,v}_{L^2(S)},
\]
and the claim follows.
\end{proof}

\begin{corollary}
\label{cor:ag-gen}
The operator associated with \( \fa^{\gamma} \), as a~form in \( L_0^2(S) \),
equals \( Q - \gamma \).
\end{corollary}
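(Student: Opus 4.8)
The plan is to combine Lemma~\ref{lem:ag-dom}, which identifies $D(\fa^\gamma) = L_0^2(S)$ together with the explicit formula $\fa^\gamma(u,v) = \fc(u,v) + \gamma\scl{u,v}_{L^2(S)}$ for $u,v \in L_0^2(S)$, with Lemma~\ref{lem:c-associated}, which shows that the operator associated with $\fc_0$ (the restriction of $\fc$ to $L_0^2(S)$) is exactly $Q$. Concretely, by Lemma~\ref{lem:ag-dom} the form $\fa^\gamma$, viewed as a form in the Hilbert space $L_0^2(S)$, coincides with $\fc_0 + \gamma$; that is, $\fa^\gamma(u,v) = \fc_0(u,v) + \gamma\scl{u,v}_{L^2(S)}$ for all $u,v \in L_0^2(S)$.

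Next I would invoke the standard fact that adding $\gamma\scl{\cdot,\cdot}$ to a form shifts the associated operator by $-\gamma$: if $C_0$ is the operator associated with $\fc_0$ in $L_0^2(S)$, then the operator associated with $\fc_0 + \gamma$ is $C_0 - \gamma$. This is immediate from the definition of the associated operator recalled in Section~\ref{sec:sesquilinear-forms}, since $(\fc_0+\gamma)(u,v) = -\scl{f,v}$ for all $v \in L_0^2(S)$ is equivalent to $\fc_0(u,v) = -\scl{f + \gamma u, v}$, i.e.\ to $C_0 u = f + \gamma u$, hence $f = (C_0 - \gamma)u$; in particular the domains coincide and since $L_0^2(S)$ is finite-dimensional there is nothing further to check about closedness or density. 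By Lemma~\ref{lem:c-associated}, $C_0 = Q$, so the operator associated with $\fa^\gamma$ in $L_0^2(S)$ is $Q - \gamma$, which is the assertion of the corollary.

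There is essentially no obstacle here: the corollary is a direct bookkeeping consequence of the two preceding lemmas plus the elementary additivity of the form--operator correspondence under bounded perturbations by multiples of the inner product. The only thing worth stating carefully is that $\fa^\gamma$ must be regarded as a form \emph{in} $L_0^2(S)$ (its closure, not in the ambient $L^2(S)$), which is precisely the setting in which the degenerate-semigroup construction of Ouhabaz's theorem operates; this is already the viewpoint fixed by Lemma~\ref{lem:ag-dom}. The proof is therefore short and I would write it in just a few lines.

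\begin{proof}
By Lemma~\ref{lem:ag-dom} we have $D(\fa^{\gamma}) = L_0^2(S)$ and
\[
\fa^{\gamma}(u,v) = \fc(u,v) + \gamma \scl{u,v}_{L^2(S)} = \fc_0(u,v) + \gamma
\scl{u,v}_{L^2(S)}, \qquad u,v \in L_0^2(S).
\]
Thus, regarded as a~form in the Hilbert space $L_0^2(S)$, the form $\fa^{\gamma}$
equals $\fc_0 + \gamma$.
If $C_0$ denotes the operator associated with $\fc_0$ in $L_0^2(S)$, then for
$u \in L_0^2(S)$ and $f \in L_0^2(S)$ the condition
$\fa^{\gamma}(u,v) = -\scl{f,v}_{L^2(S)}$ for all $v \in L_0^2(S)$ is equivalent
to $\fc_0(u,v) = -\scl{f + \gamma u, v}_{L^2(S)}$ for all $v \in L_0^2(S)$, that
is, to $u \in D(C_0)$ and $C_0 u = f + \gamma u$.
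Hence the operator associated with $\fa^{\gamma}$ in $L_0^2(S)$ has domain
$D(C_0)$ and equals $C_0 - \gamma$.
By Lemma~\ref{lem:c-associated}, $C_0 = Q$, so the operator associated with
$\fa^{\gamma}$ equals $Q - \gamma$.
\end{proof}
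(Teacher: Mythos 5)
Your proof is correct and follows the same route as the paper, which simply cites \eqref{eq:ag-def} and Lemma~\ref{lem:c-associated}; you have merely spelled out the elementary fact that adding $\gamma\scl{\cdot,\cdot}_{L^2(S)}$ to a form shifts the associated operator by $-\gamma$, which the paper leaves implicit.
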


\begin{proof}
The claim is a~consequence of~\eqref{eq:ag-def} and
Lemma~\ref{lem:c-associated}.
\end{proof}

Finally, we are ready to prove our convergence result in \( L^2(S) \).

\begin{proof}[Proof of Theorem~\textup{\ref{thm:agn-asymptotics}}]
By Lemma~\ref{lem:angamma-properties} the assumptions of
Theorem~\ref{thm:ouhabaz} hold for the sequence
\( \seq{\fa_n^{\gamma}}_{n \in \N} \).
Hence,
\[
\lim_{n \to +\infty} \e^{-t\, \fa_n^{\gamma}} u = \e^{-t\, \fa^{\gamma}} u, \qquad
u \in L_0^2(S).
\]
By Proposition~\ref{prop:an-An}, \( A_n^{*} - \gamma \) is associated with
\( \fa_n^{\gamma} \), and hence by Corollary~\ref{cor:ag-gen} we can rewrite the
above relation in the form
\[
\lim_{n \to +\infty} \e^{t(A_n^{*} - \gamma)} u = \e^{t(Q - \gamma)} P u, \qquad
u \in L_0^2(S),
\]
which is equivalent to~\eqref{eq:l2-limit}.
\end{proof}

\bibliography{diffusions-on-graphs}
\bibliographystyle{amsplain}

\end{document}